\documentclass[11pt,reqno,a4paper]{amsart}

\usepackage[T1]{fontenc}
\usepackage[utf8]{inputenc}

\usepackage{libertinus}
\usepackage{microtype}

\usepackage{geometry}

\usepackage{xcolor}

\usepackage{amsmath,amssymb,amsthm,amsfonts}
\usepackage{mathtools}
\usepackage{mathrsfs}

\usepackage{graphicx}

\usepackage[
colorlinks=true,
linkcolor=blue,
citecolor=blue,
urlcolor=blue
]{hyperref}

\usepackage[nameinlink,noabbrev]{cleveref}

\usepackage{titlesec}

\titleformat{\section}
  {\normalfont\large\bfseries\centering}
  {\thesection.}
  {0.75em}
  {}

\titlespacing*{\section}
  {0pt}
  {3.2ex plus 1ex minus .2ex}
  {1.6ex plus .3ex}

\titleformat{\subsection}
  {\normalfont\normalsize\bfseries}
  {\thesubsection.}
  {0.75em}
  {}

\titlespacing*{\subsection}
  {0pt}
  {2.4ex plus .8ex minus .2ex}
  {1.0ex plus .2ex}

\titleformat{\subsubsection}
  {\normalfont\normalsize\bfseries\itshape}
  {\thesubsubsection.}
  {0.75em}
  {}

\titlespacing*{\subsubsection}
  {0pt}
  {2.0ex plus .6ex minus .2ex}
  {0.7ex plus .2ex}

\numberwithin{equation}{section}

\theoremstyle{plain}

\newtheorem{theorem}{Theorem}[section]
\newtheorem{lemma}[theorem]{Lemma}
\newtheorem{proposition}[theorem]{Proposition}
\newtheorem{corollary}[theorem]{Corollary}
\newtheorem*{conjecture}{Conjecture}

\newtheorem{maintheorem}{Theorem}

\theoremstyle{definition}

\theoremstyle{remark}

\newtheorem{remark}[theorem]{Remark}

\usepackage{etoolbox}

\AtBeginEnvironment{proof}{\vspace{6pt}}

\usepackage{enumitem}

\setlist{
itemsep=2pt,
topsep=4pt
}

\usepackage[toc,page]{appendix}

\newcommand{\SL}{\operatorname{SL}}

\newcommand{\SO}{\operatorname{SO}}

\newcommand{\GL}{\operatorname{GL}}

\newcommand{\rBC}{\operatorname{BC}} 

\newcommand{\cO}{\mathcal{O}}

\newcommand{\bC}{\mathbb{C}}

\newcommand{\bR}{\mathbb{R}}

\newcommand{\bZ}{\mathbb{Z}}

\newcommand{\goa}{\mathfrak{a}}

\def\acts{\curvearrowright}
\newcommand\subsetsim{\mathrel{%
\ooalign{\raise0.2ex\hbox{$\subset$}\cr\hidewidth\raise-0.8ex\hbox{\scalebox{0.9}{$\sim$}}\hidewidth\cr}}}

\newcommand{\rA}{\mathrm{A}}
\newcommand{\rB}{\mathrm{B}}
\newcommand{\rC}{\mathrm{C}}
\newcommand{\rD}{\mathrm{D}}
\newcommand{\rE}{\mathrm{E}}
\newcommand{\rF}{\mathrm{F}}
\newcommand{\rG}{\mathrm{G}}

\DeclareMathOperator{\Hom}{Hom}

\DeclareMathOperator{\Ind}{Ind}

\usepackage{comment}

\begin{document}

\title[Fourier Positivity for Spherical Functions I]{Fourier Positivity for Spherical Functions I: \\ Split Tori and spherical Principal Series}

\author[M. Bj\"orklund]{Michael Bj\"orklund}
\address{Department of Mathematical Sciences, 
Chalmers University of Technology and University of Gothenburg, 
SE-412 96 Gothenburg, Sweden}
\email{micbjo@chalmers.se}

\author[D. Liu]{Dongwen Liu}
\address{School of Mathematical Sciences,  Zhejiang University, 866 Yuhangtang Road, Hangzhou 310058, P.R. China}
\email{maliu@zju.edu.cn}
 
\author[J. Yu]{Jun Yu}
\address{School of Mathematical Sciences and Beijing International Center for Mathematical Research, Peking University, No. 5 Yiheyuan Road, Beijing 100871, P.R. China}
\email{junyu@bicmr.pku.edu.cn}

\author[G. Zhang]{Genkai Zhang}
\address{Department of Mathematical Sciences, 
Chalmers University of Technology and University of Gothenburg, 
SE-412 96 Gothenburg, Sweden}
\email{genkai@chalmers.se}

\subjclass[2020]{22E50, 43A80}
\keywords{Zonal spherical function, Harish-Chandra transform, Rankin-Selberg integral}

\begin{abstract}
We prove Fourier positivity for spherical functions on a semisimple linear algebraic group $G$ over a local field restricted to its split tori $A$
for unitary principal series parameters of $G$. For \(\SL_n(F)\), where \(F\) is a local field, we obtain an explicit recursive formula for the Fourier transform on the diagonal split torus in terms of local Rankin--Selberg factors for \(\GL_n\times\GL_{n-1}\), together with uniform exponential lower bounds in the spectral parameters.

The main input is a Plancherel expansion for the restriction of a \(\GL_n(F)\)-spherical function to \(\GL_{n-1}(F)\). Its coefficients are spherical periods computed by Rankin--Selberg theory. Positivity of the Fourier transform for general semisimple groups with unitary principal series parameters is obtained by reduction to full-rank subgroups of type \(\rA\). The results are motivated by variance non-vanishing problems for mixing abelian actions on homogeneous spaces.
\end{abstract}

\maketitle


\section{Introduction}

\subsection*{Restrictions to abelian subgroups}

Let \(G\) be a locally compact second countable topological group and let
\((\pi,V)\) be an irreducible unitary representation of \(G\). A fundamental
problem in representation theory is to understand the restriction of \(\pi\) to
a closed subgroup \(H<G\).

Suppose that \(V\) is cyclic, generated by a unit vector \(v_o\). The associated
matrix coefficient
\[
\psi_{v_o}(g)=\langle \pi(g)v_o,v_o\rangle
\]
is a continuous positive definite function on \(G\). Thus the restriction
problem is reflected in the restriction 
\(\psi_{v_o}|_H\)
 to \(H\).

When \(H\) is abelian, Bochner's theorem identifies \(\psi_{v_o}|_H\)
with the
Fourier transform of a non-negative measure on
the dual group \(\widehat H\). If
\(\psi_{v_o}|_H\in L^1(H)\), this measure is absolutely continuous, with
continuous non-negative density given by the Fourier transform of
\(\psi_{v_o}|_H\). It is therefore natural to ask when this Fourier transform is
strictly positive.

The paper concerns this question in the spherical setting. Thus, when \(G\) is
a semisimple group and \(K<G\) is a maximal compact subgroup, we shall focus on
bi-\(K\)-invariant positive definite functions. The precise quotient convention
needed for restrictions to abelian subgroups will be introduced below.

\subsection*{Motivation from homogeneous dynamics}

One source of the problem is the non-vanishing of variances in limit theorems
for abelian actions. Let \(H\) be an abelian lcsc group equipped with a proper
\(H\)-invariant metric, and suppose that Haar measure of metric balls \(B_R\)
grows polynomially in \(R\). Let \(H\acts (X,\mu)\) be an ergodic
probability-preserving action which is exponentially mixing of all orders, in
the sense of \cite{BEG20}. Then the arguments in \cite{BG20} imply that, for
suitable observables \(f\), the normalized averages
\[
\frac{1}{m_H(B_R)^{1/2}}
\left(
\int_{B_R} f(h.x)\,dm_H(h)
-
m_H(B_R)\int_X f\,d\mu
\right)
\]
converge in distribution to a normal law with variance
\[
\sigma_f^2
=
\int_H \psi_f(h)\,dh,
\qquad
\psi_f(h)=\int_X f(h.x)f(x)\,d\mu(x).
\]
The function \(\psi_f\) is positive definite on \(H\), and the mixing
assumptions imply that \(\psi_f\in L^1(H)\). Thus $\smash{\sigma_f^2=\widehat{\psi}_f(1)}$.
If \(\sigma_f^2=0\), the limiting distribution degenerates to the point mass at
\(0\). Thus, for such limit theorems to have non-trivial content, it is crucial
to establish positive variance.

In many Diophantine applications, one takes \(\smash{X=G/\Gamma}\), where \(G\)
is the group of \(F\)-points of a connected semisimple linear algebraic group
over a local field \(F\), and \(\Gamma<G\) is a lattice, while \(H\) is
contained in a split torus of \(G\). For local fields \(F\) of characteristic
zero, exponential mixing of all orders for such actions and smooth compactly
supported observables was established by the first author, Einsiedler and
Gorodnik in \cite{BEG20}. Proving positivity of the limiting variance is often
a separate problem, see e.g. \cite{BG19, BG23}. In lattice-counting applications, the observables are
typically special counting functions known as Siegel transforms, and positivity
is often proved using identities such as Siegel's mean value theorem and
Rogers' second moment formula
\cite{Rogers}.

The point of view in this paper gives a different route in the spherical case.
The function \(\psi_f\) is a matrix coefficient of the \(G\)-representation on
\(L^2(G/\Gamma)\). If \(K<G\) is a maximal compact subgroup and \(f\) is
\(K\)-invariant, then \(\psi_f\) is bi-\(K\)-invariant. Therefore, whenever
\(\psi_f|_H\in L^1(H)\), strict positivity of the Fourier transform of
\(\psi_f|_H\) at the trivial character implies
\[
\sigma_f^2=\widehat{\psi}_f(1)>0.
\]
This gives a representation-theoretic criterion for positive variance which is
not tied to special formulas for Siegel transforms, and applies to arbitrary
\(K\)-invariant observables whose matrix coefficients decay sufficiently fast.

There is also a related interpretation in terms of twisted cohomological
equations. Suppose, for instance, that \(G\) is a real Lie group and that
\(H=\{\exp(tX):t\in\bR\}\) is a non-compact one-parameter subgroup, with
\(X\in\mathfrak g\). If \(
\eta\in i\mathbb R\) is a unitary character of $H$, identified as
\(\eta(\exp(tX))=e^{\eta t}, t\in \mathbb R\), then vanishing of
the Fourier transform at \(\eta\) is closely connected to solvability of the
twisted equation
\[
v_o=(d\pi(X)-\eta)v.
\]
More precisely, assuming sufficient decay to justify integration by parts,
solvability of the twisted equation implies \(\smash{\widehat{\psi}_{v_o}\big|_H(\eta)=0}\), and conversely the
vanishing of \(\smash{\widehat{\psi}_{v_o}\big|_H(\eta)}\) implies solvability under the
moment condition appearing in \cite[Lemma~2.2]{ClB01}. Thus strict positivity
of these Fourier transforms gives a criterion excluding twisted coboundaries.
This connects the present question to the study of cohomological equations for
hyperbolic flows and higher-rank abelian actions; see, for example,
\cite{dlLMM86,KS94}.
\subsection*{The conjectural picture}

Let \(G\) be a semisimple linear group over a local field \(F\), let \(K<G\) be
a maximal compact subgroup, and let \(H<G\) be a closed abelian subgroup. If
\(\psi\) is bi-\(K\)-invariant, then its restriction to \(H\) is constant on
cosets of \(H\cap K\). Thus the natural abelian group on which to take the
Fourier transform is
\[
H_K:=H/(H\cap K).
\]
When \(\psi|_H\in L^1(H_K)\), we shall say that the pair \((H,\psi)\) has
\emph{Fourier positivity} if the Fourier transform of the corresponding
function on \(H_K\) is strictly positive on \(\widehat H_K\).

Motivated
by the above questions and after 
concrete computations
for various
cases, we arrived at the following working
conjecture.

\begin{conjecture}
Let \(H<G\) be a closed non-compact abelian
subgroup. Suppose that \(\psi\) is a continuous bi-\(K\)-invariant positive
definite function on \(G\) such that \(\psi|_H\in L^1(H_K)\). Then the pair
\((H,\psi)\) has Fourier positivity.
\end{conjecture}

The present paper concerns the first natural case 
of the conjecture: the
split
torus direction 
in $G$. If \(H = A<G\) is a maximal \(F\)-split torus
invariant under the Cartan involution, the relevant
abelian group is
\[
\mathsf A=A/(A\cap K).
\]
In the Archimedean case this quotient is naturally identified with \(A^\circ\),
while in the non-Archimedean case it is the usual cocharacter-lattice direction
of \(A\). We prove Fourier positivity for restrictions of spherical 
principal-series functions to this group \(\mathsf A\). Other parts of the
unitary spherical spectrum, including complementary series representations, will
be treated in subsequent papers in this series.

The non-compactness assumption is necessary. The quotient convention removes
the trivial example \(H<K\), since then \(H_K\) is the trivial group. But compact
subgroups not contained in \(K\) still give genuine counterexamples. For
example, let
\[
G=\SL_2(\bR),\qquad K=\SO(2),
\]
and let \(H=aKa^{-1}\) with \(a\notin N_G(K)\). Then \(H\) is compact abelian,
and for a generic element \(a\) the intersection \(H\cap K\) is finite. Hence
\[
H_K=H/(H\cap K)
\]
is again a circle. The constant function \(\psi\equiv 1\) is continuous,
positive definite, and bi-\(K\)-invariant, but its Fourier transform on \(H_K\)
is supported only at the trivial character. Thus strict positivity fails on
\(\widehat H_K\).

The assumption of full bi-\(K\)-invariance is also essential, and cannot be
weakened to \(K\)-finiteness. Indeed, let \(G\) be a real semisimple Lie group,
and let
\[
H=\{\exp(tX):t\in\bR\}
\]
be a non-compact one-parameter subgroup, with \(X\in\mathfrak g\). Let
\((\pi,V)\) be a unitary representation whose relevant \(K\)-finite matrix
coefficients decay sufficiently along \(H\). If \(w\) is a \(K\)-finite vector
and
\[
v_o=d\pi(X)w,
\]
then \(v_o\) is again \(K\)-finite, since the \(K\)-finite vectors form a
\((\mathfrak g,K)\)-module. The positive definite function
\[
\psi(g)=\langle \pi(g)v_o,v_o\rangle
\]
is therefore \(K\)-finite. If
\[
\varphi(t)=\langle \pi(\exp(tX))w,w\rangle,
\]
then \(\psi(\exp(tX))\) is, up to sign, the second derivative of \(\varphi(t)\).
Assuming sufficient decay, integration by parts gives
\[
\widehat{\psi}(1) = \int_H \psi(h)\,dh=0.
\]
Thus strict positivity may fail for \(K\)-finite positive definite functions
unless one assumes full bi-\(K\)-invariance.

The \(L^1\)-assumption is essential as well. For instance, let
\(G=\SL_2(\bC)\) and let \(N\simeq \bC\) be the upper unipotent subgroup. If
\(\phi_\lambda\) is a principal spherical function, then the restriction \(\phi_\lambda|_N\) is
positive definite on \(N\), but it is not integrable. Its Fourier transform can
nevertheless be defined away from the trivial character, and is given, up to a
positive scalar, by
\[
w\mapsto K_{i\lambda}(\|w\|)^2,
\]
where \(K_{i\lambda}\) denotes the \(K\)-Bessel function. Since \(K_{i\lambda}\)
has infinitely many positive real zeros, this transform vanishes on infinitely
many circles in \(\widehat N\simeq \bC\). The computation behind this example is
standard and will be discussed in a sequel devoted to unipotent restrictions,
including the role of the \smash{\(L^1\)}-assumption.

Finally, we note that the spherical Bochner theorem shows that if every non-trivial zonal
spherical function on \(G\) restricts to an \(L^1\)-function on \(H\), then the
conjecture for general bi-\(K\)-invariant positive definite functions reduces
to proving Fourier positivity for the restrictions of the non-trivial zonal
spherical functions themselves.

\subsection*{Notation and statement of results}

Let \(F\) be a local field and let \(G\) denote the group of \(F\)-points of a connected semisimple linear algebraic group defined over \(F\). Fix a maximal compact subgroup \(K<G\), a maximal \(F\)-split torus \(A<G\), and a positive Weyl chamber \(A^+\subset A\) such that
\[
G=K A^+ K .
\]
We also fix an Iwasawa decomposition
\[
G=KAN.
\]
Following the convention introduced above, we write
\[
\mathsf A=A/(A\cap K)
\]
for the split torus direction. In the Archimedean case this quotient is
naturally identified with \(A^\circ=\exp(\goa)\), where \(\goa\) is the Lie
algebra of \(A^\circ\). In the non-Archimedean case, there is no Lie algebra
involved; instead
\[
\goa=X_*(A)\otimes_\mathbb Z\bR,
\]
where \(X_*(A)\) is the cocharacter lattice of \(A\), and
\[
\goa^*=X^*(A)\otimes_\mathbb Z\bR.
\]
We denote by
\[
H_A:\mathsf A\to\goa
\]
the logarithm map in the Archimedean case and the valuation map in the
non-Archimedean case. Let \(W\) be the Weyl group, and let \(\rho\) denote the
half sum of positive roots, counted with multiplicities.

The relevant unitary dual of \(\mathsf A\) is
$\widehat{\mathsf A}_{\mathrm{ur}}
$ defined in
(\ref{hat-A}).
Thus, for \(\psi\in L^1(\mathsf A)\), we write
\[
\widehat{\psi}(\mu)
=
\int_{\mathsf A}
\psi(a)e^{-\mu(H_A(a))}\,da,
\qquad
\mu\in\widehat{\mathsf A}_{\mathrm{ur}}.
\]

For \(\lambda\in\goa_\bC^*\), let \(\phi_\lambda\) denote the Harish--Chandra
spherical function with parameter \(\lambda\). In this paper we consider the
positive definite spherical functions arising from the unitary spherical
principal series, corresponding to parameters 
$\smash{\lambda\in\widehat{\mathsf A}_{\mathrm{ur}}}$. For every such parameter, the restriction
$\phi_\lambda|_{\mathsf A}$
belongs to \(L^1(\mathsf A)\). \\

The first main result is the following positivity theorem.

\begin{maintheorem}
Let \(\lambda\in\widehat{\mathsf A}_{\mathrm{ur}}\), and let
\(\phi_\lambda\) be the corresponding positive definite spherical function on
\(G\). Then the Fourier transform of \(\phi_\lambda|_{\mathsf A}\) is strictly
positive on \(\widehat{\mathsf A}_{\mathrm{ur}}\).
\end{maintheorem}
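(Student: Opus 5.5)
The plan follows the abstract: first prove positivity for $\SL_n(F)$ by a recursive formula, then reduce a general $G$ to subgroups of type $\rA$.

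\textbf{Step 1: type $\rA$ via branching.} Work with $\GL_n(F)$, which is harmless because the centre only shifts parameters. For $\lambda$ unitary, restrict $\phi_\lambda^{(n)}$ to $\GL_{n-1}(F)$ and expand the restriction, which is bi-$K_{n-1}$-invariant and positive definite, by the spherical Plancherel theorem of $\GL_{n-1}$:
\[
\phi^{(n)}_\lambda\big|_{\GL_{n-1}}(g)=\int_{\widehat{\mathsf A}_{n-1,\mathrm{ur}}/W_{n-1}} c(\lambda,\nu)\,\phi^{(n-1)}_\nu(g)\,d\mu_{\mathrm{Pl}}(\nu).
\]
The coefficient $c(\lambda,\nu)$ is the spherical period $\int_{\GL_{n-1}}\phi^{(n)}_\lambda\,\overline{\phi^{(n-1)}_\nu}$. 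By unramified Rankin--Selberg theory it equals, up to a positive constant,
\[
\frac{|L(\tfrac12,\pi_\lambda\times\pi_\nu)|^2}{L(1,\pi_\lambda,\mathrm{Ad})\,L(1,\pi_\nu,\mathrm{Ad})}
\]
or a closely analogous expression, and this is strictly positive for unitary parameters. The diagonal torus $\mathsf A_n$ splits as $\mathsf A_{n-1}\times F^\times/\cO^\times$ (or $\bR_{>0}$), and the last coordinate can be moved into $\GL_{n-1}$ by conjugation and central characters. Taking Fourier transforms on $\mathsf A_n$ then gives the recursion
\[
\widehat{\phi^{(n)}_\lambda}(\mu)=\int c(\lambda,\nu)\,\widehat{\phi^{(n-1)}_\nu}(\mu')\,d\mu_{\mathrm{Pl}}(\nu)\cdot(\text{explicit positive factor in }\mu_n).
\]
The base case $n=2$ is an explicit one-variable computation: a ratio of Gamma functions in the Archimedean case, and a geometric sum in the $p$-adic case. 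Both are visibly positive. Induction then gives positivity on $\widehat{\mathsf A}_{\mathrm{ur}}$, since the integrand is positive on a set of positive Plancherel measure. Positivity of the coefficients is automatic, because a positive definite function restricts to a positive measure in the Plancherel decomposition, so the real work is strictness. The key point is that $L(\tfrac12,\cdot)$ never vanishes for unramified unitary data, so $c>0$ everywhere.

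\textbf{Step 2: reduction for general $G$.} Choose a subgroup $G_1<G$ generated by root subgroups of a closed subsystem of type $\rA_{r_1}\times\cdots\times\rA_{r_k}$ that has full rank in the restricted root system. Examples are $\SL_2^r$ for $\rB_r,\rC_r$, or a maximal-rank $\rA$-subsystem in general. Then $G_1$ shares the split torus $A$, and $K\cap G_1$ is maximal compact in $G_1$. The restriction $\phi_\lambda|_{G_1}$ is bi-$(K\cap G_1)$-invariant and positive definite, so the spherical Bochner theorem gives $\phi_\lambda|_{G_1}=\int\phi^{G_1}_\nu\,d\sigma_\lambda(\nu)$ with $\sigma_\lambda\ge0$. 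One must show that $\sigma_\lambda$ charges the tempered spectrum of $G_1$ and that the $A$-integrals converge; decay estimates for $\phi_\lambda$ through $\Xi$ bounds make this plausible. Restricting to $\mathsf A$ and applying Step 1 factorwise gives positivity. Where $\sigma_\lambda$ also meets complementary series, one still needs nonnegativity of their transforms and a positive tempered part.

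\textbf{Main obstacle.} I expect Step 2 to be the hardest part: showing that $\sigma_\lambda$ has nonzero tempered part and justifying Fubini. I would attack it by writing $\phi_\lambda=\int_K e^{(\lambda+\rho)H(ka)}dk$ and decomposing $K$ via $K\cap G_1$. This exhibits $\phi_\lambda|_{G_1}$ as an average of $G_1$-spherical functions with unitary parameters $\lambda|_{\goa}$ shifted by the $\rho$-discrepancy $\rho_G-\rho_{G_1}$. That expresses the restriction as a positive combination of matrix coefficients in $\Ind_{P_1}^{G_1}$ of unitary characters, which gives Fourier positivity directly from Step 1.
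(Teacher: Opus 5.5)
\textbf{The gap is in Step 2.} Your Step 1 follows the paper's route for $\SL_n(F)$: restrict the $\GL_n$ spherical function to $\GL_{n-1}$, compute the spherical period by Rankin--Selberg, and iterate down to the diagonal torus. The problem is Step 2, which you flag yourself. Nothing in your proposal shows that the Bochner measure $\sigma_\lambda$ of $\phi_\lambda|_{G_1}$ is carried by the unitary principal series of $G_1$. That fact is exactly what is needed both to apply Step 1 to every term and to justify Fubini.

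Your suggested fix does not work. Splitting $K$ along $K\cap G_1$ in $\int_K e^{(\lambda+\rho)H(ka)}\,dk$ does not factor the Iwasawa projection. In fact $\pi_\lambda|_{G_1}$ is not a combination of principal series of $G_1$. By Mackey theory, on an open $G_1$-orbit in $G/P$ it is induced from the stabiliser, and that stabiliser is in general much smaller than a Borel subgroup of $G_1$. For $\mathrm{Sp}_4(\bR)\supset\SL_2(\bR)\times\SL_2(\bR)$ the open orbit has a $2$-dimensional stabiliser, while $\dim P_1=4$. Moreover, $G_1$-spherical functions with parameter $\lambda+(\rho_G-\rho_{G_1})$ have non-zero real part once $G_1\neq G$. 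They are not unitary principal series coefficients, and in general not even positive definite, so Step 1 says nothing about them. Your fallback (a positive tempered part plus non-negative complementary-series contributions) is also unproved. Complementary-series coefficients would also destroy the uniform domination on $\mathsf A$ that Fubini needs.

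\textbf{The missing idea is temperedness.} Since $\lambda$ is unitary, $\pi_\lambda$ is weakly contained in $L^2(G)$. Restriction preserves weak containment, and $L^2(G)|_{G_1}\prec L^2(G_1)$, so $\pi_\lambda|_{G_1}\prec L^2(G_1)$. Hence the cyclic $G_1$-representation generated by the $(K\cap G_1)$-fixed vector $v_\lambda$ is supported on the spherical tempered dual of $G_1$. An irreducible spherical tempered representation is a constituent of some $\Ind_{MAN}^{G_1}(\sigma\otimes e^{\mu})$ with $\sigma$ discrete series, and sphericity forces $M$ compact and $\sigma$ trivial. So it comes from a unitary principal series, and no complementary series occur.

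It follows that $\phi_\lambda|_A=\int\phi^{G_1}_{\lambda'}|_A\,d\sigma_\lambda(\lambda')$ with $\sigma_\lambda$ a probability measure on $\widehat A_{\mathrm{ur}}/W_{G_1}$. The bound $|\phi^{G_1}_{\lambda'}|\le\Xi_{G_1}$, with $\Xi_{G_1}|_A\in L^1$, justifies Fubini. Step 1, via the finite cover by $\prod_i\SL_{n_i+1}(F)$, then makes every term strictly positive.

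\textbf{Secondary points to repair.}
\begin{itemize}
\item $G_1$ must be \emph{split}: take one $\mathfrak{sl}_2$-triple per root of $\Phi'$, built with the Cartan involution. If you generate $G_1$ by full root subgroups, its factors can be groups such as $\SO(1,k)$, which Step 1 does not cover.
\item $\Phi^{(n)}_\lambda\circ\iota_{n-1}$ is not in the Harish-Chandra Schwartz space. The expansion with density $\mathcal P_n(\lambda,-\nu)|{\bf c}^{(n-1)}(\nu)|^{-2}$ therefore needs its own proof; the paper uses Stirling-type bounds on this kernel and pairing with bi-$K_{n-1}$-invariant test functions.
\item $\Phi^{(n)}_\lambda|_{A_n}$ is not integrable along the centre, so the recursion is not ``an integral times a positive factor in $\mu_n$''. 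The $a_n$-dependence is the unitary character $|a_n|_F^{[\lambda]-[\nu]}$. This confines the $\nu$-integral to the affine fibre $[\nu]=[\mu^-]$. Strict positivity must then be checked on that fibre, which is Plancherel-null in $\mathfrak I^{n-1}$.
\end{itemize}
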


By the spherical Bochner theorem, every continuous bi-\(K\)-invariant positive
definite function \(\psi\) on \(G\) admits a representation as a superposition
of positive definite spherical functions. We call the corresponding representing
measure the spherical Bochner measure of \(\psi\). Theorem~A has the following
consequence.

\begin{corollary}
Let \(\psi\) be a continuous bi-\(K\)-invariant positive definite function on
\(G\), and suppose that \(\smash{\psi|_{\mathsf A}\in L^1(\mathsf A)}\). Assume that the
spherical Bochner measure of \(\psi\) gives positive mass to the unitary
spherical principal series. Then the Fourier transform of \(\smash{\psi|_{\mathsf A}}\)
is strictly positive on \(\widehat{\mathsf A}_{\mathrm{ur}}\).
\end{corollary}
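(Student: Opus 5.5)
We derive the corollary from Theorem~A by disintegrating the Fourier transform of \(\psi|_{\mathsf A}\) along the spherical Bochner measure. By the spherical Bochner theorem,
\[
\psi(g)=\int_{\Omega}\phi_\lambda(g)\,d\nu(\lambda),
\]
where \(\nu\) is a finite positive Borel measure on the set \(\Omega\subset\goa_\bC^*/W\) of positive definite spherical parameters. Write \(\Omega_{\mathrm{ps}}\) for the unitary spherical principal series parameters, identified with \(\widehat{\mathsf A}_{\mathrm{ur}}/W\). By hypothesis \(\nu(\Omega_{\mathrm{ps}})>0\). Let \(\nu=\nu_{\mathrm{ps}}+\nu'\), where \(\nu_{\mathrm{ps}}=\nu|_{\Omega_{\mathrm{ps}}}\), and set
\[
\psi_{\mathrm{ps}}=\int\phi_\lambda\,d\nu_{\mathrm{ps}},\qquad \psi'=\int\phi_\lambda\,d\nu'.
\]
Both are continuous, bi-\(K\)-invariant and positive definite, and \(\psi=\psi_{\mathrm{ps}}+\psi'\).

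The first step is to show that each piece restricts to an \(L^1\) function on \(\mathsf A\), so that both Fourier transforms make sense. For \(\psi_{\mathrm{ps}}\) we use the uniform bound \(|\phi_\lambda(a)|\le\phi_0(a)\), valid for \(\lambda\) in the unitary principal series since \(\phi_\lambda\) is an integral over \(K\) of \(e^{(\lambda-\rho)(H(ak))}\) with \(\lambda\) purely imaginary. We also use \(\phi_0|_{\mathsf A}\in L^1(\mathsf A)\), which is the integrability statement recorded before Theorem~A, applied at \(\lambda=0\). These give \(|\psi_{\mathrm{ps}}|\le\nu(\Omega_{\mathrm{ps}})\,\phi_0\) on \(\mathsf A\). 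Consequently \(\psi'|_{\mathsf A}=\psi|_{\mathsf A}-\psi_{\mathrm{ps}}|_{\mathsf A}\in L^1(\mathsf A)\).

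Since \(\psi'|_{\mathsf A}\) is continuous, positive definite on \(\mathsf A\) and integrable, Bochner's theorem makes its Fourier transform a non-negative continuous function. Hence \(\widehat{\psi'}(\mu)\ge0\) for every \(\mu\in\widehat{\mathsf A}_{\mathrm{ur}}\).

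For \(\psi_{\mathrm{ps}}\), the domination by \(\phi_0\in L^1(\mathsf A)\) justifies Fubini, giving
\[
\widehat{\psi_{\mathrm{ps}}}(\mu)=\int\widehat{\phi_\lambda|_{\mathsf A}}(\mu)\,d\nu_{\mathrm{ps}}(\lambda).
\]
Theorem~A says the integrand is strictly positive for every \(\lambda\), and \(\nu_{\mathrm{ps}}\) has positive total mass. It follows that \(\widehat{\psi_{\mathrm{ps}}}(\mu)>0\), and therefore \(\widehat\psi(\mu)=\widehat{\psi_{\mathrm{ps}}}(\mu)+\widehat{\psi'}(\mu)>0\).

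Only two points need care. One is the measurability of \(\lambda\mapsto\widehat{\phi_\lambda|_{\mathsf A}}(\mu)\); this follows from joint continuity of \((\lambda,a)\mapsto\phi_\lambda(a)\) together with dominated convergence. The other is the uniform domination \(|\phi_\lambda|\le\phi_0\), which is standard from Harish-Chandra's integral formula in both the Archimedean and non-Archimedean cases. The splitting avoids needing any integrability of the complementary-series part directly.
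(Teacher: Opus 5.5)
Your proof is correct and follows the paper's argument. You split the spherical Bochner measure into its unitary principal series part, which contributes strictly positively by Theorem~A, and the remainder, which contributes non-negatively as an integrable positive definite function on \(\mathsf A\). You also supply two details the paper leaves implicit: domination by \(\phi_0\) to justify Fubini, and obtaining \(\psi'|_{\mathsf A}\in L^1(\mathsf A)\) as a difference, so that no integrability of the complementary part is needed.
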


Indeed, after restricting to \(\mathsf A\), the spherical Bochner decomposition
is a positive superposition of positive definite functions on the abelian group
\(\mathsf A\). The part supported on the unitary spherical principal series
contributes strictly positively by Theorem~A, while the remaining part
contributes non-negatively.

The support condition in the corollary reflects the scope of the present paper,
rather than the expected limitation of the method. Other parts of the unitary
spherical spectrum, including complementary series representations, will be
treated in subsequent papers in this series.

In the body of the paper, especially in the \(\GL_n\)- and \(\SL_n\)-sections,
we often write \(A\), \(A_n\), or \(A_n'\) for these split directions
themselves. Thus compact diagonal factors in the non-Archimedean case, and the
compact sign or unitary factors in the Archimedean case, are suppressed in the
Fourier-transform notation.

\subsection*{Explicit lower bounds for \(\SL_n(F)\)}

We now describe the lower bounds more explicitly in the case
\[
G=\SL_n(F).
\]
Let \(A_n\) be the diagonal torus in \(\GL_n(F)\), and let
\[
A_n'=A_n\cap \SL_n(F)
\]
be the determinant-one diagonal torus. As above, the relevant split torus
direction is obtained by quotienting \(A_n'\) by its maximal compact subgroup;
in the Archimedean case this quotient is naturally identified with the identity
component. We suppress this distinction in the notation and write \(A_n'\) for
the split direction.

We shall use additive coordinates on the unitary dual of \(A_n'\). Put
\[
\frak I=
\begin{cases}
 i\bR, & F \text{ Archimedean},\\
 i\bR/(2\pi i/\log q_F)\bZ, & F \text{ non-Archimedean},
\end{cases}
\]
where \(q_F\) denotes the cardinality of the residue field of \(F\) in the
non-Archimedean case. Under this parametrization, the unitary dual of the split
direction \(A_n'\) is identified with $\frak I^n/\frak I$, 
where \(\frak I\) is embedded diagonally. We denote the quotient map by
\[
{\sf p}:\frak I^n\to \frak I^n/\frak I.
\]
If \(\lambda=(\lambda_1,\ldots,\lambda_n)\in\frak I^n\), we write
\begin{equation}
\label{lambda-bracket}
[\lambda]=\lambda_1+\cdots+\lambda_n.
\end{equation}
Thus every class in \(\frak I^n/\frak I\) has a representative satisfying $[\lambda]=0$.

For \(\bar\lambda\in\frak I^n/\frak I\), let \(\phi_{\bar\lambda}\) denote the
corresponding positive definite spherical function on \(\SL_n(F)\). We use the
local zeta function
\[
\zeta_F(s)=
\begin{cases}
\pi^{-s/2}\Gamma(s/2), & F=\bR,\\
2(2\pi)^{-s}\Gamma(s), & F=\bC,\\
(1-q_F^{-s})^{-1}, & F \text{ non-Archimedean}.
\end{cases}
\]
For \(\lambda\in\frak I^n\) and \(\nu\in\frak I^{n-1}\), define
\(
P_n(\lambda,\nu)\)
as in (\ref{P-n-lam-nu})
and  let \({\bf c}^{(n-1)}(\nu)\)
be the Harish-Chandra 
$c$-function given
in (\ref{HC-c}).
For parameters in the unitary range, the product
\[
P_n(\lambda,-\nu)\,\big|{\bf c}^{(n-1)}(\nu)\big|^{-2}
\]
is non-negative; after the usual limiting interpretation, it is strictly
positive away from the hyperplanes $\nu_k=\nu_l$.

We define \(c(\lambda,\mu)\) recursively on the fibers
\[
[\lambda]=[\mu].
\]
In rank \(1\), this condition forces \(\mu=\lambda\), and we set
\[
c(\lambda,\lambda)=1.
\]
For \(n\geq 2\), and
\begin{equation}
\label{lambda-up-}
\mu^-=(\mu_1,\ldots,\mu_{n-1}),
\end{equation}
set
\[
c(\lambda,\mu)
=
\int_{\substack{\nu\in\frak I^{n-1}\\ [\nu]=[\mu^-]}}
P_n(\lambda,-\nu)\,\big|{\bf c}^{(n-1)}(\nu)\big|^{-2}\,
c(\nu,\mu^-)\,d\nu .
\]
Here \(d\nu\) is the measure on the affine fiber obtained by translating a fixed
Haar measure on the corresponding linear fiber. See Section \ref{Sec3} and Section \ref{Sec4} for the normalization of Haar measures.

We fix once and for all norms on the parameter spaces \(\frak I^m\) and on the
quotients \(\frak I^m/\frak I\), using representatives in a fixed fundamental
domain in the non-Archimedean case. Since all such choices are equivalent in
fixed dimension, the precise norm is immaterial for the estimates below.

\begin{maintheorem}
Let \(G=\SL_n(F)\). For \(\bar\lambda,\bar\mu\in \frak I^n/\frak I\), choose
\[
\lambda\in{\sf p}^{-1}(\bar\lambda)
\qquad\text{with}\qquad
[\lambda]=0.
\]
Then
\[
\widehat{\phi}_{\bar\lambda}(\bar\mu)
=
\sum_{\substack{\mu\in{\sf p}^{-1}(\bar\mu)\\ [\mu]=0}}
c(\lambda,\mu).
\]
The sum is finite: it has one term in the Archimedean case and \(n\) terms in
the non-Archimedean case. In particular, there exist constants
\(C_n,D_n>0\) such that
\[
\widehat{\phi}_{\bar\lambda}(\bar\mu)
\geq
C_n e^{-D_n(\|\bar\lambda\|+\|\bar\mu\|)}
\]
for all \(\bar\lambda,\bar\mu\in\frak I^n/\frak I\).
\end{maintheorem}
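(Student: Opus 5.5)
Starting from the abstract's main input, I would first prove a Plancherel expansion for the restriction of a $\GL_n(F)$-spherical function to $\GL_{n-1}(F)$. Fix $\lambda\in\frak I^n$ and regard $\phi^{(n)}_\lambda$ as a spherical function on $\GL_n(F)$, with the central character absorbed by $[\lambda]$. Its restriction to $\GL_{n-1}(F)$ is bi-$K_{n-1}$-invariant and positive definite. Using the spherical Plancherel theorem for $\GL_{n-1}(F)$, I would expand
\[
\phi^{(n)}_\lambda\big|_{\GL_{n-1}(F)}=\int_{\frak I^{n-1}} P_n(\lambda,-\nu)\,\big|{\bf c}^{(n-1)}(\nu)\big|^{-2}\,\phi^{(n-1)}_\nu\,d\nu,
\]
with the fiber condition on $[\nu]$ coming from the central characters. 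The coefficient of $\phi^{(n-1)}_\nu$ is a spherical period: the inner product of the restricted function against $\phi^{(n-1)}_{\nu}$. By unfolding through Whittaker models, this is an unramified Rankin--Selberg integral for $\GL_n\times\GL_{n-1}$, so it equals a product of $L(1/2,\pi_\lambda\times\pi_{-\nu})$ times its conjugate, normalized by adjoint $L$-factors. This is the quantity recorded as $P_n$ in (\ref{P-n-lam-nu}), and it is non-negative on the unitary range. Convergence of the integral would follow from decay of $P_n$ against the polynomial (Archimedean) or bounded (non-Archimedean) Plancherel density.

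Next, I would restrict to the torus $A_{n-1}\subset \GL_{n-1}$ and iterate. Since $A_n=A_{n-1}\times \GL_1$ sits inside $\GL_{n-1}\times$ centre, taking the Fourier transform in the last coordinate is governed by the central character, which forces $\mu_n$ through $[\lambda]=[\mu]$. Taking the Fourier transform in the first $n-1$ coordinates passes inside the $\nu$-integral by Fubini, which needs an $L^1$ bound uniform in $\nu$. By induction this yields $\widehat{\phi^{(n)}_\lambda}(\mu)=c(\lambda,\mu)$ on $\GL_n$, starting from the rank-one base case $c(\lambda,\lambda)=1$. To pass to $\SL_n$, I would note that the split direction of $A_n'$ is the kernel of the determinant map to the centre direction. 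Fourier transforms on $A_n'$ of functions pulled back from $\GL_n$ are then sums over lifts $\mu$ of $\bar\mu$ with $[\mu]=0$. Archimedean lifts are unique, while non-Archimedean lifts differ by elements of $(2\pi i/\log q_F)\bZ$ shifts divisible by $n$ modulo the period, which gives $n$ terms. This proves the identity.

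For the lower bound, I would bound the integrand below on a region. $P_n$ is built from $\zeta_F$ at unitary shifts $1/2+\lambda_i-\nu_j$, and $|{\bf c}|^{-2}$ from ratios of $\zeta_F$ at $\nu_k-\nu_l$. In the non-Archimedean case everything is continuous and positive on a compact torus away from the hyperplanes $\nu_k=\nu_l$, so $c>0$ follows by integrating over a positive-measure set. In the Archimedean case, Stirling's formula gives $|\Gamma(1/2+it)|\asymp e^{-\pi|t|/2}$. Restricting the $\nu$-integral to a unit ball around a point interlacing $\lambda$ (or near $\mu^-$), of size $O(\|\lambda\|+\|\mu\|)$ and avoiding walls, gives a lower bound of $e^{-D(\|\lambda\|+\|\nu\|)}$ for the integrand. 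The inductive hypothesis then applied to $c(\nu,\mu^-)$ gives $C_ne^{-D_n(\cdots)}$ after enlarging constants.

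I expect the main obstacle to be the spherical period computation: identifying the Plancherel coefficient exactly with the Rankin--Selberg factor $P_n$, including its normalization and the measure on the fiber. I would try first to use the Shintani/Casselman--Shalika formula for unramified Whittaker functions with the unfolded $\GL_n\times\GL_{n-1}$ zeta integral, and Stade's formula in the Archimedean case. A second technical point is justifying the interchange of the Fourier transform with the $\nu$-integral, which I would handle via uniform Harish-Chandra $\Xi$-function bounds on $A$.
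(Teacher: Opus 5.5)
Your architecture matches the paper's. You expand $\Phi^{(n)}_\lambda\circ\iota_{n-1}$ on $\GL_{n-1}(F)$ with Rankin--Selberg period coefficients, iterate along $G_k\times G_1^{n-k}$ while tracking central characters, and descend to $\SL_n$ via zero-sum lifts. The lower bound then comes inductively from restricting the $\nu$-integral to a fixed-measure part of the fiber $[\nu]=[\mu^-]$ kept away from the walls; the paper uses diagonal translates $r([\mu^-])\mathbf 1+\Omega_n$ of one fixed set, which leave every $\nu_k-\nu_l$ unchanged. Computing the period by Whittaker unfolding is a legitimate alternative to the paper's route, which uses the factorization of Lemma~\ref{lem:Sun} over the open $G_{n-1}$-orbit in $\mathcal B_n\times\mathcal B_{n-1}$ and then the LLSS comparison with $Z(\tfrac12,W_f,W_{f'})$.

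The genuine gap is the step where you move the torus Fourier transform inside the $\nu$-integral ``by Fubini, using uniform $\Xi$-bounds on $A$''. This cannot work. $\Phi^{(n-1)}_\nu$ has unitary central character $|\cdot|_F^{[\nu]}$, so $|\Phi^{(n-1)}_\nu|$ is constant along the scalar direction of $A_{n-1}$, and $\Xi_{\GL_{n-1}}|_{A_{n-1}}\notin L^1$. On $A_n'$, parametrized by $a^-\in A_{n-1}$, the integrand $|a_n|_F^{[\lambda]-[\nu]}\Phi^{(n-1)}_\nu(a^-)$ is therefore not absolutely integrable for any single $\nu$. Integrability of $\Phi^{(n)}_\lambda|_{A_n'}$ emerges only through cancellation after integrating over the $[\nu]$-direction. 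For the same reason there is no function $\widehat{\phi^{(n)}_\lambda}(\mu)$ on the $\GL_n$-torus to induct on: that transform is a density on the hyperplane $[\mu]=[\lambda]$.

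The repair is the paper's: do not transform on $\GL$-tori at all. Instead prove the pointwise expansion $\Phi^{(n)}_\lambda(a)=\int_{[\mu]=[\lambda]}c(\lambda,\mu)\chi_\mu(a)\,d\mu$ by inserting the rank-$(n-1)$ expansion into the centrally twisted restriction formula, using Fubini only on the spectral side. Then push forward to $\frak I^n/\frak I$ on $A_n'$, and use uniqueness of Fourier transforms for an $L^1$ function written as an absolutely convergent superposition of characters with continuous integrable density.

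This needs estimates absent from your plan. Specifically, $c(\lambda,\mu)\le C_\varepsilon e^{\varepsilon\|\lambda\|}$ and $\int_{[\mu]=[\lambda]}c(\lambda,\mu)\,d\mu\le C_\varepsilon e^{\varepsilon\|\lambda\|}$ for every $\varepsilon>0$ (Lemma~\ref{lem:est}). These are proved inductively from the kernel bound $P_n(\lambda,-\nu)|{\bf c}^{(n-1)}(\nu)|^{-2}\le C_\varepsilon e^{\varepsilon\|\lambda\|-\eta_n\varepsilon\|\nu\|}$. Balancing $\varepsilon$-growth in $\lambda$ against $\eta_n\varepsilon$-decay in $\nu$ is exactly what absorbs the possible growth of $c(\nu,\mu^-)$ in $\nu$.

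That kernel decay does not follow from a generic ``decay of $P_n$''. Let $L$, $N$, $M$ be the sums of $|\lambda_i-\lambda_j|$, $|\nu_k-\nu_l|$ and $|\lambda_i-\nu_k|$. By Stirling the kernel is governed by $e^{\alpha(L+N-M)}$ with $\alpha=\pi/[\bC:F]$, since the wall factors $|\zeta_F(\nu_k-\nu_l)|^{-2}$ contribute a growing $e^{\alpha N}$ up to polynomial factors. To beat it you need the counting inequalities $L+N\le M$ and $N\le(1-\frac2n)M$ of Lemma~\ref{lem:Yu}.

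The same bound is what establishes the restricted expansion in the first place. $F_\lambda$ is not in the Harish-Chandra Schwartz space, so Plancherel inversion cannot simply be quoted. The paper instead pairs both sides with $u\in C_c^\infty(K_{n-1}\backslash G_{n-1}/K_{n-1})$, using the kernel bound and the rapid decay of $\widehat u$ to justify the interchanges.
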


The exponential lower bound is proved by applying the recursive formula
inductively, using lower bounds for the local zeta factors on explicit
positive-measure subsets of the integration domains. The recursive formula is
stated only in the \(\SL_n(F)\)-case, where the notation is transparent and the
lower bounds are completely explicit.

\subsection*{Some comments on the proof}

We briefly indicate the main steps in the proof in the case \(G=\SL_n(F)\). The
first step is to work on
\[
G_n=\GL_n(F)
\]
rather than directly on \(\SL_n(F)\). A unitary spherical parameter
\[
\bar\lambda\in\frak I^n/\frak I
\]
for \(\SL_n(F)\) is represented by a parameter
\[
\lambda\in\frak I^n
\]
for \(G_n\), and the corresponding spherical function on \(\SL_n(F)\) is the
restriction of the \(G_n\)-spherical function \(\Phi_\lambda^{(n)}\). Shifting
\(\lambda\) by a diagonal parameter only twists \(\Phi_\lambda^{(n)}\) by a
power of the determinant, and hence does not change its restriction to
\(\SL_n(F)\).

The key analytic input is a Plancherel expansion for the restriction of
\(\Phi_\lambda^{(n)}\) to
\[
G_{n-1}=\GL_{n-1}(F),
\]
embedded in \(G_n\) by
\[
g\longmapsto
\begin{pmatrix}
g&0\\
0&1
\end{pmatrix}.
\]
The coefficient of the spherical function \(\Phi_\nu^{(n-1)}\) in this expansion
is a spherical period for the pair \(G_n\times G_{n-1}\). This period is
computed by relating an open-orbit integral on a product of flag varieties to
the Rankin--Selberg integral for \(\GL_n\times\GL_{n-1}\). The result
is the kernel
\[
P_n(\lambda,-\nu)\big|{\bf c}^{(n-1)}(\nu)\big|^{-2}.
\]

There is a technical point in applying Plancherel. The restricted function
\[
g\longmapsto
\Phi_\lambda^{(n)}
\left(
\begin{pmatrix}
g&0\\
0&1
\end{pmatrix}
\right)
\]
does not lie in the Harish--Chandra Schwartz space of \(G_{n-1}\), so the usual
Plancherel formula cannot simply be invoked. Instead, we prove the required
expansion by pairing against compactly supported bi-\(K_{n-1}\)-invariant test
functions and using absolute convergence estimates for the period kernel. These
estimates are ultimately elementary consequences of Stirling's formula in the
Archimedean case and compactness of the unitary parameter space in the
non-Archimedean case.

The Plancherel expansion is then iterated along the chain
\[
A_n=\widetilde G_1\subset \widetilde G_2\subset\cdots\subset \widetilde G_n=G_n,
\qquad
\widetilde G_k=G_k\times G_1^{n-k}.
\]
At each step one keeps track of the central character. This produces the
recursive coefficient \(c(\lambda,\mu)\) and gives the Fourier expansion of
\(\Phi_\lambda^{(n)}\) on the diagonal torus of \(G_n\).

Finally, one passes back to \(\SL_n(F)\). Choosing a zero-sum representative
\([\lambda]=0\), and then pushing forward from the zero-sum hyperplane
\[
\{\mu\in\frak I^n:[\mu]=0\}
\]
to the quotient \(\frak I^n/\frak I\), gives the formula for
\(\widehat{\phi}_{\bar\lambda}(\bar\mu)\). In the Archimedean case this
pushforward has one point in each fiber; in the non-Archimedean case the fibers
have cardinality \(n\).

The exponential lower bound is proved from the same recursive formula. One
restricts the defining integral for \(c(\lambda,\mu)\) to explicit
positive-measure subsets of the affine fibers. These subsets are chosen by
translating a fixed set in the zero-sum fiber, so their measure is independent
of the parameter \(\mu\), and the factors involving
\(\nu_k-\nu_l\) remain uniformly controlled. The remaining local zeta factors
are bounded below using elementary Archimedean or non-Archimedean estimates.
Induction then gives the stated lower bound.

\section*{Acknowledgement} 

The authors thank Binyong Sun for addressing the proof of Lemma \ref{lem:Sun}, and thank Wee Teck Gan, Fan Gao and Xiaocheng Li for helpful discussions. M. Bj\"orklund was supported by the Swedish
research council
VR 11253322, and 
G. Zhang by VR 11253580. D. Liu was supported in part by National Key R \& D Program of China No. 2022YFA1005300. J. Yu was supported in part by the NSFC Grants 12431001 and 11971036.

\section{Preliminaries}

In this section we fix notation and recall the basic facts about local fields, split tori and spherical functions which will be used throughout the paper. We have tried to keep the discussion self-contained, but we shall freely use standard facts from harmonic analysis on reductive groups over local fields.

\subsection{Local fields and Fourier transforms}

Let \(F\) be a local field. We denote by \(|\cdot|_F\) its normalized absolute value. If \(F\) is non-Archimedean, we write \(\cO_F\) for its ring of integers, \(\mathfrak p_F\) for its maximal ideal, and
\[
q_F=\#(\cO_F/\mathfrak p_F)
\]
for the cardinality of its residue field.

We shall use the following local zeta function:
\begin{equation}
\label{zeta}
\zeta_F(s)=
\begin{cases}
\pi^{-s/2}\Gamma(s/2), & F=\bR,\\
2(2\pi)^{-s}\Gamma(s), & F=\bC,\\
(1-q_F^{-s})^{-1}, & F \text{ non-Archimedean}.
\end{cases}
\end{equation}
The local zeta functions appear naturally in the Harish--Chandra \(c\)-function and in the Rankin--Selberg factors used later.

Let \(V\) be a finite-dimensional real vector space and let \(V^*\) be its dual. We identify the unitary dual of \(V\), viewed as an additive group, with \(iV^*\). Thus \(\mu\in iV^*\) corresponds to the character
\[
x\mapsto e^{\mu(x)}.
\]
For \(f\in L^1(V)\), we use the Fourier transform convention
\[
\widehat f(\mu)
=
\int_V f(x)e^{-\mu(x)}\,dx,
\qquad \mu\in iV^*.
\]
The Haar measure \(dx\) will be fixed once and for all in each situation. With this convention, the Fourier transform of a positive definite \(L^1\)-function is non-negative.

\subsection{Cartan data and Fourier transforms on split tori}

Let \(G\) be the group of \(F\)-points of a connected semisimple linear algebraic group defined over \(F\). We fix a maximal compact subgroup \(K<G\), a maximal \(F\)-split torus \(A<G\), and a positive Weyl chamber \(A^+\subset A\) such that
\[
G=K A^+ K .
\]
We also fix a minimal parabolic subgroup \(P=MAN\), with split component \(A\), and write
\[
G=KAN
\]
for the corresponding Iwasawa decomposition.

We shall work with the non-compact split direction of \(A\). Thus we put
\[
\mathsf A=
\begin{cases}
A^\circ, & F \text{ Archimedean},\\
A/(A\cap K), & F \text{ non-Archimedean}.
\end{cases}
\]
In the Archimedean case, \(A^\circ=\exp(\goa)\), where \(\goa\) is the Lie algebra of \(A^\circ\). In the non-Archimedean case, there is no Lie algebra involved. Instead we set
\[
\goa=X_*(A)\otimes_\mathbb Z \mathbb R,
\]
where
\[
X_*(A)=\Hom_F(\mathbb G_m,A)
\]
is the cocharacter lattice of \(A\). Its dual is
\[
\goa^*=X^*(A)\otimes_\mathbb Z\mathbb R,
\]
where
\[
X^*(A)=\Hom_F(A,\mathbb G_m)
\]
is the character lattice. \\

The map
\[
H_A:\mathsf A\to\goa
\]
is the logarithm map in the Archimedean case. In the non-Archimedean case, it is the valuation map characterized by
\[
\langle \chi,H_A(a)\rangle
=
-\log_{q_F}|\chi(a)|_F,
\qquad
\chi\in X^*(A).
\]
Let \(\Sigma\subset \goa^*\) denote the restricted root system, let \(\Sigma^+\subset \Sigma\) be the set of positive roots determined by the positive Weyl chamber \(A^+\), and let \(W\) be the Weyl group. We write
\[
\rho=\frac12\sum_{\alpha\in\Sigma^+}m_\alpha\alpha,
\]
where \(m_\alpha\) denotes the multiplicity of the root \(\alpha\). \\

We identify the spherical unitary principal dual of \(\mathsf A\) with
\begin{equation}
\label{hat-A}
\widehat{\mathsf A}_{\mathrm{ur}}
=
\begin{cases}
i\goa^*, & F \text{ Archimedean},\\[2mm]
i\goa^*/(2\pi i/\log q_F)X^*(A), & F \text{ non-Archimedean}.
\end{cases}
\end{equation}
Thus, for \(f\in L^1(\mathsf A)\), we write
\[
\widehat f(\mu)
=
\int_{\mathsf A} f(a)e^{-\mu(H_A(a))}\,da,
\qquad \mu\in \widehat{\mathsf A}_{\mathrm{ur}}.
\]
In the non-Archimedean case the expression \(e^{-\mu(H_A(a))}\) is well-defined because \(\mu\) is taken modulo the lattice \((2\pi i/\log q_F)X^*(A)\).

\subsection{Spherical functions}

We recall the basic notation for spherical functions. Let \(\lambda\in \goa_\bC^*\). We denote by \(\phi_\lambda\) the Harish--Chandra spherical function with spectral parameter \(\lambda\), normalized by
\[
\phi_\lambda(e)=1.
\]
Thus \(\phi_\lambda\) is bi-\(K\)-invariant and satisfies
\[
D\phi_\lambda=\chi_\lambda(D)\phi_\lambda
\]
for every \(G\)-invariant differential operator \(D\) on \(G/K\) in the Archimedean case, with the analogous statement for the spherical Hecke algebra in the non-Archimedean case.

In this paper we shall mainly consider the positive definite spherical functions arising from the unitary spherical principal series. These correspond to parameters
\[
\lambda\in \widehat{\mathsf A}_{\mathrm{ur}}.
\]
For such \(\lambda\), the function \(\phi_\lambda\) is a continuous positive definite function on \(G\).

We shall frequently restrict spherical functions to the split torus direction \(\mathsf A\). For \(\lambda\in \widehat{\mathsf A}_{\mathrm{ur}}\), we write
\[
\widehat{\phi_\lambda}(\mu)
=
\int_{\mathsf A}
\phi_\lambda(a)e^{-\mu(H_A(a))}\,da,
\qquad
\mu\in \widehat{\mathsf A}_{\mathrm{ur}},
\]
whenever the integral is absolutely convergent. By the standard Harish--Chandra estimates, we see that \(\phi_\lambda|_{\mathsf A}\in L^1(\mathsf A)\) for every non-trivial unitary spherical parameter \(\lambda\); see \cite[Theorem 4.5.3]{W1} and \cite[Lemma II.1.1]{W03}. Hence $\smash{\widehat{\phi}_\lambda}$ is a continuous function on \(\smash{\widehat{\mathsf A}_{\mathrm{ur}}}\).

Recall the notion of weak containment introduced by J. Fell. A unitary representation $\pi$ of a Lie group $G$ is said to be weakly contained in a unitary representation $\sigma$ of $G$ if any diagonal matrix coefficient of $\pi$ can be approximated, uniformly on compact subsets of $G$, by convex combinations of diagonal matrix coefficients of $\sigma$. An irreducible unitary representation $\pi$ of $G$ is said to be tempered if it is weakly contained in the regular representation $L^{2}(G)$. 

\begin{proposition}\label{P:tempered-res}
Let $G$ be a Lie group, and let $H$ be a closed subgroup which is a type I group in the sense of von Neumann algebra. If $\pi$ is a tempered representation of $G$, then the support of $\pi|_{H}$
is contained in the tempered dual of $H$. 
\end{proposition}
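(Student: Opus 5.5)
The plan is to use transitivity of weak containment together with the fact that restriction to $H$ preserves weak containment. Since $\pi$ is tempered, we have $\pi \prec \lambda_G$, where $\lambda_G$ is the left regular representation on $L^2(G)$. Weak containment is stable under restriction to closed subgroups. Indeed, a diagonal matrix coefficient of $\pi|_H$ is the restriction to $H$ of a diagonal coefficient of $\pi$. By the definition recalled above, it is a limit, uniformly on compacta of $G$ and hence of $H$, of convex combinations of diagonal coefficients of $\lambda_G$. Restricted to $H$, these are convex combinations of diagonal coefficients of $\lambda_G|_H$. Hence $\pi|_H \prec \lambda_G|_H$.

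The second step identifies $\lambda_G|_H$. Choose a Borel section of $G\to H\backslash G$ (a measurable cross-section exists because $G$ is second countable). This gives a measurable identification $G\cong H\times H\backslash G$ under which Haar measure on $G$ disintegrates as Haar measure on $H$ times a quasi-invariant measure on $H\backslash G$, twisted by a positive density. The modular-function discrepancy is absorbed by a unitary rescaling. Then $L^2(G)|_H$ is unitarily equivalent to $L^2(H)\otimes L^2(H\backslash G)$ with $H$ acting by $\lambda_H\otimes 1$, that is, to a multiple $\infty\cdot\lambda_H$. This is the standard Mackey argument, $\lambda_G|_H\cong \Ind_H^G(\lambda_H)|_H$, and the multiple of $\lambda_H$ is weakly equivalent to $\lambda_H$. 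Combining with the first step gives $\pi|_H\prec\lambda_H$.

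Finally, the type I hypothesis converts weak containment into a statement about support. Because $H$ is type I, $\pi|_H$ has a direct integral decomposition $\pi|_H\cong\int^\oplus_{\widehat H}m(\sigma)\,\sigma\,d\nu(\sigma)$, which is essentially unique, and its support is the closed support of $\nu$ in the Fell topology. Standard Fell theory says that the support of a representation equals the set of irreducibles weakly contained in it. It also says that $\rho\prec\tau$ implies $\mathrm{supp}\,\rho\subset\mathrm{supp}\,\tau$. By definition, $\mathrm{supp}\,\lambda_H$ is the tempered (reduced) dual of $H$. Hence $\mathrm{supp}(\pi|_H)\subset\widehat H_{\mathrm{temp}}$.

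The main obstacle is the measurable identification in the second step. One has to handle non-unimodularity (for example when $H$ is a parabolic or torus-times-unipotent subgroup) and check that the resulting representation really is a multiple of $\lambda_H$, not merely quasi-equivalent to one up to a character twist. I would handle this by the explicit unitary $f\mapsto (h,x)\mapsto f(h s(x))\,\delta(h,x)^{1/2}$, where $\delta$ is the Radon--Nikodym cocycle. Alternatively, one can quote Fell's theorem that restriction of $\Ind_{\{e\}}^G 1$ to $H$ is weakly equivalent to $\Ind_{\{e\}}^H 1$.
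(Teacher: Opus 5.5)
Your proposal is correct and follows the paper's route: $\pi\prec\lambda_G$, restriction to $H$ preserves weak containment, $\lambda_G|_H$ is weakly contained in (indeed a multiple of) $\lambda_H$, and transitivity finishes the argument; the paper gets each step by citing Bekka--de la Harpe--Valette (Prop.~F.3.4, Prop.~F.1.10, Remark~F.1.2(iv)). The only differences are that you sketch the Mackey/Borel-section proof of $\lambda_G|_H\cong\infty\cdot\lambda_H$ instead of citing it, and that you spell out the type~I translation from ``weakly contained in $\lambda_H$'' to ``supported in the tempered dual'', which the paper leaves implicit. On the modular-function worry: with left Haar measure on $G$ and the left $H$-action, the fibre measures are left $H$-invariant, so no modular twist actually arises.
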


\begin{proof}
Since $\pi$ is tempered, it is weakly contained in $L^{2}(G)$. Thus, 
$\pi|_{H}$ is weakly contained in $L^{2}(G)|_{H}$ as unitary representations of $H$; see \cite[Appendix F, Proposition F.3.4]{BHV08}. We also have that $L^{2}(G)|_{H}$ is weakly contained in $L^{2}(H)$ as unitary representations of $H$ by \cite[Proposition F.1.10]{BHV08}. Then according to \cite[Remark F.1.2(iv)]{BHV08}, $\pi|_{H}$ is also weakly contained in $L^{2}(H)$. That is to say, $\pi|_{H}$ is a tempered representation of $H$. 
\end{proof}

\begin{proposition}\label{P:tempered-sphe}
Let $G$ be a connected linear reductive Lie group. If an irreducible unitary representation $\pi$ of $G$ is both spherical and tempered, then it is isomorphic to the irreducible spherical  constituent of a spherical unitary principal series of $G$.  
\end{proposition}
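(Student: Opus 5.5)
The plan is to combine the Harish-Chandra characterization of tempered spherical functions with the classification of unitary spherical representations by their spherical functions.

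Let \(\pi\) be irreducible, unitary, spherical and tempered, and let \(v_o\) be a \(K\)-fixed unit vector. Then \(\phi(g)=\langle \pi(g)v_o,v_o\rangle\) is a positive definite zonal spherical function, so \(\phi=\phi_\lambda\) for some \(\lambda\in\goa_\bC^*\). Since an irreducible unitary spherical representation is determined up to isomorphism by its spherical function (GNS construction), it suffices to show \(\mathrm{Re}\,\lambda=0\). Indeed, for \(\lambda\in i\goa^*\) the function \(\phi_\lambda\) is also the spherical matrix coefficient of the spherical constituent of \(\Ind_{MAN}^G(1\otimes e^{\lambda}\otimes 1)\), which is unitarily induced. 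The GNS uniqueness then identifies \(\pi\) with that constituent.

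To show \(\mathrm{Re}\,\lambda=0\), I would use the weak-containment characterization of temperedness.

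1. Since \(\pi\) is weakly contained in \(L^2(G)\), the diagonal coefficient \(\phi_\lambda\) is a uniform-on-compacta limit of convex combinations of coefficients \(f*\tilde f\), \(f\in L^2(G)\).
2. Averaging over \(K\times K\), one may take these \(f\) bi-\(K\)-invariant.
3. By Cowling--Haagerup--Howe, this weak containment is equivalent to \(\phi_\lambda\) satisfying the Harish-Chandra bound \(|\phi_\lambda(g)|\le C\,\Xi(g)^{1}(1+\sigma(g))^N\), or at least \(\phi_\lambda\in L^{2+\epsilon}(G)\) for every \(\epsilon>0\). Either form can be quoted.
4. Then invoke Harish-Chandra's asymptotics for \(\phi_\lambda\) on \(A^+\). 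If \(\mathrm{Re}\,\lambda\neq 0\), we may assume after a Weyl conjugation that \(\mathrm{Re}\,\lambda\) is dominant and nonzero. Then \(\phi_\lambda(\exp H)\) grows like \(e^{(\mathrm{Re}\lambda-\rho)(H)}\) along regular directions, up to polynomial factors.
5. Integrating against the volume density \(\asymp e^{2\rho(H)}\) on \(A^+\) shows that \(\phi_\lambda\notin L^{2+\epsilon}\) for small \(\epsilon\). This contradicts step 3.

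Equivalently, one can cite Helgason's characterization: \(\phi_\lambda\) satisfies the \(\Xi\)-bound if and only if \(\mathrm{Re}\,\lambda\) lies in the Weyl-orbit of \(0\), which forces \(\mathrm{Re}\,\lambda=0\).

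The main obstacle is step 4, namely the lower bound on the growth of \(\phi_\lambda\) when \(\mathrm{Re}\,\lambda\neq0\), because the leading \(c\)-function term may vanish for non-generic \(\lambda\). I would handle this with the standard estimate \(\phi_{\mathrm{Re}\lambda}\ge |\phi_\lambda|\) in reverse. Namely, I would use the integral formula \(\phi_\lambda(a)=\int_K e^{(\lambda-\rho)(H(ak))}\,dk\) evaluated at real \(\lambda\), together with the fact that a positive definite \(\phi_\lambda\) has \(\bar\lambda\in W\lambda\) up to sign. Alternatively, I would simply cite the known theorem (Helgason, Groups and Geometric Analysis, Ch. IV, Thm. 8.1, combined with Cowling--Haagerup--Howe) rather than redoing the asymptotics.
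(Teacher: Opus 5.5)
Your reduction is sound and takes a different route from the paper. The paper places $\pi$ inside some $I_P(\sigma,\mu)$ with $\sigma$ a discrete series of $M$ and $\mu$ unitary (Wallach, Prop.~5.2.5). Frobenius reciprocity then forces $\sigma$ to have a $(K\cap M)$-fixed vector, hence $M$ compact, $\sigma$ trivial and $P$ minimal. You instead use GNS uniqueness to reduce everything to showing $\mathrm{Re}\,\lambda=0$ for the positive definite spherical function $\phi_\lambda$ of $\pi$.

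\textbf{The gap.} That claim, your step~4, is the whole content of the argument, and none of your justifications establishes it.
\begin{itemize}
\item The inequality $|\phi_\lambda|\le\phi_{\mathrm{Re}\lambda}$ has no reverse, and a lower bound on $|\phi_\lambda|$ is exactly what you need.
\item The relation $-\bar\lambda\in W\lambda$ does not reduce you to real parameters. Positive definite $\phi_\lambda$ with $\mathrm{Re}\,\lambda\neq0\neq\mathrm{Im}\,\lambda$ exist: for example, on $\SL_4(\bR)$ the parameter $(s+it,s-it,-s+it,-s-it)$ with $t\neq0$, $0<s<\tfrac12$, coming from a representation unitarily induced from two twisted complementary series of $\GL_2(\bR)$. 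For such $\lambda$ the integrand $e^{(\lambda-\rho)(H(ak))}$ oscillates, so the positivity argument available for real $\lambda$ is gone.
\item Theorem~IV.8.1 of Helgason's book is the Helgason--Johnson characterization of \emph{bounded} spherical functions. In the present normalization it only gives $\mathrm{Re}\,\lambda\in\operatorname{conv}(W\rho)$, and combining it with Cowling--Haagerup--Howe does not yield $\mathrm{Re}\,\lambda=0$.
\item As a pointwise lower bound, your growth claim fails when $\xi=\mathrm{Re}\,\lambda$ is dominant but singular, as in the example above. There the leading part of the expansion is $e^{(\xi-\rho)(H)}\sum_{w\in W_\xi}c(w\lambda)e^{(w\lambda-\xi)(H)}$, an oscillating sum that can vanish, and for non-generic $\lambda$ polynomial terms enter. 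The vanishing of $c$-coefficients you worry about is not the issue, since $c(w\lambda)\neq0$ by Gindikin--Karpelevich when $\mathrm{Re}\,w\lambda$ is dominant. An honest asymptotic proof would need a mean-square lower bound for this sum on cones, plus a separate treatment of non-generic $\lambda$.
\end{itemize}

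\textbf{A cleaner fix that avoids asymptotics.}
\begin{itemize}
\item For $u\in C_c(K\backslash G/K)$ one has $\pi(u)v_o=\big(\int_G u\,\phi_\lambda\big)v_o$.
\item Weak containment in $L^2(G)$ therefore gives $\big|\int_G u\,\phi_\lambda\big|\le\|u\|_{\mathrm{op}}$, where $\|u\|_{\mathrm{op}}$ is the norm of convolution by $u$ on $L^2(G)$.
\item The spherical Plancherel theorem for $G/K$, whose Plancherel measure has full support on $i\goa^*/W$, gives $\|u\|_{\mathrm{op}}=\sup_{\nu\in i\goa^*}|\widehat u(\nu)|$.
\item Hence the character $u\mapsto\int_G u\,\phi_\lambda$ extends to the operator-norm closure of $C_c(K\backslash G/K)$, which by Stone--Weierstrass is $C_0(i\goa^*/W)$.
\item Characters of $C_0(i\goa^*/W)$ are point evaluations, so $\phi_\lambda=\phi_{\nu_0}$ for some $\nu_0\in i\goa^*$. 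Thus $\lambda\in W\nu_0$ and $\mathrm{Re}\,\lambda=0$.
\end{itemize}

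Completed this way, your route needs only the spherical Plancherel theorem and GNS uniqueness. The paper instead appeals to the realization of tempered representations via discrete series of $M$, together with the non-sphericity of discrete series of non-compact $M$. This fix also removes the finite-center assumption hidden in your use of Cowling--Haagerup--Howe and $L^{2+\epsilon}$. That assumption does not hold verbatim for the reductive groups with non-compact center allowed in the statement.
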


\begin{proof}
As $\pi$ is tempered, there exists a cuspidal parabolic subgroup $P=MAN$, a discrete series $\sigma$ of $M$ and a unitary character $e^{\mu}$ of $A$ such that $\pi$ is an irreducible constituent of \[I_{P}(\sigma,\mu):=\Ind_{MAN}^{G}(\sigma\otimes e^{\mu}\otimes\mathbf{1}_{N});\] see 
\cite[Proposition 5.2.5]{W1}. Since $\pi$ is spherical, 
\[I_{P}(\sigma,\mu)|_{K}=\Ind_{K\cap M}^{K}(\sigma|_{K\cap M})\] contains the trivial representation of $K$. This forces that 
$\sigma_{K\cap M}$ contains the trivial representation of $K\cap M$. As $\sigma$ is a discrete series, it must hold that $M$ is compact and $\sigma$ is the trivial representation. Hence $P=MAN$ is a minimal parabolic subgroup and $I_{P}(\sigma,\mu)$ is a spherical unitary principal series. Moreover, $\pi$ is isomorphic to the unique irreducible spherical constituent of
$I_{P}(\sigma,\mu)$.  
\end{proof}

\section{The \(\GL_n\)-case} \label{Sec3}

In this section we begin the proof of the explicit recursive formula for \(\SL_n(F)\). It is convenient to work first with
\[
G_n=\GL_n(F),
\]
and then pass to
\[
G_n'=\SL_n(F).
\]
We write \(K_n<G_n\) for the standard maximal compact subgroup, \(A_n<G_n\) for the diagonal torus, and
\[
\overline B_n=A_n\overline N_n
\]
for the lower triangular Borel subgroup. As in the preliminaries, when Fourier transforms are taken on diagonal tori we suppress compact diagonal factors. Thus, in the non-Archimedean case, \(A_n\) denotes the diagonal split direction modulo \(A_n\cap K_n\), while in the Archimedean case it denotes the identity component when a Fourier transform is involved. For algebraic constructions, such as the definition of Borel subgroups and principal series, \(A_n\) denotes the full diagonal torus in \(G_n\). \\

Let
\[
\frak I=
\begin{cases}
 i\bR, & F \text{ Archimedean},\\
 i\bR/(2\pi i/\log q_F)\bZ, & F \text{ non-Archimedean}.
\end{cases}
\]

\subsection{Principal series and spherical vectors}

For \(\lambda=(\lambda_1,\ldots,\lambda_n)\in\frak I^n\), let \(\chi_\lambda\) be the spherical unitary character of \(A_n\) given by
\[
\chi_\lambda(\operatorname{diag}(a_1,\ldots,a_n))
=
\prod_{j=1}^n |a_j|_F^{\lambda_j}.
\]
We denote by
\[
\pi_\lambda=\Ind_{\overline B_n}^{G_n}(\chi_\lambda)
\]
the normalized spherical principal series representation
with $G_n$ acting
by the regular action from the right, $g.f(x)=f(xg)$.
Denote by $\pi_\lambda^\infty$ the subspace of smooth vectors.
Let \(f_\lambda\in\pi_\lambda\) be the normalized \(K_n\)-fixed vector such that $f_\lambda|_{K_n}\equiv 1$.

We pair \(\pi_\lambda\) with \(\pi_{-\lambda}\) by
\[
\langle f,f^\vee\rangle
=
\int_{K_n} f(k)f^\vee(k)\,dk,
\]
where \(dk\) denotes the Haar probability measure on \(K_n\). 
It can also be written as
\begin{equation}
\label{B-inte}
\langle f,f^\vee\rangle
=
\int_{
\mathcal B_n
} f(x)f^\vee(x) \Delta(x)dx
\end{equation}
where
\[
\mathcal B_n
=\overline B_n\backslash G_n
\]
is the flag variety of \(G_n\) and 
$\Delta(x)dx$ is
a $G_n$-quasi-invariant probability measure.
The corresponding normalized spherical function is
\[
\Phi_\lambda^{(n)}(g)
=
\langle g.f_\lambda,f_{-\lambda}\rangle,
\qquad g\in G_n.
\]

For \(t\in\frak I\), let \(\mathbf 1=(1,\ldots,1)\). Then
\[
\chi_{\lambda+t\mathbf 1}
=
\chi_\lambda|\det|_F^t,
\]
and hence
\[
\Phi_{\lambda+t\mathbf 1}^{(n)}(g)
=
|\det g|_F^t\Phi_\lambda^{(n)}(g),
\qquad g\in G_n.
\]
It follows that the restriction of \(\Phi_\lambda^{(n)}\) to \(G_n'\) depends only on the class of \(\lambda\) in
\[
\frak I^n/\frak I,
\]
where \(\frak I\) is embedded diagonally. We denote the resulting spherical function on \(G_n'\) by
\[
\phi_{\bar\lambda}^{(n)}
=
\Phi_\lambda^{(n)}\big|_{G_n'},
\qquad
\bar\lambda\in \frak I^n/\frak I.
\]

\subsection{Rankin--Selberg integrals}

Fix a non-trivial additive character \(\psi:F\to\bC^\times\) as in \cite[2.1.2]{Hu}. Let \(U_n<G_n\) denote the upper triangular unipotent subgroup, and define
\[
\psi_n:U_n\to\bC^\times,
\qquad
\psi_n(u)=\psi\left(\sum_{i=1}^{n-1}u_{i,i+1}\right).
\]
For \(f\in\pi_\lambda\), we define the Whittaker function by the Jacquet integral
\begin{equation}
\label{whitake-n}
W_f(g)
=
\int_{U_n} f(ug)\overline{\psi_n(u)}\,du,
\qquad g\in G_n,
\end{equation}
in the sense of holomorphic continuation; see \cite[Theorem 15.4.1]{W2}. Here \(du\) denotes the Haar measure on $U_n$ given by the product of the self-dual measure on $F$ with respect to $\psi$.

For the normalized spherical vector \(f_\lambda\in\pi_\lambda\), we have
\begin{equation}\label{eq:whittaker-normalization}
W_{f_\lambda}|_{K_n}
\equiv
\frac{1}
{\prod_{1\leq i<j\leq n}\zeta_F(1+\lambda_j-\lambda_i)}.
\end{equation}
This spherical Whittaker function is right \(K_n\)-invariant and has the displayed constant value on \(K_n\); see \cite{Hu}. \\

Assume now that \(n\geq 2\), and let
\(
\nu=(\nu_1,\ldots,\nu_{n-1})\in\frak I^{n-1}.
\)
For \(f'\in\pi_\nu\), we write
\begin{equation}
    \label{whitake-n-1}
W_{f'}(g)
=
\int_{U_{n-1}} f'(ug)\psi_{n-1}(u)\,du,
\qquad g\in G_{n-1},
\end{equation}
again in the sense of holomorphic continuation.
Here for the \(G_{n-1}\)-representation we use the opposite Whittaker character
instead of
(\ref{whitake-n})
for $G_n$, as is customary in the Rankin--Selberg integral below. 
If  \(f=f_\nu\in\pi_\nu\)
is spherical we use the analogous normalization in rank \(n-1\). \\

The Rankin--Selberg integral is
\[
Z(s,W_f,W_{f'})
=
\int_{U_{n-1}\backslash G_{n-1}}
W_f\left(
\begin{bmatrix}
g&0\\
0&1
\end{bmatrix}
\right)
W_{f'}(g)
|\det g|_F^{s-\frac12}\,dg,
\]
where the quotient measure on $U_{n-1}\backslash G_{n-1}$ is fixed as in \cite[2.1.4]{Hu}.
By \cite{JPSS,J}, this integral converges absolutely for \(\operatorname{Re}(s)\) sufficiently large and admits meromorphic continuation to \(s\in\bC\).

We shall use that the spherical Whittaker functions are test vectors for the Rankin--Selberg integral. More precisely, by \cite{JPSS81,St,Hu},
\begin{equation}\label{eq:RS-spherical}
Z(s,W_{f_\lambda},W_{f_\nu})
=
W_{f_\lambda}(1_n)W_{f_\nu}(1_{n-1})
L(s,\pi_\lambda\times\pi_\nu),
\end{equation}
where
\[
L(s,\pi_\lambda\times\pi_\nu)
=
\prod_{i=1}^{n}\prod_{k=1}^{n-1}
\zeta_F(s+\lambda_i+\nu_k).
\]
We shall also use the following adjoint \(L\)-factor:
\[
L(s,\pi_\lambda,\mathrm{Ad})
=
\prod_{i,j=1}^{n}
\zeta_F(s+\lambda_i-\lambda_j).
\]
With this convention the diagonal factors contribute powers of \(\zeta_F(s)\), which are independent of \(\lambda\) and will always be absorbed into constants depending only on \(n\).

\subsection{Spherical periods}
Consider 
diagonal right 
action 
of \(G_{n-1}\) 
on \[ 
\mathcal B_n\times \mathcal B_{n-1}.
\]
By \cite[Lemma 1.1]{LLSS} it 
has a unique open orbit. We choose representatives
\[
(\overline B_n z_n,\overline B_{n-1}z_{n-1})
\]
for this orbit, where the matrices \(z_m\in \GL_m(\bZ)\) are defined recursively by
\(z_1=[1]=1,
\)
and, for \(m\geq 2\),
\[
z_m
=
\begin{bmatrix}
w_{m-1}&0\\
0&1
\end{bmatrix}
\begin{bmatrix}
z_{m-2}^{-1}&0\\
0&1_2
\end{bmatrix}
\begin{bmatrix}
z_{m-1}^{\rm t}w_{m-1}z_{m-1}&e_{m-1}^{\rm t}\\
0&1
\end{bmatrix}.
\]
Here \(w_m\) denotes the \(m\times m\) anti-diagonal permutation matrix, \(e_m=[0\ \cdots\ 0\ 1]\), and \({}^{\rm t}\) denotes transpose. The stabilizer of
\[
(\overline B_n z_n,\overline B_{n-1}z_{n-1})
\]
in \(G_{n-1}\) is trivial.

The following lemma was communicated to us by Binyong Sun.

\begin{lemma}\label{lem:Sun}
Let \(\lambda\in\frak I^n\) and \(\nu\in\frak I^{n-1}\). There is a constant
\(\kappa_n>0\), independent of \(\lambda\) and \(\nu\), such that
\[
\begin{aligned}
\langle f,f^\vee\rangle \langle f',f'^\vee\rangle
&=
\kappa_n
\int_{G_{n-1}}
f\left(
z_n
\begin{bmatrix}
h&0\\
0&1
\end{bmatrix}
\right)
f^\vee\left(
z_n
\begin{bmatrix}
h&0\\
0&1
\end{bmatrix}
\right) \\
&\hspace{35mm}\cdot
f'(z_{n-1}h)f'^\vee(z_{n-1}h)\,dh
\end{aligned}
\]
for all pairs
\(
(f, 
f^\vee)\in
\pi_\lambda^\infty
\times 
\pi_{-\lambda}^\infty
\)
and \(
(f',
f'^\vee)\in
\pi_\nu^\infty
\times
\pi_{-\nu}^\infty
\)
in the principal series of 
\(G_{n}\) and
\(G_{n-1}\) respectively.
\end{lemma}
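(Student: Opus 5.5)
The plan is to read the identity as a change of variables from the product of flag varieties to the open $G_{n-1}$-orbit, with the Jacobian absorbed by the induced-representation equivariance.

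Write $X=\mathcal B_n\times\mathcal B_{n-1}$. By \eqref{B-inte}, the left side is the integral over $X$ of
\[
F(x,y)=f(x)f^\vee(x)\,f'(y)f'^\vee(y)
\]
against the product quasi-invariant measure. The key structural fact is that $\pi_\lambda$ is normalized induction and $\pi_{-\lambda}$ is its contragredient. Hence the unramified characters cancel in $f(bx)f^\vee(bx)$, and the product $f f^\vee$ transforms under $\overline B_n$ by exactly the modulus character $\delta_{\overline B_n}$. It is therefore a density on $\mathcal B_n$, i.e.\ a section of the bundle of densities. The pairing $\langle f,f^\vee\rangle$ is the canonical $G_n$-invariant integral of this density, and this description does not depend on $\lambda$. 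The same holds on $\mathcal B_{n-1}$. Consequently $F$ is a smooth density on $X$, and the left side equals $\int_X F$, the canonical integral. This integral is invariant under the diagonal action of $G_{n-1}$, with $G_{n-1}$ embedded in $G_n$ as in the statement.

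Next, use \cite[Lemma 1.1]{LLSS}: the diagonal action of $G_{n-1}$ on $X$ has a unique open orbit $\mathcal O$. By the discussion preceding the lemma, the stabilizer of $x_0=(\overline B_nz_n,\overline B_{n-1}z_{n-1})$ is trivial. Moreover $\dim X = n(n-1)/2+(n-1)(n-2)/2=(n-1)^2=\dim G_{n-1}$. So the orbit map
\[
h\mapsto x_0\cdot h=\left(\overline B_n z_n\begin{bmatrix}h&0\\0&1\end{bmatrix},\ \overline B_{n-1}z_{n-1}h\right)
\]
is a diffeomorphism (respectively a homeomorphism of $F$-analytic manifolds) from $G_{n-1}$ onto $\mathcal O$. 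The complement $X\setminus\mathcal O$ is a finite union of lower-dimensional orbits, in fact a proper Zariski-closed subset. It therefore has measure zero for any smooth density, and $\int_X F=\int_{\mathcal O}F$.

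Now pull $F$ back along the orbit map. The pullback of a density on $\mathcal O$ to $G_{n-1}$ is a density on $G_{n-1}$. Equivariance of the orbit map, together with the $G_{n-1}$-equivariance of how densities are built from $ff^\vee f'f'^\vee$, shows that
\[
F\bigl(x_0\cdot h\bigr)\;\text{(evaluated via the representatives)}=f\!\left(z_n\begin{bmatrix}h&0\\0&1\end{bmatrix}\right)f^\vee\!\left(z_n\begin{bmatrix}h&0\\0&1\end{bmatrix}\right)f'(z_{n-1}h)f'^\vee(z_{n-1}h)
\]
times a fixed density on $G_{n-1}$. That fixed density is the pullback of the value of the canonical density at $x_0$, and it is right-invariant, so it equals $\kappa_n\,dh$.

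The point to verify, and the main obstacle, is that $\kappa_n$ does not depend on $(\lambda,\nu)$. This follows because the identification of $ff^\vee$ with a density is independent of the unitary parameter: the characters cancel exactly, so only $\delta_{\overline B}$ remains. Concretely, I would check that the trivialization of the density bundle at the point $\overline B_n z_n$ is given by evaluating the function at the representative $z_n$. I would also confirm positivity of $\kappa_n$ by testing with $f=f^\vee$ nonnegative in the $\lambda=0$ case.

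The remaining analytic issue is absolute convergence of the right-hand integral, so that the change of variables is legitimate. For unitary parameters $|ff^\vee|$ is again a density of the same type, namely $|f|^2$ up to conjugation. So the same computation applied to $|f||f^\vee||f'||f'^\vee|$, a continuous density on the compact space $X$, gives finiteness and justifies Fubini/Tonelli.
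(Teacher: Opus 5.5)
Your proposal is correct and follows essentially the same route as the paper. You read $(ff^\vee)\otimes(f'f'^\vee)$ as a smooth density on $\mathcal B_n\times\mathcal B_{n-1}$, since the unitary characters cancel and leave only the modulus character. You discard the measure-zero complement of the unique open $G_{n-1}$-orbit with trivial stabilizer from \cite[Lemma 1.1]{LLSS}, and pull back along the orbit map to get $Q(h)$ times a right-invariant, parameter-independent multiple of $dh$. Your dimension count, the equivariance argument for right-invariance of the pulled-back reference density, and the absolute-convergence check via the continuous density $|f||f^\vee||f'||f'^\vee|$ spell out steps the paper states only briefly.
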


\begin{proof}
We use the induced models for the two groups. Thus \(f\) and \(f^\vee\) are
smooth functions on \(G_n\), with prescribed left equivariance under
\(\overline B_n\), while \(f'\) and \(f'^\vee\) are smooth functions on
\(G_{n-1}\), with prescribed left equivariance under
\(\overline B_{n-1}\).

Since \(f\in\pi_\lambda\) and \(f^\vee\in\pi_{-\lambda}\), the inducing
characters cancel in the product \(ff^\vee\).
More precisely, because we use
normalized induction, the product \(ff^\vee\) transforms under \(\overline B_n\)
by the density character for the quotient
\(
\mathcal B_n=\overline B_n\backslash G_n.
\)
Hence \(ff^\vee\) defines a smooth $L^1$-density on \(\mathcal B_n\). Similarly,
since \(f'\in\pi_\nu\) and \(f'^\vee\in\pi_{-\nu}\), now for \(G_{n-1}\), the
product \(f'f'^\vee\) defines a smooth density on
\[
\mathcal B_{n-1}=\overline B_{n-1}\backslash G_{n-1}.
\]
Here a smooth density means a smooth section of the density bundle; in local
analytic coordinates it is of the form
\[
\delta(x)|dx_1\cdots dx_d|,
\]
with \(\delta\) smooth, and in the non-Archimedean case locally constant; the
$L^1$-integrability
is true for all $(f, f^\vee)\in 
(\pi_{\lambda}\times
\pi_{\lambda})
$ by 
(\ref{B-inte}) and
the Cauchy-Schwartz 
inequality.

Thus
\[
(ff^\vee)\otimes(f'f'^\vee)
\]
defines a smooth
$L^1$-density on
\[
X=\mathcal B_n\times\mathcal B_{n-1}.
\]
With our fixed normalizations of measures, the product of the standard pairings
is a positive constant multiple of the integral of this density over \(X\):
\[
\langle f,f^\vee\rangle\langle f',f'^\vee\rangle
=
C_0
\int_X (ff^\vee)\otimes(f'f'^\vee),
\]
where \(C_0>0\) depends only on the normalizations of the measures on the flag
varieties and on \(K_n,K_{n-1}\). In particular, \(C_0\) is independent of
\(\lambda\) and \(\nu\).

The varieties \(\mathcal B_n\) and \(\mathcal B_{n-1}\) are flag varieties,
hence smooth and irreducible. Therefore
\[
X=\mathcal B_n\times\mathcal B_{n-1}
\]
is smooth and irreducible. By \cite[Lemma 1.1]{LLSS}, the diagonal action of
\(G_{n-1}\) on \(X\) has a unique open orbit, represented by
\[
(\overline B_n z_n,\overline B_{n-1}z_{n-1}),
\]
and the stabilizer of this point is trivial. Let \(X^\circ\subset X\) denote
this open orbit. Since \(X^\circ\) is nonempty and Zariski open in the
irreducible variety \(X\), it is Zariski dense. Hence
\[
X\setminus X^\circ
\]
is a proper Zariski-closed subset of positive codimension.

We use the standard fact that the \(F\)-points of a positive-codimension
Zariski-closed subset of a smooth \(F\)-variety have measure zero with respect
to every smooth density. Therefore
\[
\int_X (ff^\vee)\otimes(f'f'^\vee)
=
\int_{X^\circ} (ff^\vee)\otimes(f'f'^\vee).
\]
Since the stabilizer of
\[
(\overline B_n z_n,\overline B_{n-1}z_{n-1})
\]
is trivial, the map
\[
j:G_{n-1}\longrightarrow X^\circ,
\qquad
j(h)=
\left(
\overline B_n z_n
\begin{bmatrix}
h&0\\
0&1
\end{bmatrix},
\overline B_{n-1}z_{n-1}h
\right)
\]
identifies \(G_{n-1}\) with the open orbit.

Pulling the density \((ff^\vee)\otimes(f'f'^\vee)\) back by \(j\), and comparing
it with the fixed Haar measure \(dh\) on \(G_{n-1}\), gives
\[
j^*\big((ff^\vee)\otimes(f'f'^\vee)\big)
=
C_1\, Q(h) dh, 
\, Q(h):=
f\left(
z_n
\begin{bmatrix}
h&0\\
0&1
\end{bmatrix}
\right)
f^\vee\left(
z_n
\begin{bmatrix}
h&0\\
0&1
\end{bmatrix}
\right)
f'(z_{n-1}h)f'^\vee(z_{n-1}h),
\]
where \(C_1>0\) depends only on the normalization of densities and on the Haar
measure \(dh\). Thus
\[
\int_X (ff^\vee)\otimes(f'f'^\vee)
=
C_1
\int_{G_{n-1}}
Q(h)\,dh .
\]

Combining the preceding identities and setting
\[
\kappa_n=C_0C_1,
\]
we obtain
\[
\langle f,f^\vee\rangle \langle f',f'^\vee\rangle
=
\kappa_n
\int_{G_{n-1}}
Q(h)\,dh .
\]
The constant \(\kappa_n\) is positive and is independent of \(\lambda\) and
\(\nu\), because after passing to densities on the flag varieties the inducing
characters have cancelled and only the fixed measure normalizations remain.
Taking \(f^\vee=\overline f\) and \(f'^\vee=\overline{f'}\) also makes the
positivity of \(\kappa_n\) evident.
\end{proof}

For \(f\in\pi_\lambda\) and \(f'\in\pi_\nu\), define the open-orbit integral
\begin{equation}
\label{Lambda}
\Lambda(f,f')
=
\int_{G_{n-1}}
f\left(
z_n
\begin{bmatrix}
g&0\\
0&1
\end{bmatrix}
\right)
f'(z_{n-1}g)\,dg .
\end{equation}
By \cite[Proposition 1.4]{LLSS}, this integral converges absolutely.

We write
\[
\iota_{n-1}:G_{n-1}\longrightarrow G_n,
\qquad
\iota_{n-1}(g)=
\begin{pmatrix}
g&0\\
0&1
\end{pmatrix}.
\]
For \(\lambda\in\frak I^n\) and \(\nu\in\frak I^{n-1}\), we consider the spherical period
\begin{equation} \label{period}
\mathcal P_n(\lambda,\nu)
=
\int_{G_{n-1}}
\Phi_\lambda^{(n)}(\iota_{n-1}(g))
\Phi_\nu^{(n-1)}(g)\,dg,
\end{equation}
which converges absolutely by \cite[Proposition 2.1]{H}.

The following theorem computes this period. In the non-Archimedean case, this is given in \cite[Theorem 2.12]{H} as the unramified calculation of the refined Gan-Gross-Prasad conjecture 
(see also \cite{GGP}) for unitary groups at split places. Our proof is based on \cite{LLSS} and is uniform for all local fields, and the proof also gives the absolute convergence of the defining integral
\eqref{period}.
 
\begin{theorem}
\label{thm:bessel-period}
For \(\lambda\in\frak I^n\) and \(\nu\in\frak I^{n-1}\), the integral defining
\(\mathcal P_n(\lambda,\nu)\) converges absolutely. Moreover, there is a constant \(\kappa'_n>0\), independent of \(\lambda\) and \(\nu\), such that
\[
\mathcal P_n(\lambda,\nu)
=
\kappa'_n
\frac{
L\left(\frac12,\pi_\lambda\times\pi_\nu\right)
L\left(\frac12,\pi_{-\lambda}\times\pi_{-\nu}\right)
}{
L(1,\pi_\lambda,\mathrm{Ad})
L(1,\pi_\nu,\mathrm{Ad})
}.
\]
Equivalently, after absorbing the diagonal factors into the constant,
\[
\mathcal P_n(\lambda,\nu)
=
\kappa'_n
\frac{
\prod_{i=1}^{n}\prod_{k=1}^{n-1}
\zeta_F(\frac12+\lambda_i+\nu_k)
\zeta_F(\frac12-\lambda_i-\nu_k)
}{
\prod_{i\neq j}
\zeta_F(1+\lambda_i-\lambda_j)
\prod_{k\neq l}
\zeta_F(1+\nu_k-\nu_l)
}.
\]
\end{theorem}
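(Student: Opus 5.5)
The plan is to unfold the period $\mathcal P_n(\lambda,\nu)$ using Lemma~\ref{lem:Sun}, so that it factors as the product of two open-orbit integrals. Each of these will then be evaluated through the Rankin--Selberg unramified computation \eqref{eq:RS-spherical}, following \cite{LLSS}.

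\textbf{Step 1: unfolding the period.} Write
\[
\Phi_\lambda^{(n)}(\iota_{n-1}(g))=\langle \iota_{n-1}(g).f_\lambda,f_{-\lambda}\rangle,
\qquad
\Phi_\nu^{(n-1)}(g)=\langle g.f_\nu,f_{-\nu}\rangle .
\]
Then
\[
\Phi_\lambda^{(n)}(\iota_{n-1}(g))\,\Phi_\nu^{(n-1)}(g)
=\langle \iota_{n-1}(g).f_\lambda,f_{-\lambda}\rangle\,\langle g.f_\nu,f_{-\nu}\rangle .
\]
Apply Lemma~\ref{lem:Sun} to the pair $(\iota_{n-1}(g).f_\lambda,f_{-\lambda})$ and the pair $(g.f_\nu,f_{-\nu})$. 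This expresses the product as
\[
\kappa_n\int_{G_{n-1}} f_\lambda\bigl(z_n\iota_{n-1}(hg)\bigr)\,f_\nu(z_{n-1}hg)\,f_{-\lambda}\bigl(z_n\iota_{n-1}(h)\bigr)\,f_{-\nu}(z_{n-1}h)\,dh .
\]
Integrate over $g$ and substitute $g\mapsto h^{-1}g$. Formally the double integral factors as
\[
\mathcal P_n(\lambda,\nu)=\kappa_n\,\Lambda(f_\lambda,f_\nu)\,\Lambda(f_{-\lambda},f_{-\nu}),
\]
with $\Lambda$ as in \eqref{Lambda}. To justify Fubini, and with it the absolute convergence claimed in the theorem, I would run the same computation with $|f_\lambda|=f_0$, which holds since $\lambda$ is unitary, so $|f_{\pm\lambda}|$ and $|f_{\pm\nu}|$ are the spherical vectors at parameter $0$. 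The integrand is then nonnegative and Tonelli applies. This bounds $\int|\Phi_\lambda^{(n)}\Phi_\nu^{(n-1)}|$ by $\kappa_n\Lambda(f_0,f_0)^2$, and $\Lambda(f_0,f_0)<\infty$ by \cite[Proposition 1.4]{LLSS}. I do not expect this step to be hard.

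\textbf{Step 2: evaluating the open-orbit integral.} The next task is to compute $\Lambda(f_\lambda,f_\nu)$. Both the $G_{n-1}$-invariant functional $\Lambda$ on $\pi_\lambda\otimes\pi_\nu$ and the Rankin--Selberg functional $Z(\tfrac12,W_f,W_{f'})$ are invariant under $G_{n-1}$. For generic $(\lambda,\nu)$ the space of such functionals is one-dimensional (multiplicity one, which is available in \cite{LLSS}). Hence
\[
\Lambda(f,f')=c(\lambda,\nu)\,Z(\tfrac12,W_f,W_{f'}).
\]
To find $c(\lambda,\nu)$, I would use the comparison in \cite{LLSS}. There the constant is obtained by unfolding $\Lambda$ along a Bruhat-type factorization of $z_n$, which turns the $U$-integrals into Jacquet integrals. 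I expect $c(\lambda,\nu)$ to be a constant $c_n$ independent of the parameters, after taking into account the normalizations of $W_f$ chosen in \eqref{whitake-n} and \eqref{whitake-n-1}. Combining this with \eqref{eq:RS-spherical} and \eqref{eq:whittaker-normalization} gives
\[
\Lambda(f_\lambda,f_\nu)=c_n\,\frac{L(\tfrac12,\pi_\lambda\times\pi_\nu)}{\prod_{i<j}\zeta_F(1+\lambda_j-\lambda_i)\prod_{k<l}\zeta_F(1+\nu_l-\nu_k)} .
\]
The analogous formula holds for $\Lambda(f_{-\lambda},f_{-\nu})$. In the product of the two, the denominators combine into $\prod_{i\ne j}\zeta_F(1+\lambda_i-\lambda_j)\prod_{k\ne l}\zeta_F(1+\nu_k-\nu_l)$, which yields the stated formula with $\kappa_n'=\kappa_n|c_n|^2$.

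\textbf{Step 3: extending to all unitary parameters.} The identity is first established for generic parameters and then extended to all of $\frak I^n\times\frak I^{n-1}$ by continuity. Both sides are continuous in the parameters: the right side has no poles on the unitary axes, because the arguments of all the zeta factors have real part $\tfrac12$ or $1$. The left side is continuous by dominated convergence, using the bound from Step 1.

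The main obstacle is Step 2, namely showing that the proportionality constant between $\Lambda$ and the Rankin--Selberg integral is independent of $\lambda$ and $\nu$, including the correct signs of the parameters in the $L$-factor. If citing \cite{LLSS} does not give this directly, I would first verify it for $n=2$ by a direct Tate-integral computation. I would then argue inductively along the recursive definition of $z_n$, peeling off one Jacquet integral at each stage.
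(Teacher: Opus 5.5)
Your Step 1 is sound and is the paper's unfolding $\mathcal P_n(\lambda,\nu)=\kappa_n\Lambda(f_\lambda,f_\nu)\Lambda(f_{-\lambda},f_{-\nu})$. Dominating by the parameter-$0$ sections ($|f_{\pm\lambda}|=f_0$) is a clean way to get absolute convergence.

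The gap is in Step 2. The proportionality factor between $\Lambda$ and $Z(\tfrac12,\cdot,\cdot)$ is \emph{not} independent of $(\lambda,\nu)$. So the identity you plan to prove there is false, and your displayed formula for $\Lambda(f_\lambda,f_\nu)$ has the wrong sign in some of the $L$-factors. Already for $n=2$ this is visible. Take $F$ non-Archimedean, $\psi$ unramified, $z_2=\begin{bmatrix}1&1\\0&1\end{bmatrix}$ and $\mathrm{vol}(\cO_F^\times)=1$. A direct computation gives
\[
\Lambda(f_\lambda,f_\nu)=\frac{\zeta_F(\frac12-\lambda_1-\nu_1)\,\zeta_F(\frac12+\lambda_2+\nu_1)}{\zeta_F(1+\lambda_2-\lambda_1)},
\qquad
Z(\tfrac12,W_{f_\lambda},W_{f_\nu})=\frac{\zeta_F(\frac12+\lambda_1+\nu_1)\,\zeta_F(\frac12+\lambda_2+\nu_1)}{\zeta_F(1+\lambda_2-\lambda_1)}.
\]
The ratio is $\zeta_F(\frac12-\lambda_1-\nu_1)/\zeta_F(\frac12+\lambda_1+\nu_1)$. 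It has modulus one on the unitary axis but is not constant. In general, the input the paper takes from \cite[Theorem 1.6]{LLSS} is
\[
\Lambda(f,f')=\prod_{i+k\le n}\frac{\zeta_F(\frac12-\lambda_i-\nu_k)}{\zeta_F(\frac12+\lambda_i+\nu_k)}\,Z(\tfrac12,W_f,W_{f'}).
\]

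The missing idea is that this factor need not be constant. It only needs to be inverted under $(\lambda,\nu)\mapsto(-\lambda,-\nu)$. The factor attached to $\Lambda(f_{-\lambda},f_{-\nu})$ is exactly the reciprocal, so it cancels in the product $\Lambda(f_\lambda,f_\nu)\Lambda(f_{-\lambda},f_{-\nu})$. What remains is $\kappa_n Z(\frac12,W_{f_\lambda},W_{f_\nu})Z(\frac12,W_{f_{-\lambda}},W_{f_{-\nu}})$, to which \eqref{eq:RS-spherical} and \eqref{eq:whittaker-normalization} apply. This is how the paper concludes.

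With Step 2 corrected in this way, your final formula is right and Step 3 goes through. As written, however, the ``main obstacle'' you identify is an attempt to prove a false statement. Your fallback $n=2$ Tate-integral check would expose this rather than confirm it.
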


\begin{proof}
Let
\[
(f, f^\vee )
\in \pi_\lambda
\pi_{-\lambda},
\qquad
(f', f'^\vee )
\in \pi_\nu \times
\pi_{-\nu}.
\]
Lemma~\ref{lem:Sun},
the absolute
convergence of
\ref{Lambda},
 Fubini's theorem and
a change of variables
give
\[
\int_{G_{n-1}}
\left\langle
\begin{bmatrix}
g&0\\
0&1
\end{bmatrix}. f,
f^\vee
\right\rangle
\langle g. f',f'^\vee\rangle\,dg
=
\kappa_n\Lambda(f,f')\Lambda(f^\vee,f'^\vee).
\]
By \cite[Theorem 1.6]{LLSS}, with the present normalization of spherical vectors and Whittaker functions, we have
\[
\Lambda(f,f')
=
\prod_{i+k\leq n}
\frac{
\zeta_F(\frac12-\lambda_i-\nu_k)
}{
\zeta_F(\frac12+\lambda_i+\nu_k)
}
\,
Z\left(\frac12,W_f,W_{f'}\right),
\]
and
\[
\Lambda(f^\vee,f'^\vee)
=
\prod_{i+k\leq n}
\frac{
\zeta_F(\frac12+\lambda_i+\nu_k)
}{
\zeta_F(\frac12-\lambda_i-\nu_k)
}
\,
Z\left(\frac12,W_{f^\vee},W_{f'^\vee}\right).
\]
By \cite[Proposition 1.4]{LLSS}, the integrals \(\Lambda(f,f')\) and
\(\Lambda(f^\vee,f'^\vee)\) converge absolutely. Applying Lemma~\ref{lem:Sun}
to compactly supported truncations of the open orbit and then passing to the
limit gives
\[
\int_{G_{n-1}}
\left\langle
\begin{bmatrix}
g&0\\
0&1
\end{bmatrix}. f,
f^\vee
\right\rangle
\langle g. f',f'^\vee\rangle\,dg
=
\kappa_n\Lambda(f,f')\Lambda(f^\vee,f'^\vee),
\]
and the integral on the left is absolutely convergent. Here \(\kappa_n>0\)
depends only on the normalizations.

With the present normalization, any scalar discrepancy between the intertwining
normalizations used in \cite{LLSS} and the Whittaker normalization
\eqref{eq:whittaker-normalization} is independent of \(\lambda\) and \(\nu\);
we absorb it into \(\kappa_n\).

Multiplying these identities, the extra zeta ratios cancel, and hence
\[
\int_{G_{n-1}}
\left\langle
\begin{bmatrix}
g&0\\
0&1
\end{bmatrix}. f,
f^\vee
\right\rangle
\langle g. f',f'^\vee\rangle\,dg
=
\kappa_n
Z\left(\frac12,W_f,W_{f'}\right)
Z\left(\frac12,W_{f^\vee},W_{f'^\vee}\right).
\]
We now specialize to spherical vectors:
\[
f=f_\lambda,\qquad
f^\vee=f_{-\lambda},\qquad
f'=f_\nu,\qquad
f'^\vee=f_{-\nu}.
\]
Then the left-hand side is precisely \(\mathcal P_n(\lambda,\nu)\). By the Rankin--Selberg spherical test-vector formula stated in \eqref{eq:RS-spherical},
\[
Z\left(\frac12,W_{f_\lambda},W_{f_\nu}\right)
=
W_{f_\lambda}(1_n)W_{f_\nu}(1_{n-1})
L\left(\frac12,\pi_\lambda\times\pi_\nu\right),
\]
and similarly
\[
Z\left(\frac12,W_{f_{-\lambda}},W_{f_{-\nu}}\right)
=
W_{f_{-\lambda}}(1_n)W_{f_{-\nu}}(1_{n-1})
L\left(\frac12,\pi_{-\lambda}\times\pi_{-\nu}\right).
\]
By \eqref{eq:whittaker-normalization}, evaluated at \(1_n\), we have
\[
W_{f_\lambda}(1_n)
=
\frac{1}
{\prod_{1\leq i<j\leq n}\zeta_F(1+\lambda_j-\lambda_i)}.
\]
Applying the same formula with \(-\lambda\) in place of \(\lambda\), we get
\[
W_{f_{-\lambda}}(1_n)
=
\frac{1}
{\prod_{1\leq i<j\leq n}\zeta_F(1-\lambda_j+\lambda_i)}.
\]
The first product contains the factors with differences \(\lambda_j-\lambda_i\)
for \(i<j\), while the second contains the opposite differences. Hence their
product is
\[
W_{f_\lambda}(1_n)W_{f_{-\lambda}}(1_n)
=
\frac{1}
{\prod_{i\neq j}\zeta_F(1+\lambda_i-\lambda_j)}.
\]
Equivalently, if
\[
L(1,\pi_\lambda,\mathrm{Ad})
=
\prod_{i,j=1}^{n}\zeta_F(1+\lambda_i-\lambda_j),
\]
then
\[
W_{f_\lambda}(1_n)W_{f_{-\lambda}}(1_n)
=
\frac{\zeta_F(1)^n}
{L(1,\pi_\lambda,\mathrm{Ad})}.
\]
The factor \(\zeta_F(1)^n\) is independent of \(\lambda\), and is absorbed into the constant. The same argument, with \(n\) replaced by \(n-1\), gives
\[
W_{f_\nu}(1_{n-1})W_{f_{-\nu}}(1_{n-1})
=
\frac{\zeta_F(1)^{n-1}}
{L(1,\pi_\nu,\mathrm{Ad})}.
\]
Therefore the product of the four spherical Whittaker values contributes
\[
\frac{\kappa_n''}
{
L(1,\pi_\lambda,\mathrm{Ad})
L(1,\pi_\nu,\mathrm{Ad})
},
\]
where \(\kappa_n''>0\) depends only on \(n\) and on the normalizations. Therefore
\[
\mathcal P_n(\lambda,\nu)
=
\kappa'_n
\frac{
L\left(\frac12,\pi_\lambda\times\pi_\nu\right)
L\left(\frac12,\pi_{-\lambda}\times\pi_{-\nu}\right)
}{
L(1,\pi_\lambda,\mathrm{Ad})
L(1,\pi_\nu,\mathrm{Ad})
}
\]
for some \(\kappa'_n>0\) independent of \(\lambda\) and \(\nu\).

Finally, expanding the \(L\)-factors gives
\[
L\left(s,\pi_\lambda\times\pi_\nu\right)
=
\prod_{i=1}^{n}\prod_{k=1}^{n-1}
\zeta_F(s+\lambda_i+\nu_k),
\]
and
\[
L(s,\pi_\lambda,\mathrm{Ad})
=
\prod_{i,j=1}^{n}
\zeta_F(s+\lambda_i-\lambda_j).
\]
Substituting these expressions gives the displayed product formula.
\end{proof}

\begin{remark}\label{rem:Pn}
For unitary parameters, the formula in Theorem~\ref{thm:bessel-period}
can be written in terms of absolute values. Indeed, for our local zeta
functions one has
\[
\overline{\zeta_F(s)}=\zeta_F(\overline{s}).
\]
Hence, if \(\lambda_i,\nu_k\in\frak I\), then
\[
\zeta_F\left(\frac12+\lambda_i+\nu_k\right)
\zeta_F\left(\frac12-\lambda_i-\nu_k\right)
=
\left|\zeta_F\left(\frac12+\lambda_i+\nu_k\right)\right|^2.
\]
Likewise, the factors in the adjoint \(L\)-functions pair as
\[
\zeta_F(1+\lambda_i-\lambda_j)
\zeta_F(1+\lambda_j-\lambda_i)
=
\left|\zeta_F(1+\lambda_i-\lambda_j)\right|^2,
\]
and similarly for the \(\nu\)-variables. The diagonal terms \(i=j\) and
\(k=l\) contribute powers of \(\zeta_F(1)\), which are independent of
\(\lambda\) and \(\nu\), and are absorbed into the positive constant.

Thus, after absorbing positive constants depending only on \(n\) and on the
normalization of Haar measures, the kernel attached to the spherical period is
\begin{equation}
\label{P-n-lam-nu}
P_n(\lambda,\nu)
=
\frac{
\prod_{i=1}^{n}\prod_{k=1}^{n-1}
\left|\zeta_F\!\left(\frac12+\lambda_i+\nu_k\right)\right|^2
}{
\prod_{1\leq i<j\leq n}
\left|\zeta_F(1+\lambda_i-\lambda_j)\right|^2
\prod_{1\leq k<l\leq n-1}
\left|\zeta_F(1+\nu_k-\nu_l)\right|^2
}.
\end{equation}
In the recursive formulas below, all positive constants relating
\(\mathcal P_n\) and \(P_n\) are absorbed into the Haar measures on the parameter
spaces.
\end{remark}

Thus, throughout the rest of this section, we write
\[
P_n(\lambda,\nu)
=
\frac{
\prod_{i=1}^{n}\prod_{k=1}^{n-1}
\left|\zeta_F\!\left(\frac12+\lambda_i+\nu_k\right)\right|^2
}{
\prod_{1\leq i<j\leq n}
\left|\zeta_F(1+\lambda_i-\lambda_j)\right|^2
\prod_{1\leq k<l\leq n-1}
\left|\zeta_F(1+\nu_k-\nu_l)\right|^2
}.
\]


\subsection{The restricted spherical function}

We shall use the spherical Plancherel formula on \(G_{n-1}\). With our normalizations, if
\[
u\in C_c^\infty(K_{n-1}\backslash G_{n-1}/K_{n-1}),
\]
then
\[
u(g)
=
\int_{\frak I^{n-1}/S_{n-1}}
\widehat u(\nu)\,
\Phi_\nu^{(n-1)}(g)\,
\big|{\bf c}^{(n-1)}(\nu)\big|^{-2}\,d\nu,
\]
where
\[
\widehat u(\nu)
=
\int_{G_{n-1}}
u(g)\Phi_{-\nu}^{(n-1)}(g)\,dg,
\]
and
\begin{equation}
\label{HC-c}
{\bf c}^{(n-1)}(\nu)
=
\prod_{1\leq k<l\leq n-1}
\frac{\zeta_F(\nu_k-\nu_l)}
{\zeta_F(1+\nu_k-\nu_l)}
\end{equation}
is the Harish-Chandra
$c$-function.
As usual, changing Haar measures only changes the Plancherel measure by a positive constant depending on \(n\), and such constants will be absorbed below.

We now restrict \(\Phi_\lambda^{(n)}\) to \(G_{n-1}\), embedded in \(G_n\) by
\[
\iota_{n-1}(g)
=
\begin{pmatrix}
g&0\\
0&1
\end{pmatrix}.
\]
Thus we consider the bi-\(K_{n-1}\)-invariant function
\begin{equation}
\label{f-lam}    
F_\lambda(g)
=
\Phi_\lambda^{(n)}(\iota_{n-1}(g)),
\qquad g\in G_{n-1}.
\end{equation}
The function \(F_\lambda\) does not in general belong to the Harish--Chandra Schwartz space of \(G_{n-1}\). Hence the usual Plancherel formula cannot be applied directly. The following proposition gives the corresponding Plancherel expansion. Its coefficients are the spherical periods computed in Theorem~\ref{thm:bessel-period} above.

Before stating the proposition, we now fix the norm convention used in the estimates from this point onward. For 
\[
\xi=(\xi_1,\ldots,\xi_m)\in\frak I^m, 
\]
we write
\[
\|\xi\|=\sum_{j=1}^m |\xi_j|.
\]
In the non-Archimedean case, where
\[
\frak I=i\bR/(2\pi i/\log q_F)\bZ,
\]
we choose representatives in a fixed fundamental domain and use the corresponding quotient distance. The same convention is used for quotient spaces such as \(\frak I^m/\frak I\). Since the non-Archimedean parameter spaces are compact, the precise choices are immaterial for the estimates.

All Haar measures and finite Weyl-group factors are fixed once and for all. Changing these choices only changes the formulas below by positive constants depending on \(n\), and such constants will be absorbed into the normalizations used later.

\begin{proposition}\label{prop:restricted-plancherel}
Let \(\lambda\in\frak I^n\). Then 
the function
$F_\lambda$ in (\ref{f-lam}) is
 given by
\begin{equation}
    \label{inversion}
F_\lambda(g)
=
\int_{\frak I^{n-1}/S_{n-1}}
\Phi_\nu^{(n-1)}(g)
\,
\mathcal P_n(\lambda,-\nu)\,
\big|{\bf c}^{(n-1)}(\nu)\big|^{-2}\,
d\nu,\,
g\in G_{n-1}.
\end{equation}
The integral converges absolutely and locally uniformly in \(g\).
\end{proposition}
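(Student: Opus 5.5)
The plan is to prove the expansion weakly, by pairing against compactly supported bi-$K_{n-1}$-invariant test functions, and then to upgrade to a pointwise identity using absolute convergence and continuity.

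\emph{Step 1: absolute and locally uniform convergence of the right-hand side of \eqref{inversion}.} Since $\nu$ is unitary, $|\Phi_\nu^{(n-1)}(g)|\le \Phi_0^{(n-1)}(g)$, and $\Phi_0^{(n-1)}$ is continuous. It therefore suffices to prove
\[
\int_{\frak I^{n-1}/S_{n-1}}\big|\mathcal P_n(\lambda,-\nu)\big|\,\big|{\bf c}^{(n-1)}(\nu)\big|^{-2}\,d\nu<\infty .
\]
In the non-Archimedean case the domain is compact. By Remark~\ref{rem:Pn}, $\mathcal P_n(\lambda,-\nu)$ is a positive multiple of $P_n(\lambda,-\nu)$. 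All factors $|\zeta_F(\frac12+\cdot)|^2$ and $|\zeta_F(1+\cdot)|^{-2}$ are bounded on unitary parameters. The only singularities come from $|\zeta_F(\nu_k-\nu_l)|^{-2}$, which vanishes rather than blows up on the hyperplanes $\nu_k=\nu_l$. So the integrand is bounded and continuous.

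In the Archimedean case we use Stirling's formula. For $t\in\bR$ we have $|\Gamma(\sigma+it)|\asymp (1+|t|)^{\sigma-1/2}e^{-\pi|t|/2}$, and in $P_n(\lambda,-\nu)\,|{\bf c}^{(n-1)}(\nu)|^{-2}$ the numerator and denominator carry the same exponential weights. Counting them: the numerator has exponential factor $\exp\bigl(-\tfrac{\pi}{2}\sum_{i,k}|\lambda_i-\nu_k|\bigr)$ for $F=\bR$, with the analogous factor for $\bC$. The denominator $|\zeta_F(\nu_k-\nu_l)|^2$ contributes $\exp\bigl(-\tfrac{\pi}{2}\sum_{k<l}|\nu_k-\nu_l|\bigr)$, and the $1+\nu_k-\nu_l$ factors cancel the exponential part of the $P_n$ denominator. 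The net exponent is
\[
-\tfrac{\pi}{2}\Bigl(\textstyle\sum_{i,k}|\lambda_i-\nu_k|-\sum_{i<j}|\lambda_i-\lambda_j|-\sum_{k<l}|\nu_k-\nu_l|\Bigr),
\]
which is $\le 0$ by an interlacing-type inequality. It is strictly negative linear growth in $\nu$ outside a compact region, except in the interlacing cone where it vanishes. On that cone I will use the polynomial Stirling factors: numerator powers $(1+|\cdot|)^{-1/2}$ per factor against the denominator's $(1+|\cdot|)^{+1/2}$ per factor. Checking that these give integrable decay in the $n-1$ directions of $\nu$ is the main obstacle. I would first handle it by a direct inspection of the power count. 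If that fails, I would fall back to positivity plus Fatou, deducing finiteness from the weak identity at $g=e$.

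\emph{Step 2: the weak identity.} For $u\in C_c^\infty(K_{n-1}\backslash G_{n-1}/K_{n-1})$, compute $\int F_\lambda(g)u(g)\,dg$ by inserting the Plancherel inversion for $u$ and applying Fubini. Fubini is justified because $F_\lambda$ is bounded on $\operatorname{supp}u$ and $\widehat u$ is rapidly decreasing, respectively bounded on a compact domain. This yields $\int_\nu \widehat u(\nu)\,\langle F_\lambda,\Phi_\nu^{(n-1)}\rangle_{\rm formal}\,|{\bf c}|^{-2}\,d\nu$, which is not yet justified since $F_\lambda\Phi_\nu$ is only conditionally understood. To avoid this, I would approximate $F_\lambda$ from the other side. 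Write $u=\int \widehat u(\nu)\Phi_\nu|{\bf c}|^{-2}\,d\nu$ and use that $\mathcal P_n(\lambda,\nu)=\int F_\lambda\Phi_\nu\,dg$ converges absolutely by Theorem~\ref{thm:bessel-period}. Then show that $\int F_\lambda\,u\,dg=\int \widehat u(\nu)\,\mathcal P_n(\lambda,-\nu)\,|{\bf c}|^{-2}\,d\nu$ by inserting a cutoff $\chi_R$ (the indicator of a large $K$-invariant ball) on the $g$-side and letting $R\to\infty$. Dominated convergence on the $g$-side uses the absolute convergence from \cite[Proposition 2.1]{H}, dominated by $F_0\,\Phi_0\in L^1$; on the $\nu$-side it uses Step~1.

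\emph{Step 3: pointwise identity.} Both $F_\lambda$ and the right-hand side $R_\lambda$ of \eqref{inversion} are continuous and bi-$K_{n-1}$-invariant, the latter by Step~1. Fubini gives $\int R_\lambda u=\int\widehat u(\nu)\mathcal P_n(\lambda,-\nu)|{\bf c}|^{-2}\,d\nu$, so $F_\lambda-R_\lambda$ integrates to zero against every such $u$. Hence $F_\lambda=R_\lambda$ pointwise.
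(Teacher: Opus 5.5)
Your Steps 2 and 3, and the non-Archimedean half of Step 1, match the paper's proof. The gap is the Archimedean half of Step 1, and the obstacle you single out there is not the real one.

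\textbf{The misdiagnosis.} Write \(L=\sum_{i<j}|\lambda_i-\lambda_j|\), \(N=\sum_{k<l}|\nu_k-\nu_l|\) and \(M=\sum_{i,k}|\lambda_i-\nu_k|\), using imaginary parts. The exponent is then a positive multiple of \(L+N-M\). For fixed \(\lambda\), its zero set consists of the \(\nu\) interlacing \(\lambda\), which is a bounded set. So there is no unbounded cone in the \(\nu\)-directions, and nothing to check with polynomial factors.

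\textbf{What is actually missing.} You need a proof that the exponent decreases linearly in \(\nu\); you assert this but never show it. Summing \(|\nu_k-\nu_l|\le|\nu_k-\lambda_i|+|\lambda_i-\nu_l|\) over \(i\) gives \(N\le(1-\frac2n)M\). Combined with \(L\le(n-1)\|\lambda\|\) and \(M\ge n\|\nu\|-(n-1)\|\lambda\|\), this yields \(M-L-N\ge 2\|\nu\|-C_n\|\lambda\|\). That is exponential decay in \(\nu\), which absorbs every polynomial Stirling factor. This is exactly Lemma~\ref{lem:Yu} feeding Lemma~\ref{lem:period-plancherel-estimate}.

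Your plan to extract decay from the polynomial factors could not have worked in any case. For \(F=\bC\) one has \(|\zeta_\bC(\nu_k-\nu_l)|^{-2}\asymp|\nu_k-\nu_l|\,e^{\pi|\nu_k-\nu_l|}\) for \(|\nu_k-\nu_l|\ge1\), while the numerator factors \(|\zeta_\bC(\frac12+\cdot)|^2\) carry no polynomial part. So the polynomial parts grow in \(\nu\).

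\textbf{Your fallback.} The Fatou argument does close the gap and gives a genuinely different, Stirling-free route. As written, however, it is circular, because Step 2 invokes Step 1 ``on the \(\nu\)-side''. That invocation is unnecessary.

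\emph{Step 2 without Step 1.} Put \(F_0=\Phi_0^{(n)}\circ\iota_{n-1}\). Then \(F_0(g)\Phi_0^{(n-1)}(g)\,|\widehat u(\nu)|\,|\mathbf{c}^{(n-1)}(\nu)|^{-2}\) is an integrable majorant on \(G_{n-1}\times\mathfrak I^{n-1}\). This holds because \(\int F_0\Phi_0^{(n-1)}\,dg=\mathcal P_n(0,0)<\infty\), \(\widehat u\) decays rapidly, and \(|\mathbf{c}^{(n-1)}|^{-2}\) grows polynomially. A single application of Fubini therefore gives \(\int F_\lambda u\,dg=\int\widehat u(-\nu)\mathcal P_n(\lambda,-\nu)|\mathbf{c}^{(n-1)}(\nu)|^{-2}\,d\nu\), with no cutoff.

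\emph{The Fatou step.} Take \(u_j=v_j*v_j^*\), where \(v_j\ge0\) is a bi-\(K_{n-1}\)-invariant approximate identity. Then \(\widehat{u_j}=|\widehat{v_j}|^2\ge0\) and \(\widehat{u_j}\to1\) pointwise; you need this positivity of \(\widehat{u_j}\), not just of the kernel. Since \(\mathcal P_n(\lambda,-\nu)\ge0\) on the unitary axis by Remark~\ref{rem:Pn}, Fatou gives \(\int\mathcal P_n(\lambda,-\nu)|\mathbf{c}^{(n-1)}(\nu)|^{-2}\,d\nu\le F_\lambda(e)=1\). Together with \(|\Phi_\nu^{(n-1)}|\le1\), this yields the absolute and uniform convergence.

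\textbf{Trade-off.} The Fatou route gives only an \(L^1\) bound in \(\nu\), although that bound is uniform in \(\lambda\). The paper's pointwise bound \(C_{n,\varepsilon}e^{\varepsilon\|\lambda\|-\eta_n\varepsilon\|\nu\|}\) is what is needed later in Lemma~\ref{lem:est}. There one integrates over the affine fibers \([\nu]=[\mu^-]\) against \(c(\nu,\mu^-)\), which is only known to be \(O(e^{\varepsilon\|\nu\|})\).
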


The absolute convergence follows from the following estimate, together with
Remark~\ref{rem:Pn}.

\begin{lemma}\label{lem:period-plancherel-estimate}
For every \(0<\varepsilon\leq 1\), there exist \(C_{n,\varepsilon}>0\) and $\eta_n > 0$ such that
\[
P_n(\lambda,-\nu)
\big|{\bf c}^{(n-1)}(\nu)\big|^{-2}
\leq
C_{n,\varepsilon}
\exp\left(
\varepsilon\|\lambda\|-\eta_n\varepsilon\|\nu\|
\right)
\]
for all \(\lambda\in\frak I^n\) and \(\nu\in\frak I^{n-1}\).
\end{lemma}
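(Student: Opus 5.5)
The plan is to treat the non-Archimedean and Archimedean cases separately. In both cases I bound the kernel factor by factor, using the explicit formula \eqref{P-n-lam-nu} for $P_n$ and \eqref{HC-c} for ${\bf c}^{(n-1)}$.

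\emph{Non-Archimedean case.} Every parameter lies in a compact torus. The numerator factors satisfy $|\zeta_F(\tfrac12+it)|\le (1-q_F^{-1/2})^{-1}$. The denominator factors satisfy $|\zeta_F(1+it)|^{-1}=|1-q_F^{-1-it}|\le 2$. For the Plancherel density,
\[
\bigl|{\bf c}^{(n-1)}(\nu)\bigr|^{-2}=\prod_{k<l}\Bigl|\frac{1-q_F^{-(\nu_k-\nu_l)}}{1-q_F^{-1-(\nu_k-\nu_l)}}\Bigr|^2\le \Bigl(\frac{2}{1-q_F^{-1}}\Bigr)^{2\binom{n-1}{2}} ,
\]
and the apparent poles of $\zeta_F(\nu_k-\nu_l)$ are exactly cancelled. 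So the left-hand side is uniformly bounded. Since $\|\lambda\|,\|\nu\|$ are bounded on the fundamental domains, the exponential on the right is bounded below by a positive constant. This case is therefore immediate for any fixed $\eta_n$, say $\eta_n=1$.

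\emph{Archimedean case.} Write $\lambda=ia$ and $\nu=ib$ with $a\in\bR^n$, $b\in\bR^{n-1}$, and apply Stirling's formula $|\Gamma(\sigma+it)|\asymp (1+|t|)^{\sigma-1/2}e^{-\pi|t|/2}$, uniformly for $\sigma$ in the finitely many fixed values $\tfrac14,\tfrac12,\tfrac12,1$ that occur. The factors have the following sizes:
\begin{itemize}
\item Each $|\zeta_F(\tfrac12+i(a_i-b_k))|^2$ is at most a polynomial times $e^{-\kappa|a_i-b_k|}$, where $\kappa=\pi/2$ for $F=\bR$ and $\kappa=\pi$ for $F=\bC$.
\item Each $|\zeta_F(1+it)|^{-2}$ is at most a polynomial times $e^{\kappa|t|}$; the function $\zeta_F$ has no zeros on $\operatorname{Re}s=1$.
\item Each ratio $|\zeta_F(1+it)/\zeta_F(it)|^2$ in $|{\bf c}^{(n-1)}|^{-2}$ grows only polynomially, and it is bounded near $t=0$ because the pole of $\zeta_F$ at $0$ makes it vanish there.
\end{itemize}
Hence
\[
P_n(\lambda,-\nu)\bigl|{\bf c}^{(n-1)}(\nu)\bigr|^{-2}\le C\,(1+\|a\|+\|b\|)^{N}\,e^{-\kappa E(a,b)},
\]
\[
E(a,b)=\sum_{i,k}|a_i-b_k|-\sum_{i<j}|a_i-a_j|-\sum_{k<l}|b_k-b_l| .
\]

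\emph{The combinatorial inequality is the main obstacle.} I need two lower bounds on the piecewise linear function $E$.
\begin{itemize}
\item First, $E\ge 0$. This is the interlacing-type inequality. I would prove it by induction after sorting the coordinates; equivalently, it is the statement that the period integral converges for all unitary parameters, which by Theorem~\ref{thm:bessel-period} already holds, so that the exponential part cannot blow up.
\item Second, $E(a,b)\ge 2\|b\|-C\|a\|$. Both sides are built from $1$-Lipschitz pieces, so $E(a,b)\ge E(0,b)-C\|a\|$ with $C$ depending on $n$. Moreover
\[
E(0,b)=n\sum_k|b_k|-\sum_{k<l}|b_k-b_l|\ge n\|b\|-(n-2)\|b\|=2\|b\|,
\]
since each $|b_k|$ appears in at most $n-2$ of the differences.
\end{itemize}
Thus $E\ge \max(0,\,2\|b\|-C\|a\|)\ge \theta(2\|b\|-C\|a\|)$ for every $\theta\in[0,1]$.

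\emph{Conclusion.} Given $0<\varepsilon\le1$, take $\theta=\varepsilon/(\kappa C)$, after enlarging $C$ if necessary so that $\theta\le1$. This gives the bound
\[
e^{-\kappa E}\le \exp\bigl(\varepsilon\|\lambda\|-(2\varepsilon/C)\|\nu\|\bigr).
\]
Finally I absorb the polynomial factor by replacing $\varepsilon$ with $\varepsilon/2$ in this argument: then $(1+\|a\|+\|b\|)^N\le C_{n,\varepsilon}e^{\varepsilon\|a\|/2+\varepsilon\|b\|/C}$. This yields the lemma with $\eta_n=1/C$, where $C$ depends only on $n$.
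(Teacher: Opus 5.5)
Your reduction follows the paper's proof. After the cancellation inside $|\mathbf{c}^{(n-1)}|^{-2}$, Stirling gives a polynomial times $e^{-\kappa E}$, with $E=M-L-N$ in the paper's notation, and one then interpolates with $\theta$ between $E\ge 0$ and a coercive linear bound. Your non-Archimedean argument is fine. Your coercive bound, via $E(a,b)\ge E(0,b)-2(n-1)\|a\|$ and $E(0,b)\ge 2\|b\|$, validly replaces the paper's use of $N\le(1-\frac2n)M$ together with $M\ge n\|\nu\|-(n-1)\|\lambda\|$.

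\textbf{The gap.} The other input is $E\ge0$, i.e.\ $L+N\le M$, the second inequality of Lemma~\ref{lem:Yu}. It is the only non-formal step, and you do not prove it. ``Induction after sorting'' is not yet an argument. Your claimed equivalence with convergence of the period integral also fails as stated. Finiteness of $\mathcal P_n(\lambda,\nu)$ at each fixed unitary parameter says nothing about its size as $\|a\|+\|b\|\to\infty$. Moreover, your bullets record only \emph{upper} bounds on the kernel, and upper bounds can never force a \emph{lower} bound on $E$.

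\textbf{Two ways to close it.}
\begin{itemize}
\item \emph{The paper's counting argument.} Order the $2n-1$ numbers $a_1,\dots,a_n,b_1,\dots,b_{n-1}$ as $z_1\le\dots\le z_{2n-1}$. Let $\alpha_r$ and $\beta_r$ be the numbers of $a_i$, respectively $b_k$, that are $\le z_r$. The coefficient of $z_{r+1}-z_r$ in $M-L-N$ is then
$\alpha_r(n-1-\beta_r)+\beta_r(n-\alpha_r)-\alpha_r(n-\alpha_r)-\beta_r(n-1-\beta_r)=(\alpha_r-\beta_r)(\alpha_r-\beta_r-1)\ge0$.
\item \emph{Your analytic idea, made precise.}
\begin{itemize}
\item The bounds $|\Phi^{(n)}_\lambda|\le\Phi^{(n)}_0$ and $|\Phi^{(n-1)}_\nu|\le\Phi^{(n-1)}_0$ give $|\mathcal P_n(\lambda,\nu)|\le\mathcal P_n(0,0)<\infty$, uniformly in the parameters.
\item The two-sided form of Stirling gives $P_n(\lambda,-\nu)\ge c\,(1+\|a\|+\|b\|)^{-d}e^{-\kappa E(a,b)}$.
\item Since $E(ta,tb)=tE(a,b)$ for $t>0$, a point with $E<0$ would make the kernel grow exponentially along a ray, contradicting boundedness.
\end{itemize}
\end{itemize}

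\textbf{A bookkeeping slip at the end.} After replacing $\varepsilon$ by $\varepsilon/2$, the decay is $-\frac{\varepsilon}{C}\|b\|$. Absorbing the polynomial with $e^{\varepsilon\|b\|/C}$ cancels it exactly, leaving no decay in $\|\nu\|$. Absorb with $e^{\varepsilon\|b\|/(2C)}$ instead, which gives $\eta_n=1/(2C)$.
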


The proof uses Stirling's formula together with two elementary inequalities for real numbers. We record the inequalities first.

\begin{lemma}
\label{lem:Yu}
Let \(x_1,\ldots,x_n\) and \(y_1,\ldots,y_{n-1}\) be real numbers. Then
\[
\sum_{1\leq k<l\leq n-1}|y_k-y_l|
\leq
\left(1-\frac{2}{n}\right)
\sum_{i=1}^{n}\sum_{k=1}^{n-1}|x_i-y_k|,
\]
and
\[
\sum_{1\leq i<j\leq n}|x_i-x_j|
+
\sum_{1\leq k<l\leq n-1}|y_k-y_l|
\leq
\sum_{i=1}^{n}\sum_{k=1}^{n-1}|x_i-y_k|.
\]
\end{lemma}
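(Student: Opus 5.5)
Both inequalities will follow from a layer-cake (level-set) argument, which reduces them to elementary inequalities between integers. For real \(a,b\) we write
\[
|a-b|=\int_{\bR}\bigl|\mathbf 1_{\{a>t\}}-\mathbf 1_{\{b>t\}}\bigr|\,dt .
\]
We apply this to every term on both sides and interchange the finite sums with the integral. For fixed \(t\in\bR\), set
\[
p=p(t)=\#\{i:x_i>t\}\in\{0,\ldots,n\},\qquad q=q(t)=\#\{k:y_k>t\}\in\{0,\ldots,m\},\qquad m=n-1 .
\]
At this level the integrands of the three sums are as follows.
\[
\sum_{i<j}\bigl|\mathbf 1_{\{x_i>t\}}-\mathbf 1_{\{x_j>t\}}\bigr|=p(n-p),\qquad
\sum_{k<l}\bigl|\mathbf 1_{\{y_k>t\}}-\mathbf 1_{\{y_l>t\}}\bigr|=q(m-q),
\]
\[
\sum_{i,k}\bigl|\mathbf 1_{\{x_i>t\}}-\mathbf 1_{\{y_k>t\}}\bigr|=p(m-q)+q(n-p).
\]
Each pair is counted once if exactly one of its two entries exceeds \(t\). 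It therefore suffices to prove both inequalities pointwise in \(t\), that is, for all integers \(0\le p\le n\) and \(0\le q\le m\).

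For the second inequality we need \(p(n-p)+q(m-q)\le p(m-q)+q(n-p)\). Using \(m=n-1\), the right side minus the left side is
\[
p(p-1-q)+q(q+1-p)=(p-q)^2-(p-q)=(p-q)(p-q-1).
\]
This is a product of two consecutive integers, hence it is \(\ge 0\).

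For the first inequality we need \(q(m-q)\le (1-\tfrac2n)\bigl[p(m-2q)+qn\bigr]\). The right side is affine in \(p\), so it suffices to check the endpoints \(p=0\) and \(p=n\).
\begin{itemize}
\item At \(p=0\) the right side equals \((n-2)q\). This dominates \(q(n-1-q)\): the case \(q=0\) is trivial, and if \(q\ge1\) then \(n-1-q\le n-2\).
\item At \(p=n\) the right side equals \((n-2)(m-q)\). This dominates \(q(m-q)\): the case \(q=m\) is trivial, and if \(q\le m-1=n-2\) the comparison is immediate.
\end{itemize}
Integrating the pointwise inequalities over \(t\) gives both claims. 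The only step needing care is the correct counting of the integrands; there is no genuine obstacle.
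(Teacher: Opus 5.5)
Your proof is correct, but it follows the paper only partly. The paper proves the first inequality by a different argument.

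For the second inequality your argument is the paper's in integral form. The paper sorts the $2n-1$ numbers as $z_1\le\cdots\le z_{2n-1}$. It then compares the coefficient of each gap $z_{r+1}-z_r$ on both sides, using the counts $a_r=\#\{i:x_i\le z_r\}$ and $b_r=\#\{k:y_k\le z_r\}$. The difference of coefficients is $(a_r-b_r)(a_r-b_r-1)\ge 0$. At a level $t\in[z_r,z_{r+1})$ you have $p=n-a_r$ and $q=n-1-b_r$, so your $(p-q)(p-q-1)$ is the same quantity.

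For the first inequality the paper avoids level sets entirely. It applies the triangle inequality $|y_k-y_l|\le|y_k-x_i|+|x_i-y_l|$ and sums over $i$ to get $n|y_k-y_l|\le\sum_i|x_i-y_k|+\sum_i|x_i-y_l|$. It then sums over pairs $k<l$, using that each $y_k$ occurs in exactly $n-2$ pairs. That route is shorter, needs no case analysis, and explains the constant $(n-2)/n$ as a double-counting ratio.

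Your route treats both inequalities by one mechanism: integer counting in $p,q$, plus an affine endpoint check in $p$. It also shows the constant $1-\frac2n$ is sharp. Equality holds in the level-set inequality at $p=0$, $q=1$. This is realized globally by $x_1=\cdots=x_n=y_2=\cdots=y_{n-1}=0$ and $y_1=1$, where both sides of the first inequality equal $n-2$.
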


\begin{proof} We prove
the first inequality
by induction.
For \(n=2\), the assertion is trivial. Assume \(n\geq 3\).
For every pair \(k<l\) and every \(i\), the triangle inequality gives
\[
|y_k-y_l|
\leq
|y_k-x_i|+|x_i-y_l|.
\]
Summing over \(i=1,\ldots,n\), we get
\[
n|y_k-y_l|
\leq
\sum_{i=1}^n |x_i-y_k|
+
\sum_{i=1}^n |x_i-y_l|.
\]
Now sum this inequality over all pairs \(1\leq k<l\leq n-1\). Each fixed \(y_k\) occurs in exactly \(n-2\) pairs, and hence
\[
n\sum_{1\leq k<l\leq n-1}|y_k-y_l|
\leq
(n-2)\sum_{i=1}^n\sum_{k=1}^{n-1}|x_i-y_k|.
\]
Dividing by \(n\) proves the claim.

To prove the 
second inequality 
we order the \(2n-1\) numbers
\[
x_1,\ldots,x_n,y_1,\ldots,y_{n-1}
\]
as
\[
z_1\leq z_2\leq \cdots \leq z_{2n-1}.
\]
If some of the numbers are equal, the corresponding intervals have length zero, so the ordering among equal elements is irrelevant. Each summand in both sides
of the inequality 
is a sum of 
$z_{r+1}-z_r$
and we shall
rewrite the sums
of $z_{r+1}-z_r$, $r=1, \cdots, 2n-2$.

For each $r$ let
\[
a_r=\#\{i:x_i\leq z_r\},
\qquad
b_r=\#\{k:y_k\leq z_r\}.
\]
We compare the coefficients of the interval length \(z_{r+1}-z_r\) in the two sides of the desired inequality.

Any summand
in the \[\text{RHS}= 
\sum_{i=1}^{n}\sum_{k=1}^{n-1}|x_i-y_k|
\]
contributes 
to the term \(z_{r+1}-z_r\) to its distance exactly when its two endpoints lie on different sides of the interval \((z_r,z_{r+1})\). Hence the coefficient of \(z_{r+1}-z_r\) in
RHS
is
\[
a_r(n-1-b_r)+b_r(n-a_r).
\]
Similarly, the coefficient of \(z_{r+1}-z_r\) in
\[
\sum_{1\leq i<j\leq n}|x_i-x_j|
+
\sum_{1\leq k<l\leq n-1}|y_k-y_l|
\]
is
\[
a_r(n-a_r)+b_r(n-1-b_r).
\]
The difference between the first coefficient and the second is
\[
a_r(n-1-b_r)+b_r(n-a_r)
-
a_r(n-a_r)-b_r(n-1-b_r).
\]
After simplification this equals
\[
(a_r-b_r)(a_r-b_r-1).
\]
Since \(a_r-b_r\) is an integer, this quantity is always non-negative. Therefore the coefficient of every interval length \(z_{r+1}-z_r\) is at least as large on the right-hand side as on the left-hand side. Summing over \(r\) proves the inequality.
\end{proof}

\begin{proof}[Proof of Lemma~\ref{lem:period-plancherel-estimate}]
By the definitions of \(P_n\) and \({\bf c}^{(n-1)}\), we have
\[
P_n(\lambda,-\nu)|{\bf c}^{(n-1)}(\nu)|^{-2}
=
\frac{
\prod_{i=1}^{n}\prod_{k=1}^{n-1}
\left|\zeta_F\!\left(\frac12+\lambda_i-\nu_k\right)\right|^2
}{
\prod_{1\leq i<j\leq n}
\left|\zeta_F(1+\lambda_i-\lambda_j)\right|^2
\prod_{1\leq k<l\leq n-1}
\left|\zeta_F(\nu_k-\nu_l)\right|^2
}.
\]
Here the Plancherel density cancels the factors
\[
\zeta_F(1+\nu_k-\nu_l)
\]
which occur in \(P_n\).

If \(F\) is non-Archimedean, then the parameter space
\[
\frak I^n\times\frak I^{n-1}
\]
is compact. The possible singular hyperplanes in the displayed expression come from local zeta factors. The factors which occur in the denominator are never zero on the unitary axis; they either remain finite and nonzero, or they have poles, in which case their reciprocals vanish. Thus, after assigning the natural limiting value along these hyperplanes, the expression is bounded on the compact parameter space. Hence the desired estimate follows after increasing \(C_{n,\varepsilon}\).

Assume from now on that \(F\) is Archimedean. In 
this case we identify \(i\mathbb R\) with \(\mathbb R\) by taking imaginary parts when writing absolute values. Put
\[
L=\sum_{1\leq i<j\leq n}|\lambda_i-\lambda_j|,
\qquad
N=\sum_{1\leq k<l\leq n-1}|\nu_k-\nu_l|,
\qquad
M=\sum_{i=1}^{n}\sum_{k=1}^{n-1}|\lambda_i-\nu_k|.
\]
By Stirling's formula,
\[
|\Gamma(x+it)|
\asymp_x
(1+|t|)^{x-\frac12}e^{-\frac{\pi}{2}|t|}
\]
for fixed \(x\). Since
\[
\zeta_\bR(s)=\pi^{-s/2}\Gamma(s/2),
\qquad
\zeta_\bC(s)=2(2\pi)^{-s}\Gamma(s),
\]
it follows that the expression above is bounded by a polynomial factor in
\(\lambda,\nu\), times
\[
\exp\left(
\alpha(L+N-M)
\right),
\qquad
\alpha=\frac{\pi}{[\bC:F]}.
\]
Polynomial factors may be absorbed into an arbitrarily small exponential loss. Thus, for every \(\delta>0\), after increasing the constant it is enough to estimate
\[
\alpha(L+N-M)+\delta(L+N+M).
\]
We now use the two elementary inequalities. Lemma~\ref{lem:Yu} gives
\[
L+N-M\leq 0,
\]
and
\[
N\leq \left(1-\frac2n\right)M,
\]
and hence
\[
N-M\leq -\frac2n M.
\]
Choose a number \(0<\theta<1\). We split
\[
\alpha(L+N-M)
=
(1-\theta)\alpha(L+N-M)
+
\theta\alpha(L+N-M).
\]
The first term is non-positive by Lemma~\ref{lem:Yu}. For the second term we use
\[
L+N-M=L+(N-M)\leq L-\frac2n M,
\]
and obtain
\[
\theta\alpha(L+N-M)
\leq
\theta\alpha L-\frac{2\theta\alpha}{n}M.
\]
Moreover, since \(N\leq M\), we have
\[
\delta(L+N+M)\leq \delta L+2\delta M.
\]
Taking
\[
\delta=\frac{\theta\alpha}{2n},
\]
we get
\[
\alpha(L+N-M)+\delta(L+N+M)
\leq
\left(\theta\alpha+\delta\right)L
-
\frac{\theta\alpha}{n}M.
\]
Finally,
\[
L\leq (n-1)\|\lambda\|
\]
and
\[
M
=
\sum_{i=1}^{n}\sum_{k=1}^{n-1}|\lambda_i-\nu_k|
\geq
n\|\nu\|-(n-1)\|\lambda\|.
\]
Therefore there are constants \(B_n,E_n>0\), depending only on \(n\) and \(F\), such that
\[
\alpha(L+N-M)+\delta(L+N+M)
\leq
B_n\theta\|\lambda\|-E_n\theta\|\nu\|.
\]
Choosing \(\theta>0\) so that \(B_n\theta\leq\varepsilon\), and increasing the constant once more, we obtain
\[
P_n(\lambda,-\nu)|{\bf c}^{(n-1)}(\nu)|^{-2}
\leq
C_{n,\varepsilon}
\exp\left(
\varepsilon\|\lambda\|-\eta_n\varepsilon\|\nu\|
\right)
\]
for some \(\eta_n>0\) depending only on \(n\) and \(F\). This proves the estimate.
\end{proof}

\begin{proof}[Proof of Proposition~\ref{prop:restricted-plancherel}]
Recall 
(\ref{f-lam}).
We first prove that the right-hand side of the claimed formula is well-defined.
By Theorem~\ref{thm:bessel-period} and Remark~\ref{rem:Pn}, there is a constant
\(d_n>0\), depending only on \(n\) and on the Haar normalizations, such that
\[
\mathcal P_n(\lambda,-\nu)=d_nP_n(\lambda,-\nu)
\]
on the unitary axis. Hence Lemma~\ref{lem:period-plancherel-estimate} gives
\[
\mathcal P_n(\lambda,-\nu)
\big|{\bf c}^{(n-1)}(\nu)\big|^{-2}
\leq
C_{n,\varepsilon}
\exp\left(
\varepsilon\|\lambda\|-\eta_n\varepsilon\|\nu\|
\right),
\]
for some $\eta_n > 0$. 
Since \(\lambda\) is fixed in the proposition, the factor
\(e^{\varepsilon\|\lambda\|}\) may be absorbed into the constant. \\

Since \(\Phi_\nu^{(n-1)}\) is a positive definite spherical function,
\[
|\Phi_\nu^{(n-1)}(g)|\leq 
|\Phi_\nu^{(n-1)}(1)|=1
\]
for every \(g\in G_{n-1}\). Therefore
\[
\left|
\Phi_\nu^{(n-1)}(g)
\mathcal P_n(\lambda,-\nu)
\big|{\bf c}^{(n-1)}(\nu)\big|^{-2}
\right|
\leq
C_{\lambda,n,\varepsilon}
\exp\left(
-\eta_n\varepsilon\|\nu\|
\right).
\]
Thus the right-hand side 
of (\ref{inversion})
is integrable on \(\frak I^{n-1}/S_{n-1}\), and the majorant is independent of \(g\),
namely 
\[
\widetilde F_\lambda(g)
:=
\int_{\frak I^{n-1}/S_{n-1}}
\Phi_\nu^{(n-1)}(g)
\mathcal P_n(\lambda,-\nu)
\big|{\bf c}^{(n-1)}(\nu)\big|^{-2}\,d\nu
\]
defines a continuous bi-\(K_{n-1}\)-invariant function on \(G_{n-1}\). \\

We prove that \(\widetilde F_\lambda=F_\lambda\). Let
\[
u\in C_c^\infty(K_{n-1}\backslash G_{n-1}/K_{n-1}).
\]
By absolute convergence and Fubini's theorem,
\[
\begin{aligned}
\int_{G_{n-1}}
\widetilde F_\lambda(g)
u(g)
\,dg
&=
\int_{\frak I^{n-1}/S_{n-1}}
\left(
\int_{G_{n-1}}
\Phi_\nu^{(n-1)}(g) u(g)
\,dg
\right)
\mathcal P_n(\lambda,-\nu)
\big|{\bf c}^{(n-1)}(\nu)\big|^{-2}\,d\nu .
\end{aligned}
\]
By the definition of the spherical transform,
\[
\int_{G_{n-1}}
\Phi_\nu^{(n-1)}(g)u(g)\,dg
=
\widehat u(-\nu).
\]
Therefore
\[
\int_{G_{n-1}}\widetilde F_\lambda(g)u(g)\,dg
=
\int_{\frak I^{n-1}/S_{n-1}}
\widehat u(-\nu)
\mathcal P_n(\lambda,-\nu)
\big|{\bf c}^{(n-1)}(\nu)\big|^{-2}\,d\nu .
\]
By the definition of the spherical period,
\[
\mathcal P_n(\lambda,-\nu)
=
\int_{G_{n-1}}
F_\lambda(h)\Phi_{-\nu}^{(n-1)}(h)\,dh .
\]
Hence
\[
\begin{aligned}
\int_{G_{n-1}}\widetilde F_\lambda(g)u(g)\,dg
&=
\int_{\frak I^{n-1}/S_{n-1}}
\left(
\int_{G_{n-1}}
F_\lambda(h)\Phi_{-\nu}^{(n-1)}(h)\,dh
\right)
\widehat u(-\nu)
\big|{\bf c}^{(n-1)}(\nu)\big|^{-2}\,d\nu .
\end{aligned}
\]
Using 
Theorem 
\ref{thm:bessel-period}
the rapid decay of \(\widehat u\)
in the Archimedean case 
\cite[Theorem 6.6.8]{GV}
and compactness of the parameter space in the non-Archimedean case, we obtain
by Fubini's Theorem that
\[
\begin{aligned}
\int_{G_{n-1}}\widetilde F_\lambda(g)u(g)\,dg
&=
\int_{G_{n-1}}
F_\lambda(h)
\left(
\int_{\frak I^{n-1}/S_{n-1}}
\widehat u(-\nu)
\Phi_{-\nu}^{(n-1)}(h)
\big|{\bf c}^{(n-1)}(\nu)\big|^{-2}\,d\nu
\right)
dh .
\end{aligned}
\]
Changing variables \(\eta=-\nu\) in the inner integral, and using the invariance
of the Plancherel measure under \(\nu\mapsto-\nu\), the inner integral becomes
\[
\int_{\frak I^{n-1}/S_{n-1}}
\widehat u(\eta)
\Phi_{\eta}^{(n-1)}(h)
\big|{\bf c}^{(n-1)}(\eta)\big|^{-2}\,d\eta.
\]
By the spherical Plancherel formula (\cite{HC3, GV}), this is \(u(h)\).
 Hence
\[
\int_{G_{n-1}}\widetilde F_\lambda(g)u(g)\,dg
=
\int_{G_{n-1}}F_\lambda(g)u(g)\,dg .
\]
Thus \(F_\lambda\) and \(\widetilde F_\lambda\) have the same pairing against every compactly supported bi-\(K_{n-1}\)-invariant smooth function \(u\). Their difference is a continuous bi-\(K_{n-1}\)-invariant function. If it were nonzero at some point, then by continuity and bi-\(K_{n-1}\)-invariance it would have nonzero pairing with a compactly supported bi-\(K_{n-1}\)-invariant test function supported in a small neighborhood of the corresponding double coset, a contradiction. Hence the difference is identically zero.
\end{proof}


\section{Fourier coefficients} \label{Sec4}

In this section we pass from the restricted Plancherel formula to the Fourier expansion on the diagonal torus. The point of the notation below is to keep track of the central character at each step of the recursion.

In view of Remark~\ref{rem:Pn}, we absorb the positive constants relating \(\mathcal P_n\) and \(P_n\) into the Haar measures on the parameter spaces. Thus all recursive formulas below are written in terms of \(P_n\). We also absorb the finite Weyl-group factors into the normalization of Haar measure on the parameter spaces. Thus, from now on, we write the recursive integrals over the corresponding fibers in \(\frak I^m\), rather than over quotients by symmetric groups.

Here \(A_n\) denotes the full diagonal torus of \(G_n=\GL_n(F)\). The spherical characters \(\chi_\mu\) used below are insensitive to the maximal compact subgroup of \(A_n\): to the sign components in the real case, to the unitary part in the complex case, and to \(A_n\cap K_n\) in the non-Archimedean case. Thus, when taking Fourier transforms, \(A_n\) is understood through this split direction.

We define a chain of subgroups of \(G_n\) by
\[
A_n=\widetilde G_1\subset \widetilde G_2\subset\cdots\subset \widetilde G_n=G_n,
\]
where
\[
\widetilde G_k=G_k\times G_1^{n-k}
\]
is viewed as a subgroup of \(G_n\) by block diagonal embedding. Thus
\(\widetilde G_1=G_1^n=A_n\). \\

Recall the notation
\(
[\lambda]
\) and 
\(
\lambda^{-}
\) defined in 
(\ref{lambda-bracket})
and 
(\ref{lambda-up-})
for  \(\lambda\in\frak I^n\).  
Let \(\nu\in\frak I^{n-1}\)
and define a function on
\[
\widetilde G_{n-1}=G_{n-1}\times G_1
\]
by
\begin{equation}\label{Philn}
\Phi_{\lambda,\nu}(g,a)
=
\Phi_\nu^{(n-1)}(g)\,|a|_F^{[\lambda]-[\nu]},
\qquad
g\in G_{n-1},\ a\in G_1.
\end{equation}
The scalar factor is exactly what adjusts the central character. Indeed, the function \(\Phi_\nu^{(n-1)}\) has central character \(|\cdot|_F^{[\nu]}\) on \(G_{n-1}\), while \(\Phi_\lambda^{(n)}\) has central character \(|\cdot|_F^{[\lambda]}\) on \(G_n\).

The following is the form of Proposition~\ref{prop:restricted-plancherel} which will be iterated. With the normalization convention fixed above, we write the Plancherel integral over \(\frak I^{n-1}\) rather than over \(\frak I^{n-1}/S_{n-1}\).

\begin{corollary}\label{cor:inversion}
Assume \(n\geq 2\). For \(\lambda\in\frak I^n\), one has
\[
\Phi_\lambda^{(n)}(\tilde g)
=
\int_{\frak I^{n-1}}
\Phi_{\lambda,\nu}(\tilde g)\,
P_n(\lambda,-\nu)\,
\big|{\bf c}^{(n-1)}(\nu)\big|^{-2}\,d\nu,
\qquad
\tilde g\in\widetilde G_{n-1}.
\]
The integral converges absolutely and locally uniformly in \(\tilde g\).
\end{corollary}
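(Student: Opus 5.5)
Write \(\tilde g=(g,a)\in\widetilde G_{n-1}=G_{n-1}\times G_1\). The plan is to reduce the identity to Proposition~\ref{prop:restricted-plancherel} by splitting off the central direction of \(G_n\). The element \(\tilde g\) embeds in \(G_n\) as \(\operatorname{diag}(g,a)\). Let \(z_a=a\cdot 1_n\), a central element of \(G_n\). Then
\[
\operatorname{diag}(g,a)=z_a\,\iota_{n-1}(a^{-1}g).
\]
Since \(\Phi_\lambda^{(n)}\) has central character \(|\cdot|_F^{[\lambda]}\) (because \(\chi_\lambda(z_a)=|a|_F^{[\lambda]}\) and \(z_a\) acts by this scalar on \(\pi_\lambda\)), we get
\[
\Phi_\lambda^{(n)}(\tilde g)=|a|_F^{[\lambda]}\,F_\lambda(a^{-1}g),
\]
with \(F_\lambda\) as in (\ref{f-lam}).

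Next I apply Proposition~\ref{prop:restricted-plancherel} at the point \(a^{-1}g\in G_{n-1}\). For this I use that \(\Phi_\nu^{(n-1)}\) has central character \(|\cdot|_F^{[\nu]}\), so
\[
\Phi_\nu^{(n-1)}(a^{-1}g)=|a|_F^{-[\nu]}\Phi_\nu^{(n-1)}(g).
\]
Substituting into (\ref{inversion}) gives
\[
\Phi_\lambda^{(n)}(\tilde g)=\int_{\frak I^{n-1}/S_{n-1}}\Phi_\nu^{(n-1)}(g)\,|a|_F^{[\lambda]-[\nu]}\,\mathcal P_n(\lambda,-\nu)\,|{\bf c}^{(n-1)}(\nu)|^{-2}\,d\nu .
\]
The integrand is exactly \(\Phi_{\lambda,\nu}(\tilde g)\) from (\ref{Philn}), times the period kernel.

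The remaining bookkeeping has three parts. First, replace \(\mathcal P_n\) by \(P_n\) using Theorem~\ref{thm:bessel-period} and Remark~\ref{rem:Pn}; the constant \(d_n\) is absorbed into the Haar measure. Second, unfold \(\frak I^{n-1}/S_{n-1}\) to \(\frak I^{n-1}\). The integrand is \(S_{n-1}\)-invariant in \(\nu\): the spherical function \(\Phi_\nu^{(n-1)}\), the quantity \([\nu]\), \(|{\bf c}^{(n-1)}(\nu)|^{-2}\) and \(P_n(\lambda,-\nu)\) are all symmetric in the \(\nu_k\). Hence the integral over the quotient equals \(1/(n-1)!\) times the integral over \(\frak I^{n-1}\), and this factor is also absorbed into the measure. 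Third, where the quotient measure is defined via a fundamental domain, the symmetry of the integrand makes the unfolding legitimate.

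For convergence, \(|\Phi_{\lambda,\nu}(\tilde g)|\le 1\) because \(|a|_F^{[\lambda]-[\nu]}\) has modulus one for unitary parameters and \(\Phi_\nu^{(n-1)}\) is positive definite. Lemma~\ref{lem:period-plancherel-estimate} then gives an integrable majorant \(C e^{-\eta_n\varepsilon\|\nu\|}\) that is independent of \(\tilde g\). This yields absolute convergence, in fact uniformly in \(\tilde g\), and local uniformity is immediate.

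There is no real obstacle here. The only point needing care is the central-character bookkeeping: the sign of \([\lambda]-[\nu]\), and checking that the normalization of \(\chi_\lambda\) (normalized induction, with the modulus character trivial on the center) gives exactly \(|\det|^{[\lambda]}\)-type central behavior for both \(\Phi^{(n)}_\lambda\) and \(\Phi^{(n-1)}_\nu\).
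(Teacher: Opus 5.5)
Your proposal is correct and follows the paper's proof essentially step for step. Both arguments use the factorization $\operatorname{diag}(g,a)=aI_n\,\iota_{n-1}(a^{-1}g)$, the central characters of $\Phi_\lambda^{(n)}$ and $\Phi_\nu^{(n-1)}$ to produce the factor $|a|_F^{[\lambda]-[\nu]}$, and the $\tilde g$-independent majorant from Lemma~\ref{lem:period-plancherel-estimate}. Your explicit unfolding of $\frak I^{n-1}/S_{n-1}$ to $\frak I^{n-1}$ via the $S_{n-1}$-symmetry of the integrand is a welcome spelled-out version of what the paper simply absorbs into its measure-normalization convention.
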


\begin{proof}
Let
\[
\tilde g=(g,a)\in G_{n-1}\times G_1,
\]
viewed as the block diagonal matrix
\[
\tilde g=
\begin{pmatrix}
g&0\\
0&a
\end{pmatrix}
\in G_n.
\]
Writing \(a\) also for the scalar matrix \(aI_{n-1}\) in \(G_{n-1}\), we have
\[
\tilde g
=
aI_n
\begin{pmatrix}
a^{-1}g&0\\
0&1
\end{pmatrix}.
\]
Since \(\Phi_\lambda^{(n)}\) has central character \(|\cdot|_F^{[\lambda]}\), Proposition~\ref{prop:restricted-plancherel} gives
\[
\begin{aligned}
\Phi_\lambda^{(n)}(\tilde g)
&=
|a|_F^{[\lambda]}
\Phi_\lambda^{(n)}
\left(
\begin{pmatrix}
a^{-1}g&0\\
0&1
\end{pmatrix}
\right)\\
&=
|a|_F^{[\lambda]}
\int_{\frak I^{n-1}}
\Phi_\nu^{(n-1)}(a^{-1}g)\,
P_n(\lambda,-\nu)\,
\big|{\bf c}^{(n-1)}(\nu)\big|^{-2}\,d\nu .
\end{aligned}
\]
Here we have used the measure normalization described above, so that \(P_n\) appears instead of \(\mathcal P_n\). Since \(\Phi_\nu^{(n-1)}\) has central character \(|\cdot|_F^{[\nu]}\), we have
\[
\Phi_\nu^{(n-1)}(a^{-1}g)
=
|a|_F^{-[\nu]}\Phi_\nu^{(n-1)}(g).
\]
Therefore
\[
\Phi_\lambda^{(n)}(\tilde g)
=
\int_{\frak I^{n-1}}
\Phi_\nu^{(n-1)}(g)P_n(\lambda,-\nu)\,
|a|_F^{[\lambda]-[\nu]}
\big|{\bf c}^{(n-1)}(\nu)\big|^{-2}\,d\nu .
\]
By the definition of \(\Phi_{\lambda,\nu}\), this is precisely
\[
\Phi_\lambda^{(n)}(\tilde g)
=
\int_{\frak I^{n-1}}
\Phi_{\lambda,\nu}(\tilde g)\,
P_n(\lambda,-\nu)\,
\big|{\bf c}^{(n-1)}(\nu)\big|^{-2}\,d\nu .
\]
The same estimate as in Proposition~\ref{prop:restricted-plancherel} gives
absolute and locally uniform convergence, since the extra central factor
\[
|a|_F^{[\lambda]-[\nu]}
\]
has absolute value \(1\).
\end{proof}

We now define the recursive Fourier coefficient. For \(n=1\), the condition \([\lambda]=[\mu]\) forces \(\mu=\lambda\), and we set
\[
c_1(\lambda,\lambda)=1.
\]
For \(n\geq 2\), and for \(\lambda,\mu\in\frak I^n\) satisfying
\[
[\lambda]=[\mu],
\]
define
\[
c_n(\lambda,\mu)
=
\int_{\substack{\nu\in\frak I^{n-1}\\ [\nu]=[\mu^-]}}
P_n(\lambda,-\nu)\,
\big|{\bf c}^{(n-1)}(\nu)\big|^{-2}
c_{n-1}(\nu,\mu^-)\,d\nu.
\]
For simplicity
we write $c(\lambda, 
\nu)=c(\lambda, \nu)$
and the dependence
on $n$ will be clear
in the context. 

Here \(d\nu\) denotes the Haar measure on the affine fiber
\[
\{\nu\in\frak I^{n-1}:[\nu]=[\mu^-]\}.
\]

The following estimates justify the recursive definition and the repeated use of Fubini's theorem.

\begin{lemma}\label{lem:est}
For every \(n\geq 1\), the integral defining \(c(\lambda,\mu)\) converges absolutely for all
\(\lambda,\mu\in\frak I^n\) with \([\lambda]=[\mu]\). Moreover, the following
estimates hold:

\smallskip
\noindent\textbf{Pointwise bound.}
For every \(0<\varepsilon\leq 1\), there exists \(C'_{n,\varepsilon}>0\) such that
\begin{equation}
    \label{4.2-1}
c(\lambda,\mu)
\leq
C'_{n,\varepsilon}e^{\varepsilon\|\lambda\|}
\end{equation}[
for all \(\lambda,\mu\in\frak I^n\) with \([\lambda]=[\mu]\). In addition,
\(c(\lambda,\mu)\) is continuous in \(\lambda\) and \(\mu\).

\smallskip
\noindent\textbf{Integrability in the second variable.}
For fixed \(\lambda\), the function \(c(\lambda,\cdot)\) is integrable on the affine hyperplane
\[
\{\mu\in\frak I^n:[\mu]=[\lambda]\}.
\]
More precisely, for every \(0<\varepsilon\leq 1\), there exists \(C''_{n,\varepsilon}>0\) such that
\begin{equation}
    \label{4.2-2}
\int_{\substack{\mu\in\frak I^n\\ [\mu]=[\lambda]}}
c(\lambda,\mu)\,d\mu
\leq
C''_{n,\varepsilon}e^{\varepsilon\|\lambda\|}.
\end{equation}
\end{lemma}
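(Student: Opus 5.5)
We argue by induction on \(n\), proving absolute convergence, the pointwise bound \eqref{4.2-1}, continuity and the integrated bound \eqref{4.2-2} together. For \(n=1\) the fiber \(\{[\mu]=[\lambda]\}\) is the single point \(\mu=\lambda\) and \(c\equiv 1\), so everything is trivial (the integral in \eqref{4.2-2} is taken against the point mass on this fiber). In the non-Archimedean case every parameter space is compact. Lemma~\ref{lem:period-plancherel-estimate} (with \(\varepsilon=1\)) gives a uniform bound on the kernel \(P_n(\lambda,-\nu)|{\bf c}^{(n-1)}(\nu)|^{-2}\), so all statements follow at once from the inductive hypothesis, with the exponentials bounded by constants. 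The real work is therefore the Archimedean case.

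Assume the lemma for \(n-1\), and fix \(\lambda,\mu\) with \([\lambda]=[\mu]\). By Lemma~\ref{lem:period-plancherel-estimate} with a parameter \(\varepsilon_1\), and the inductive pointwise bound \(c(\nu,\mu^-)\le C'_{n-1,\varepsilon_2}e^{\varepsilon_2\|\nu\|}\), the integrand defining \(c(\lambda,\mu)\) is dominated by
\[
C_{n,\varepsilon_1}C'_{n-1,\varepsilon_2}\,
e^{\varepsilon_1\|\lambda\|}\,e^{-(\eta_n\varepsilon_1-\varepsilon_2)\|\nu\|}.
\]
We choose \(\varepsilon_1=\varepsilon\) and \(\varepsilon_2=\eta_n\varepsilon/2\). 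The \(\nu\)-integral is taken over an affine hyperplane of dimension \(n-2\) inside \(\frak I^{n-1}\). Translating it back to the linear fiber, and noting that \(\|\nu\|\) dominates the distance to the origin, the integral of \(e^{-c\|\nu\|}\) over any such affine fiber is bounded uniformly in the fiber. This proves absolute convergence and \eqref{4.2-1}.

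Continuity in \((\lambda,\mu)\) follows by dominated convergence. The integrand is continuous away from the hyperplanes \(\nu_k=\nu_l\), where it is bounded, and by induction \(c_{n-1}\) is continuous. The dependence on \(\mu\) enters only through the fiber \([\nu]=[\mu^-]\) and through \(\mu^-\). Writing \(\nu=\nu_0+\tfrac{[\mu^-]}{n-1}\mathbf 1\) with \([\nu_0]=0\), we move to a fixed domain, where the locally uniform majorant above applies.

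For \eqref{4.2-2}, the affine fiber \(\{[\mu]=[\lambda]\}\) is parametrized by \(\mu^-\in\frak I^{n-1}\) freely, since \(\mu_n=[\lambda]-[\mu^-]\); up to a constant Jacobian, \(d\mu=d\mu^-\). By Tonelli,
\[
\int_{[\mu]=[\lambda]} c(\lambda,\mu)\,d\mu=\int_{\frak I^{n-1}} P_n(\lambda,-\nu)|{\bf c}^{(n-1)}(\nu)|^{-2}\Bigl(\int_{[\mu^-]=[\nu]} c(\nu,\mu^-)\,d\mu^-\Bigr)d\nu .
\]
Here we slice \(\mu^-\) by the value of \([\mu^-]\) and exchange the order, using that \([\nu]\) runs over \(\frak I\) as \(\nu\) runs over \(\frak I^{n-1}\). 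The inner integral is at most \(C''_{n-1,\varepsilon_2}e^{\varepsilon_2\|\nu\|}\) by induction. The outer integrand is then at most \(C e^{\varepsilon\|\lambda\|}e^{-(\eta_n\varepsilon-\varepsilon_2)\|\nu\|}\), which is integrable over all of \(\frak I^{n-1}\).

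The main point needing care is this bookkeeping of \(\varepsilon\)'s: the inductive loss \(e^{\varepsilon_2\|\nu\|}\) must be absorbed by the gain \(e^{-\eta_n\varepsilon\|\nu\|}\) from Lemma~\ref{lem:period-plancherel-estimate}. This works only because that lemma lets \(\varepsilon\) be arbitrarily small while the decay rate \(\eta_n\varepsilon\) scales with it. The other point is the uniformity of the fiber integrals in the translation parameter, which holds because the decay is in the full norm \(\|\nu\|\).
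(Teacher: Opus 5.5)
Your proposal is correct and follows essentially the same route as the paper. It runs induction on \(n\) using Lemma~\ref{lem:period-plancherel-estimate}, absorbing the inductive loss \(e^{\frac12\eta_n\varepsilon\|\nu\|}\) into the gain \(e^{-\eta_n\varepsilon\|\nu\|}\); continuity comes from dominated convergence after translating fibers in the diagonal direction, and the integrated bound from Tonelli after re-slicing \(\{(\mu^-,\nu):[\nu]=[\mu^-]\}\) with \(\nu\) free. The only cosmetic difference is that the paper shrinks \(\delta\) so that \(\tfrac12\eta_n\delta\le 1\) stays in the admissible range of the inductive hypothesis, whereas your \(\varepsilon_2=\eta_n\varepsilon/2\) may exceed \(1\); this is harmless, since the bound with a smaller exponent implies the bound with a larger one.
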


\begin{proof}
The proof is by induction on \(n\). For \(n=1\), the assertions are immediate. Assume \(n\geq 2\), and assume the result known in rank \(n-1\). We use Lemma~\ref{lem:period-plancherel-estimate} in the form
\[
P_n(\lambda,-\nu)\big|{\bf c}^{(n-1)}(\nu)\big|^{-2}
\leq
C_{n,\delta}
\exp\left(
\delta\|\lambda\|-\eta_n\delta\|\nu\|
\right),
\]
valid for every \(0<\delta\leq 1\), where \(\eta_n>0\) depends only on \(n\). \\

Choose \(0<\delta\leq 1\) so small that
\[
\delta\leq \varepsilon
\qquad\text{and}\qquad
\frac12\eta_n\delta\leq 1 .
\]
By the induction hypothesis, applied with \(\frac12\eta_n\delta\), we have
\[
c(\nu,\mu^-)
\leq
C'_{n-1,\frac12\eta_n\delta}
\exp\left(
\frac12\eta_n\delta\|\nu\|
\right).
\]
Therefore the integrand defining \(c(\lambda,\mu)\) is bounded by
\[
C_{n,\delta}
C'_{n-1,\frac12\eta_n\delta}
\exp\left(
\delta\|\lambda\|
-\frac12\eta_n\delta\|\nu\|
\right).
\]
This is integrable over the affine fiber
\[
\{\nu\in\frak I^{n-1}:[\nu]=[\mu^-]\}.
\]
Since \(\delta\leq\varepsilon\), this gives
\[
c(\lambda,\mu)
\leq
C'_{n,\varepsilon}e^{\varepsilon\|\lambda\|}.
\]
Continuity follows from dominated convergence. To apply it, identify nearby fibers
\[
\{\nu:[\nu]=[\mu^-]\}
\]
by translation in the diagonal direction; the estimates above give a locally uniform integrable majorant, and the induction hypothesis gives continuity of \(c(\nu,\mu^-)\). \\

It remains to prove the estimate
(\ref{4.2-2}). By Tonelli's theorem,
\begin{equation}
\label{integ-c}
    \begin{aligned}
&\int_{\substack{\mu\in\frak I^n\\ [\mu]=[\lambda]}}
c(\lambda,\mu)\,d\mu  \\
&=
\int_{\substack{\mu\in\frak I^n\\ [\mu]=[\lambda]}}
\int_{\substack{\nu\in\frak I^{n-1}\\ [\nu]=[\mu^-]}}
P_n(\lambda,-\nu)
\big|{\bf c}^{(n-1)}(\nu)\big|^{-2}
c(\nu,\mu^-)\,d\nu\,d\mu .
\end{aligned}
\end{equation}
For fixed \(\nu\), the condition
\[
[\nu]=[\mu^-]
\]
means that \(\mu^-\) ranges over the affine hyperplane
\[
\{\xi\in\frak I^{n-1}:[\xi]=[\nu]\}.
\]
Once \(\mu^-=\xi\) is fixed, the last coordinate \(\mu_n\) is determined by the condition
\[
[\mu]=[\lambda].
\]
Thus changing
the order of integration 
(\ref{integ-c})
becomes
\[
\int_{\nu\in\frak I^{n-1}}
P_n(\lambda,-\nu)
\big|{\bf c}^{(n-1)}(\nu)\big|^{-2}
\left(
\int_{\substack{\xi\in\frak I^{n-1}\\ [\xi]=[\nu]}}
c(\nu,\xi)\,d\xi
\right)d\nu .
\]
By the induction hypothesis, again with parameter \(\frac12\eta_n\delta\),
\[
\int_{\substack{\xi\in\frak I^{n-1}\\ [\xi]=[\nu]}}
c(\nu,\xi)\,d\xi
\leq
C''_{n-1,\frac12\eta_n\delta}
\exp\left(
\frac12\eta_n\delta\|\nu\|
\right).
\]
Combining this with Lemma~\ref{lem:period-plancherel-estimate}, we obtain
\[
\int_{\substack{\mu\in\frak I^n\\ [\mu]=[\lambda]}}
c(\lambda,\mu)\,d\mu
\leq
C''_{n,\varepsilon}e^{\delta\|\lambda\|}
\leq
C''_{n,\varepsilon}e^{\varepsilon\|\lambda\|}.
\]
This completes the proof.
\end{proof}

For \(\mu=(\mu_1,\ldots,\mu_n)\in\frak I^n\), we write
\[
\chi_\mu(\operatorname{diag}(a_1,\ldots,a_n))
=
\prod_{j=1}^n |a_j|_F^{\mu_j},
\qquad
\operatorname{diag}(a_1,\ldots,a_n)\in A_n .
\]
These characters are trivial on the maximal compact subgroup of \(A_n\), so the formula depends only on the diagonal split direction. \\

Applying Corollary~\ref{cor:inversion} recursively along the chain
\[
A_n=\widetilde G_1\subset\widetilde G_2\subset\cdots\subset\widetilde G_n=G_n,
\]
we obtain the Fourier expansion on the diagonal torus, in the central-character component \([\lambda]\). The measure \(d\mu\) in the theorem below is the measure on the affine hyperplane
\[
\{\mu\in\frak I^n:[\mu]=[\lambda]\}
\]
obtained by translating a fixed Haar measure on the linear hyperplane
\[
\{\mu\in\frak I^n:[\mu]=0\}.
\]

\begin{theorem}\label{thm:Fourier-GLn}
For \(\lambda\in\frak I^n\) and \(a\in A_n\),
\[
\Phi_\lambda^{(n)}(a)
=
\int_{\substack{\mu\in\frak I^n\\ [\mu]=[\lambda]}}
c(\lambda,\mu)\chi_\mu(a)\,d\mu .
\]
The integral converges absolutely and locally uniformly in \(a\).
\end{theorem}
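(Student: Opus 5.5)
We argue by induction on \(n\), proving the slightly stronger statement that the formula holds for \(a\in A_n\) with the integral converging absolutely and locally uniformly. For \(n=1\), \(G_1=A_1\) and \(\Phi_\lambda^{(1)}(a)=|a|_F^{\lambda}=\chi_\lambda(a)\). The fiber \(\{\mu:[\mu]=[\lambda]\}\) is the single point \(\lambda\), carrying unit mass, and \(c(\lambda,\lambda)=1\), so there is nothing to prove.

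For \(n\geq2\), write \(a=\operatorname{diag}(a',a_n)\) with \(a'\in A_{n-1}\) and \(a_n\in G_1\), so that \(a\in\widetilde G_{n-1}\). Corollary~\ref{cor:inversion} gives
\[
\Phi_\lambda^{(n)}(a)=\int_{\frak I^{n-1}}\Phi_\nu^{(n-1)}(a')\,|a_n|_F^{[\lambda]-[\nu]}\,P_n(\lambda,-\nu)\big|{\bf c}^{(n-1)}(\nu)\big|^{-2}\,d\nu .
\]
By the induction hypothesis,
\[
\Phi_\nu^{(n-1)}(a')=\int_{[\xi]=[\nu]}c(\nu,\xi)\chi_\xi(a')\,d\xi .
\]
Substituting this, we obtain a double integral over \(\nu\in\frak I^{n-1}\) and \(\xi\) in the fiber \([\xi]=[\nu]\). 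We then set \(\mu=(\xi,\mu_n)\) with \(\mu_n=[\lambda]-[\xi]\). This gives \(\chi_\xi(a')|a_n|_F^{[\lambda]-[\nu]}=\chi_\mu(a)\) and \([\mu]=[\lambda]\).

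Next we exchange the order of integration. We integrate first over \(\nu\) in the fiber \([\nu]=[\mu^-]\), and then over \(\mu\) in the hyperplane \([\mu]=[\lambda]\). This is the same reparametrization used in the proof of \eqref{4.2-2}. The inner \(\nu\)-integral is exactly the definition of \(c(\lambda,\mu)\), which yields the claimed formula.

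Justifying the exchange is the main point. Since \(|\chi_\mu(a)|=1\), the absolute value of the integrand is \(P_n(\lambda,-\nu)|{\bf c}^{(n-1)}(\nu)|^{-2}c(\nu,\xi)\), which is independent of \(a\). The bound \eqref{4.2-2} in rank \(n-1\), taken with a small parameter, combined with Lemma~\ref{lem:period-plancherel-estimate}, shows that this majorant is integrable. The computation is exactly the Tonelli argument of Lemma~\ref{lem:est}. Fubini's theorem then applies. The same majorant gives absolute convergence of the final \(\mu\)-integral, uniformly in \(a\), via \eqref{4.2-2} in rank \(n\).

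The remaining subtlety is bookkeeping of measures. We need the product of the Haar measure \(d\nu\) on \(\frak I^{n-1}\) with the fiber measures \(d\xi\) on \(\{[\xi]=[\nu]\}\) to match the fiber measures \(d\nu\) on \(\{[\nu]=[\mu^-]\}\) times \(d\mu\) on \(\{[\mu]=[\lambda]\}\). This matching is the translation-invariant disintegration of Haar measure on \(\frak I^{n-1}\) along \([\cdot]\), applied in two ways. Any positive constant that appears is absorbed by the normalization convention fixed at the start of Section~\ref{Sec4}.

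In the Archimedean case the spherical characters only see the split direction, so the sign and unitary components of \(a\) play no role. In the non-Archimedean case the argument is identical, since \(\frak I\) is compact and all integrals are over compact sets.
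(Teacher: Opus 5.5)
Your proposal is correct and follows essentially the same route as the paper. The paper also argues by induction through Corollary~\ref{cor:inversion}, substitutes the rank-$(n-1)$ expansion, reparametrizes by $\mu=(\xi_1,\ldots,\xi_{n-1},[\lambda]-[\xi])$, justifies Fubini via Lemma~\ref{lem:est} together with Lemma~\ref{lem:period-plancherel-estimate}, and absorbs the resulting positive measure constant into the normalization. Your description of the measure matching as two disintegrations of Haar measure on $\mathfrak{I}^{n-1}$ along $[\cdot]$ is just a more explicit account of the change of variables that the paper states without elaboration.
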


\begin{proof}
We prove the formula by induction on \(n\). For \(n=1\), the group is
\(G_1=F^\times\), the spherical function is just the character
\[
\Phi_\lambda^{(1)}(a)=|a|_F^\lambda,
\]
and the assertion is exactly the normalization
\[
c(\lambda,\lambda)=1.
\]
Assume now that \(n\geq 2\), and that the formula is known in rank \(n-1\). Let
\[
a=\operatorname{diag}(a_1,\ldots,a_n)\in A_n
\]
and write
\[
a^-=\operatorname{diag}(a_1,\ldots,a_{n-1})\in A_{n-1}.
\]
Under the block-diagonal identification
\[
A_n\simeq A_{n-1}\times G_1,
\qquad
a\leftrightarrow (a^-,a_n),
\]
Corollary~\ref{cor:inversion} gives
\[
\Phi_\lambda^{(n)}(a)
=
\int_{\frak I^{n-1}}
P_n(\lambda,-\nu)
\Phi_{\lambda,\nu}(a^-,a_n)
\big|{\bf c}^{(n-1)}(\nu)\big|^{-2}\,d\nu .
\]
By the definition \eqref{Philn},
\[
\Phi_{\lambda,\nu}(a^-,a_n)
=
\Phi_\nu^{(n-1)}(a^-)|a_n|_F^{[\lambda]-[\nu]}.
\]
By the induction hypothesis,
\[
\Phi_\nu^{(n-1)}(a^-)
=
\int_{\substack{\xi\in\frak I^{n-1}\\ [\xi]=[\nu]}}
c(\nu,\xi)\chi_\xi(a^-)\,d\xi .
\]
Substituting this into the preceding formula gives
\[
\begin{aligned}
\Phi_\lambda^{(n)}(a)
&=
\int_{\frak I^{n-1}}
P_n(\lambda,-\nu)
\big|{\bf c}^{(n-1)}(\nu)\big|^{-2}
|a_n|_F^{[\lambda]-[\nu]}  \\
&\qquad\qquad\cdot
\left(
\int_{\substack{\xi\in\frak I^{n-1}\\ [\xi]=[\nu]}}
c(\nu,\xi)\chi_\xi(a^-)\,d\xi
\right)d\nu .
\end{aligned}
\]
The estimates in Lemma~\ref{lem:est} justify Fubini's theorem and the absolute convergence of the iterated integral. \\

Now set
\[
\mu=(\xi_1,\ldots,\xi_{n-1},[\lambda]-[\xi]).
\]
Thus \(\mu^-=\xi\) and \(\mu_n=[\lambda]-[\xi]\). Then
\[
[\mu]=[\lambda],
\]
and
\[
\chi_\xi(a^-)|a_n|_F^{[\lambda]-[\xi]}
=
\chi_\mu(a).
\]
Moreover, the condition \([\xi]=[\nu]\) is equivalent to
\[
[\nu]=[\mu^-].
\]
With our Haar normalizations, the affine change of variables
\[
(\nu,\xi)\longmapsto
(\mu,\nu),
\qquad
\mu=(\xi_1,\ldots,\xi_{n-1},[\lambda]-[\xi]),
\]
identifies the iterated fiber measures with
\[
d\mu\,d\nu,
\qquad
[\mu]=[\lambda],\quad [\nu]=[\mu^-],
\]
up to a positive constant depending only on \(n\). This constant is absorbed
into the measure normalizations fixed at the beginning of the section. Thus the preceding
expression becomes
\[
\Phi_\lambda^{(n)}(a)
=
\int_{\substack{\mu\in\frak I^n\\ [\mu]=[\lambda]}}
\left(
\int_{\substack{\nu\in\frak I^{n-1}\\ [\nu]=[\mu^-]}}
P_n(\lambda,-\nu)
\big|{\bf c}^{(n-1)}(\nu)\big|^{-2}
c(\nu,\mu^-)\,d\nu
\right)
\chi_\mu(a)\,d\mu .
\]
The inner integral is precisely \(c(\lambda,\mu)\), by definition. Hence
\[
\Phi_\lambda^{(n)}(a)
=
\int_{\substack{\mu\in\frak I^n\\ [\mu]=[\lambda]}}
c(\lambda,\mu)\chi_\mu(a)\,d\mu .
\]
The absolute and locally uniform convergence follows from Lemma~\ref{lem:est}.
\end{proof}


\section{The \(\SL_n\)-formula and lower bounds}

We now pass from \(G_n=\GL_n(F)\) to
\[
G_n'=\SL_n(F).
\]
Let
\[
A_n'=A_n\cap G_n'
\]
be the determinant-one diagonal torus in \(G_n'\). As before, when Fourier transforms are taken on \(A_n'\), compact diagonal factors are suppressed: in the Archimedean case we use the identity component, and in the non-Archimedean case the quotient by \(A_n'\cap K_n\). Recall that
\[
{\sf p}:\frak I^n\longrightarrow \frak I^n/\frak I
\]
denotes the quotient by the diagonal copy of \(\frak I\). We also write
\[
{\sf p}' :
\{\mu\in\frak I^n:[\mu]=0\}
\longrightarrow
\frak I^n/\frak I
\]
for the restriction of
\({\sf p}\) 
to the hyperplane \([\mu]=0\). To derive the result
for $SL_n(F)$
from $GL_n(F)$ requires
choosing representatives
in the quotient space
via \({\sf p}\).

In the Archimedean case, \(\frak I=i\bR\), and the condition \([\mu]=0\)
selects a unique representative of each class in \(\frak I^n/\frak I\).
In the non-Archimedean case, \(\frak I=i\bR/(2\pi i/\log q_F)\bZ\).
If both \(\mu\) and \(\mu+t\mathbf 1\) satisfy \([\cdot]=0\), then
\(nt=0\) in \(\frak I\). This equation has exactly \(n\) solutions in the
circle group \(\frak I\). Thus the restriction
\({\sf p}' \) above is bijective in the Archimedean case and \(n\)-to-one in the
non-Archimedean case. In particular, every class in \(\frak I^n/\frak I\) admits a representative
with zero total sum. Indeed, starting from any representative \(\alpha\), one
solves \(nt=-[\alpha]\) in \(\frak I\) and replaces \(\alpha\) by
\(\alpha+t\mathbf 1\). \\

For \(\bar\mu\in\frak I^n/\frak I\), let \(\chi_{\bar\mu}\) denote the corresponding character of \(A_n'\). If \(\mu\in{\sf p}^{-1}(\bar\mu)\), then
\[
\chi_{\bar\mu}(a)=\chi_\mu(a),
\qquad a\in A_n',
\]
since diagonal shifts of \(\mu\) are trivial on determinant-one diagonal matrices.  \\

For \(\bar\lambda\in\frak I^n/\frak I\), choose \(\lambda\in{\sf p}^{-1}(\bar\lambda)\). As above,
\[
\phi_{\bar\lambda}^{(n)}
=
\Phi_\lambda^{(n)}\big|_{G_n'}
\]
is independent of the chosen representative \(\lambda\). We define
\[
\widehat{\phi}_{\bar\lambda}^{(n)}(\bar\mu)
=
\int_{A_n'}
\phi_{\bar\lambda}^{(n)}(a)\chi_{-\bar\mu}(a)\,da .
\]

\begin{theorem}\label{thm:SLn-formula}
Let \(\bar\lambda,\bar\mu\in\frak I^n/\frak I\), and choose
\[
\lambda\in{\sf p}^{-1}(\bar\lambda) \quad \textrm{with} \quad
[\lambda]=0.
\]
Then
\begin{equation}
    \label{hat-phi}
\widehat{\phi}_{\bar\lambda}^{(n)}(\bar\mu)
=
\sum_{\substack{\mu\in{\sf p}^{-1}(\bar\mu)\\ [\mu]=0}}
c(\lambda,\mu).
\end{equation}
The sum is finite; in the Archimedean case it has one term, while in the non-Archimedean case it has \(n\) terms.
\end{theorem}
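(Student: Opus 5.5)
The plan is to restrict the expansion of Theorem~\ref{thm:Fourier-GLn} to the subtorus \(A_n'\) and then use Fourier uniqueness on \(A_n'\). Since \(\phi_{\bar\lambda}^{(n)}\) does not depend on the representative, we take \(\lambda\) with \([\lambda]=0\). Theorem~\ref{thm:Fourier-GLn} then gives, for \(a\in A_n'\),
\[
\phi_{\bar\lambda}^{(n)}(a)=\int_{\substack{\mu\in\frak I^n\\ [\mu]=0}} c(\lambda,\mu)\chi_\mu(a)\,d\mu .
\]
This integral converges absolutely because, by Lemma~\ref{lem:est}, \(c(\lambda,\cdot)\) is continuous, non-negative and integrable on the hyperplane \(\{[\mu]=0\}\). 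On \(A_n'\) we have \(\chi_\mu=\chi_{{\sf p}'(\mu)}\). We push the measure \(c(\lambda,\mu)\,d\mu\) forward along \({\sf p}'\) to \(\frak I^n/\frak I\), which is identified with the unitary dual of \(A_n'\).

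In the Archimedean case \({\sf p}'\) is a linear isomorphism, and its image measure is a fixed multiple of Haar measure, which we absorb into the normalization. In the non-Archimedean case \({\sf p}'\) is an \(n\)-to-one covering homomorphism of compact tori with kernel \(\{t\mathbf 1: nt=0\}\). The pushforward of Haar measure on \(\{[\mu]=0\}\) is again Haar measure on \(\frak I^n/\frak I\) up to a constant, and the density becomes the fibre sum. In both cases we obtain
\[
\phi_{\bar\lambda}^{(n)}(a)=\int_{\frak I^n/\frak I} g(\bar\mu)\chi_{\bar\mu}(a)\,d\bar\mu,
\qquad
g(\bar\mu)=\sum_{\substack{\mu\in{\sf p}^{-1}(\bar\mu)\\ [\mu]=0}} c(\lambda,\mu),
\]
where \(g\) is continuous and in \(L^1\).

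Next we need to recover \(g\) as the Fourier transform. We know \(\phi_{\bar\lambda}^{(n)}|_{A_n'}\in L^1(A_n')\) from the Harish-Chandra estimates quoted in the preliminaries, so \(\widehat\phi_{\bar\lambda}^{(n)}\) is continuous. The two \(L^1\) functions \(\widehat\phi_{\bar\lambda}^{(n)}\) and \(g\) therefore have the same inverse Fourier transform, namely \(\phi_{\bar\lambda}^{(n)}\). By injectivity of the Fourier transform on \(L^1\) of the dual (Fourier inversion for \(L^1\) functions whose transforms are \(L^1\)), they agree almost everywhere. Since both are continuous, they agree everywhere, which gives \eqref{hat-phi}.

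Finally we must check that Haar measures are normalized compatibly: the measure on \(A_n'\) must be dual to the pushed-forward measure on \(\frak I^n/\frak I\). This is the step most likely to need care. We would handle it by the convention already in force, namely that positive constants depending only on \(n\) are absorbed into the measure normalizations, so we choose \(da\) on \(A_n'\) to be the Plancherel-dual measure.
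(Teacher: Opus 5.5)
Your proposal is correct and follows the paper's proof: take $[\lambda]=0$, restrict the $\GL_n$ expansion of Theorem~\ref{thm:Fourier-GLn} to $A_n'$, push $c(\lambda,\mu)\,d\mu$ forward along ${\sf p}'$ to get the fibre sum, and conclude by uniqueness of Fourier transforms on $A_n'$; the paper simply cites this uniqueness, so your explicit inversion argument adds detail without changing the route. The one point you leave implicit is that $\widehat\phi_{\bar\lambda}^{(n)}\in L^1$: it holds because $\phi_{\bar\lambda}^{(n)}|_{A_n'}$ is a continuous positive definite $L^1$ function, or you can avoid it by applying inversion on the dual side to $g$, whose inverse transform lies in $L^1(A_n')$.
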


\begin{proof}
For \(a\in A_n'\), we have
\[
\phi_{\bar\lambda}^{(n)}(a)
=
\Phi_\lambda^{(n)}(a).
\]
Since \([\lambda]=0\), Theorem~\ref{thm:Fourier-GLn} gives
\[
\phi_{\bar\lambda}^{(n)}(a)
=
\int_{\substack{\mu\in\frak I^n\\ [\mu]=0}}
c(\lambda,\mu)\chi_\mu(a)\,d\mu .
\]
Pushing this integral forward by
\[
{\sf p}' :
\{\mu\in\frak I^n:[\mu]=0\}
\to
\frak I^n/\frak I,
\]
and using the corresponding normalization of measure on \(\frak I^n/\frak I\), we get
\[
\phi_{\bar\lambda}^{(n)}(a)
=
\int_{\frak I^n/\frak I}
\Big(
\sum_{\substack{\mu\in{\sf p}^{-1}(\bar\mu)\\ [\mu]=0}}
c(\lambda,\mu)
\Big)
\chi_{\bar\mu}(a)\,d\bar\mu .
\]
The coefficient function is integrable by Lemma~\ref{lem:est}. Hence the last display is an absolutely convergent Fourier inversion formula on \(A_n'\). By uniqueness of Fourier coefficients on \(A_n'\), we
find that the Fourier transform \(
\widehat{\phi}_{\bar\lambda}^{(n)}(\bar\mu)
\) is given by (\ref{hat-phi}).
This proves the formula.
\end{proof}

\subsection{Lower bounds}

We now prove the exponential lower bound for the recursive coefficients. This is the quantitative form of the positivity statement.

\begin{proposition}\label{prop:lower-bound-c}
There exist constants \(C_n,D_n>0\) such that
\[
c(\lambda,\mu)
\geq
C_n e^{-D_n(\|\lambda\|+\|\mu\|)}
\]
for all \(\lambda,\mu\in\frak I^n\) satisfying \([\lambda]=[\mu]\).
\end{proposition}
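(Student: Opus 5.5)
We argue by induction on \(n\). The recursion defining \(c(\lambda,\mu)\) has a non-negative integrand, so it suffices to restrict the \(\nu\)-integral to a well-chosen subset of the affine fiber \(\{[\nu]=[\mu^-]\}\) and bound each factor from below there. For \(n=1\) we have \(c(\lambda,\lambda)=1\), so the claim holds with \(C_1=1\) and \(D_1\) arbitrary. Assume the bound in rank \(n-1\) with constants \(C_{n-1},D_{n-1}\).

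\textbf{Choice of the integration region.} Let \(\nu^0\in\frak I^{n-1}\) be the point on the fiber \([\nu]=[\mu^-]\) with all coordinates equal, \(\nu^0_k=[\mu^-]/(n-1)\). In the non-Archimedean case we choose one of the finitely many such roots using fixed representatives. Fix once and for all a compact set \(S_0\) in the zero-sum hyperplane \(\{\tau\in\frak I^{n-1}:[\tau]=0\}\) of positive measure. We require \(\tau_k-\tau_l\) to stay in a fixed compact set bounded away from \(0\), so that \(S_0\) avoids the walls \(\tau_k=\tau_l\); for instance, take a small ball around a point with distinct coordinates. Set \(S=\nu^0+S_0\). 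Its measure equals that of \(S_0\), independently of \(\mu\), and for \(\nu\in S\) the differences \(\nu_k-\nu_l=\tau_k-\tau_l\) range over a fixed compact set avoiding \(0\). Moreover \(\|\nu\|\le \|\mu\|+C\) on \(S\).

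\textbf{Lower bounds for the factors on \(S\).} Write the kernel as in the proof of Lemma~\ref{lem:period-plancherel-estimate}:
\[
\frac{\prod_{i,k}|\zeta_F(\tfrac12+\lambda_i-\nu_k)|^2}{\prod_{i<j}|\zeta_F(1+\lambda_i-\lambda_j)|^2\prod_{k<l}|\zeta_F(\nu_k-\nu_l)|^2}.
\]
The \(\nu\)-denominator is a continuous function of \(\tau\in S_0\), hence bounded above there, which gives a factor bounded below. In the non-Archimedean case, the numerator factors satisfy \(|\zeta_F(\tfrac12+it)|\ge (1+q_F^{-1/2})^{-1}\). The denominator factors \(|\zeta_F(1+it)|^{-2}\ge (1-q_F^{-1})^2\) when finite, and there are no poles since \(\mathrm{Re}\,s=1\). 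Thus everything is bounded below by constants. In the Archimedean case we use Stirling's formula. The numerator gives \(|\zeta_F(\tfrac12+it)|^2\gtrsim e^{-\alpha|t|}\), with no zero on that line and a bounded-below value near \(t=0\). The \(\lambda\)-denominator factors \(|\zeta_F(1+it)|^{-2}\) are \(\gtrsim 1\), since \(\Gamma\) decays on vertical lines and \(|\Gamma(1+it)|\le 1\), together with the analogous statement for \(\zeta_\bR\). Hence the kernel is \(\ge c\,e^{-\alpha M}\) with \(M=\sum_{i,k}|\lambda_i-\nu_k|\le (n-1)\|\lambda\|+n\|\nu\|\le (n-1)\|\lambda\|+n\|\mu\|+C\).

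\textbf{Closing the induction.} By the inductive hypothesis, \(c(\nu,\mu^-)\ge C_{n-1}e^{-D_{n-1}(\|\nu\|+\|\mu^-\|)}\ge C_{n-1}e^{-D_{n-1}(2\|\mu\|+C)}\) on \(S\). Integrating over \(S\), whose measure is fixed, yields
\[
c(\lambda,\mu)\ge C_n e^{-D_n(\|\lambda\|+\|\mu\|)}
\]
with \(D_n=\alpha n+2D_{n-1}\), say.

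\textbf{Main obstacle.} The delicate point is uniformity, which has two parts. First, we must ensure the translated region \(S\) keeps the \(\nu\)-differences controlled while the dependence on \(\mu\) enters only through \(\|\nu\|\lesssim\|\mu\|\). Second, we must check that the Archimedean zeta factors on the lines \(\mathrm{Re}=\tfrac12\) and \(\mathrm{Re}=1\) admit uniform two-sided Stirling bounds down to \(t=0\). The first is handled by the translation construction; the second by continuity and non-vanishing of \(\Gamma\) on compact pieces combined with Stirling for large \(|t|\). Combining this with Theorem~\ref{thm:SLn-formula}, the finitely many non-negative terms then give the lower bound of Theorem B.
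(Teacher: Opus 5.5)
Your proposal is correct and follows the paper's proof essentially step by step. The shared steps are: induction on $n$; restricting the $\nu$-integral to a diagonal translate of a fixed, wall-avoiding, positive-measure set in the zero-sum hyperplane (the paper's $\Omega_n(\mu)=r([\mu^-])\mathbf 1+\Omega_n$); bounding the zeta factors below by Stirling or by compactness; and applying the inductive bound to $c(\nu,\mu^-)$.

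Your observation that $|\zeta_F(1+it)|$ is uniformly bounded, so that the $\lambda$-denominator costs nothing, is slightly sharper than the paper's Lemma~\ref{lem:zeta-lower-bounds}. For $n=2$ the fiber is a single point, so you should take $S_0=\{0\}$ with counting measure, as the paper does.
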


In the proof we will use the following simple local zeta estimates. 

\begin{lemma}\label{lem:zeta-lower-bounds}
There exist constants \(C,D>0\), depending only on \(F\), such that for all unitary parameters \(u,v\in\frak I\),
\[
\left|\zeta_F\!\left(\frac12+u-v\right)\right|
\geq
C e^{-D(|u|+|v|)}
\]
and
\[
\left|\zeta_F(1+u-v)\right|
\leq
C e^{D(|u|+|v|)}.
\]
In the non-Archimedean case, the same estimates hold after choosing representatives in a fixed fundamental domain.
\end{lemma}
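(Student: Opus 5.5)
The plan is to treat the Archimedean and non-Archimedean cases separately. In both, the estimates reduce to elementary properties of the Gamma function or of the factor \((1-q_F^{-s})^{-1}\) on the lines \(\operatorname{Re}s=\frac12\) and \(\operatorname{Re}s=1\). Set \(t=u-v\), which is purely imaginary (or lies in a fixed fundamental domain), with \(|t|\leq |u|+|v|\). Since \(e^{-D|t|}\geq e^{-D(|u|+|v|)}\) and \(e^{D|t|}\leq e^{D(|u|+|v|)}\), it suffices to prove
\[
|\zeta_F(\tfrac12+t)|\geq Ce^{-D|t|},\qquad |\zeta_F(1+t)|\leq Ce^{D|t|}
\]
for \(t\in\frak I\).

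\emph{Non-Archimedean case.} Here \(\zeta_F(s)=(1-q_F^{-s})^{-1}\). For \(\operatorname{Re}s=\sigma>0\) we have \(1-q_F^{-\sigma}\leq|1-q_F^{-s}|\leq 1+q_F^{-\sigma}\). This gives
\[
|\zeta_F(\tfrac12+t)|\geq (1+q_F^{-1/2})^{-1}\geq \tfrac12,\qquad
|\zeta_F(1+t)|\leq (1-q_F^{-1})^{-1}\leq 2.
\]
These are uniform in \(t\), so both estimates hold with \(D\) arbitrary (say \(D=1\)) and \(C=2\). The same bounds hold on the quotient \(\frak I\) and under any choice of representatives, because the function is periodic.

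\emph{Archimedean case.} For \(F=\bR\), write \(\zeta_\bR(\frac12+t)=\pi^{-1/4-t/2}\Gamma(\frac14+\frac t2)\) and note \(|\pi^{-t/2}|=1\). For \(F=\bC\), write \(\zeta_\bC(s)=2(2\pi)^{-s}\Gamma(s)\), where \(|(2\pi)^{-t}|=1\). Both estimates then reduce to two-sided bounds for \(|\Gamma(x+iy)|\) with \(x\in\{\frac14,\frac12,1\}\) fixed and \(y\in\bR\). Stirling's formula, already invoked in the proof of Lemma~\ref{lem:period-plancherel-estimate}, gives \(|\Gamma(x+iy)|\asymp_x(1+|y|)^{x-1/2}e^{-\pi|y|/2}\) uniformly in \(y\). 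Continuity and non-vanishing of \(\Gamma\) on the compact range \(|y|\leq 1\) combine with Stirling for large \(|y|\) to give this globally.

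The lower bound then follows from \((1+|y|)^{x-1/2}\geq (1+|y|)^{-1/4}\geq e^{-|y|}\) together with \(e^{-\pi|y|/2}\), which yields \(|\zeta_F(\frac12+t)|\geq Ce^{-D|t|}\) with, say, \(D=\pi+1\). The upper bound is simpler. For \(x=1\) (or \(\frac12\)), \(|\Gamma(x+iy)|\leq\Gamma(x)\) since \(|\Gamma(x+iy)|\leq\int_0^\infty s^{x-1}e^{-s}ds\). Hence \(|\zeta_F(1+t)|\) is bounded outright, and the bound \(Ce^{D|t|}\) is trivial.

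The only step needing care is the \emph{uniformity} of the Stirling lower bound for small \(|y|\). This is handled by the compactness and non-vanishing argument above, since \(\Gamma\) has no zeros and no poles on \(\operatorname{Re}s>0\). The constants \(C,D\) depend only on \(F\), as required.
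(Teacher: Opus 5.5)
Your proof is correct and takes the same route as the paper: Stirling's formula on the relevant vertical lines for $F=\bR,\bC$, and uniform bounds for $(1-q_F^{-s})^{-1}$ on $\operatorname{Re}s=\tfrac12,1$ in the non-Archimedean case; you add useful detail such as the reduction to $t=u-v$ and the bound $|\Gamma(x+iy)|\leq\Gamma(x)$, which makes the upper estimate trivial. One small slip: in the non-Archimedean case your lower bound comes with constant $\tfrac12$, not $2$ (indeed $\zeta_F(\tfrac12)<2$ once $q_F\geq 5$), so either use separate constants for the two inequalities, which is all the later argument needs, or take a single $C=\zeta_F(1)$ and enlarge $D$, using $\zeta_F(\tfrac12)>\zeta_F(1)$ to handle $t$ near $0$.
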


\begin{proof}
In the Archimedean cases this follows from Stirling's formula. Indeed,
\[
\zeta_\bR(s)=\pi^{-s/2}\Gamma(s/2),
\qquad
\zeta_\bC(s)=2(2\pi)^{-s}\Gamma(s),
\]
and Stirling's formula on vertical lines gives exponential upper and lower bounds of the required form, after absorbing polynomial factors into the exponential.

In the non-Archimedean case,
\[
\zeta_F(s)=(1-q_F^{-s})^{-1}.
\]
On the unitary parameter space, the arguments
\[
\frac12+u-v
\qquad\text{and}\qquad
1+u-v
\]
stay away from the pole at \(s=0\). After choosing representatives in a fixed fundamental domain, compactness gives the required bounds.
\end{proof}

\begin{proof}[Proof of Proposition \ref{prop:lower-bound-c}]
The proof is by induction on \(n\). For \(n=1\), the assertion is immediate from
\[
c(\lambda,\lambda)=1.
\]
Assume \(n\geq 2\), and assume the result known in rank \(n-1\). Fix
\[
\lambda,\mu\in\frak I^n,
\qquad
[\lambda]=[\mu].
\]
All constants below are independent of this choice of \(\lambda\) and \(\mu\). \\

By definition,
\[
c(\lambda,\mu)
=
\int_{\substack{\nu\in\frak I^{n-1}\\ [\nu]=[\mu^-]}}
P_n(\lambda,-\nu)
\big|{\bf c}^{(n-1)}(\nu)\big|^{-2}
c(\nu,\mu^-)\,d\nu .
\]
We shall restrict this integral to a subset of the fiber
\[
\{\nu\in\frak I^{n-1}:[\nu]=[\mu^-]\}
\]
whose measure is fixed and whose position relative to the hyperplanes
\[
\nu_k-\nu_l=0
\]
is uniform in \(\mu\). \\

Let
\[
K_{n-1}:=\{\xi\in\frak I^{n-1}:[\xi]=0\}.
\]
Choose once and for all a measurable subset
\[
\Omega_n\subset K_{n-1}
\]
of positive Haar measure, with compact closure in the Archimedean case, such that 
\[
\Omega\cap\{\xi: \xi_k-\xi_l=0\}=\emptyset, 
\qquad 1\leq k<l\leq n-1.
\]
For \(n=2\), there are no such hyperplanes, and we take
\[
\Omega_2=K_1=\{0\},
\]
with the counting measure normalization on this zero-dimensional fiber.

For each \(\tau\in\frak I\), choose \(r(\tau)\in\frak I\) satisfying
\[\label{r-tau-nonarch}
(n-1)r(\tau)=\tau.
\]
In the Archimedean case we take
\begin{equation}
\label{r-tau-arch}
r(\tau)=\frac{\tau}{n-1}.    
\end{equation}
In the non-Archimedean case such a choice exists because
\[
\frak I=i\bR/(2\pi i/\log q_F)\bZ
\]
is a circle group, and multiplication by \(n-1\) on a circle is surjective. This is a statement about the parameter circle, not division in the field \(F\). \\

Put
\[
\tau=[\mu^-],
\qquad
\Omega_n(\mu)=r(\tau)\mathbf 1+\Omega_n.
\]
Then
\[
\Omega_n(\mu)
\subset
\{\nu\in\frak I^{n-1}:[\nu]=[\mu^-]\}.
\]
Moreover, \(\Omega_n(\mu)\) is a translate of the fixed set \(\Omega_n\). 
Hence its measure, obtained by translating the fixed measure on the linear fiber
\(K_{n-1}\), is the fixed positive number
\[
m(\Omega_n)>0,
\]
independent of \(\mu\).\\

The translation from \(\Omega_n\) to \(\Omega_n(\mu)\) is by a diagonal vector. Therefore it does not change differences:
\[
(r(\tau)+\xi_k)-(r(\tau)+\xi_l)=\xi_k-\xi_l.
\]
Consequently, all quantities depending only on the differences
\[
\nu_k-\nu_l
\]
are uniformly controlled on \(\Omega_n(\mu)\), independently of \(\mu\). In particular,
\[
\prod_{1\leq k<l\leq n-1}
\left|\zeta_F(\nu_k-\nu_l)\right|
\]
is bounded above on \(\Omega_n(\mu)\) by a constant depending only on \(n\), \(F\), and the fixed set \(\Omega_n\). \\

We also have a uniform norm bound on \(\Omega_n(\mu)\). In the Archimedean case, \(\Omega_n\) is bounded and
\(
r([\mu^-])=\frac{[\mu^-]}{n-1}
\) by (\ref{r-tau-arch}),
so
\begin{equation}
    \label{nu-by-mu}
\|\nu\|\leq B_n(1+\|\mu\|)
\qquad
(\nu\in\Omega_n(\mu)).
\end{equation}
In the non-Archimedean case the parameter spaces are compact, and the same inequality is automatic after increasing \(B_n\). The constant \(B_n\) is independent of \(\lambda\) and \(\mu\). \\

By the induction hypothesis, with constants independent of \(\nu\) and \(\mu\),
\[
c(\nu,\mu^-)
\geq
C_{n-1}e^{-D_{n-1}(\|\nu\|+\|\mu^-\|)}.
\]
Using \(
\|\mu^-\|\leq \|\mu\|
\)
and (\ref{nu-by-mu})
we obtain
\[
c(\nu,\mu^-)
\geq
C'_n e^{-D'_n\|\mu\|}
\qquad
(\nu\in\Omega_n(\mu)),
\]
with constants \(C'_n,D'_n>0\) independent of \(\lambda\) and \(\mu\). \\

It remains to bound the kernel from below on \(\Omega_n(\mu)\). By definition,
\[
P_n(\lambda,-\nu)\big|{\bf c}^{(n-1)}(\nu)\big|^{-2}
=
\frac{
\prod_{i=1}^{n}\prod_{k=1}^{n-1}
\left|\zeta_F\!\left(\frac12+\lambda_i-\nu_k\right)\right|^2
}{
\prod_{1\leq i<j\leq n}
\left|\zeta_F(1+\lambda_i-\lambda_j)\right|^2
\prod_{1\leq k<l\leq n-1}
\left|\zeta_F(\nu_k-\nu_l)\right|^2
}.
\]
The last product in the denominator is uniformly bounded above on \(\Omega_n(\mu)\), by the choice of \(\Omega_n\) and by the diagonal-translation argument above. \\

Here, in the non-Archimedean case, all absolute values of parameters are understood using the fixed representatives chosen in the norm convention. 
For the remaining factors, Lemma~\ref{lem:zeta-lower-bounds} gives constants \(C,D>0\), depending only on \(F\), such that
\[
\left|\zeta_F\!\left(\frac12+\lambda_i-\nu_k\right)\right|
\geq
C e^{-D(|\lambda_i|+|\nu_k|)}
\]
and
\[
\left|\zeta_F(1+\lambda_i-\lambda_j)\right|
\leq
C e^{D(|\lambda_i|+|\lambda_j|)}
\]
for all \(i,j,k\) and all unitary parameters \(\lambda,\nu\). Hence there exist constants \(C''_n,D''_n>0\), independent of \(\lambda,\mu,\nu\), such that
\[
P_n(\lambda,-\nu)\big|{\bf c}^{(n-1)}(\nu)\big|^{-2}
\geq
C''_n e^{-D''_n(\|\lambda\|+\|\nu\|)}
\]
for all \(\nu\in\Omega_n(\mu)\). Using again
(\ref{nu-by-mu})
we get
\[
P_n(\lambda,-\nu)\big|{\bf c}^{(n-1)}(\nu)\big|^{-2}
\geq
C'''_n e^{-D'''_n(\|\lambda\|+\|\mu\|)}
\qquad
(\nu\in\Omega_n(\mu)),
\]
with constants independent of \(\lambda\) and \(\mu\). \\

Combining this lower bound for the kernel with the lower bound for \(c(\nu,\mu^-)\), we obtain
\[
P_n(\lambda,-\nu)
\big|{\bf c}^{(n-1)}(\nu)\big|^{-2}
c(\nu,\mu^-)
\geq
C_n^*e^{-D_n^*(\|\lambda\|+\|\mu\|)}
\]
for all \(\nu\in\Omega_n(\mu)\), where the constants are independent of \(\lambda\) and \(\mu\).

Finally, since \(\Omega_n(\mu)\) has the fixed positive measure \(m(\Omega_n)\) with respect to the translated fiber measure, we may integrate over this subset and get
\[
c(\lambda,\mu)
\geq
\int_{\Omega_n(\mu)}
P_n(\lambda,-\nu)
\big|{\bf c}^{(n-1)}(\nu)\big|^{-2}
c(\nu,\mu^-)\,d\nu
\geq
m(\Omega_n)C_n^*
e^{-D_n^*(\|\lambda\|+\|\mu\|)}.
\]
Renaming constants, this gives
\[
c(\lambda,\mu)
\geq
C_ne^{-D_n(\|\lambda\|+\|\mu\|)}.
\]
The constants \(C_n,D_n\) depend only on \(n\), \(F\), the fixed Haar measures, and the fixed choice of \(\Omega_n\), and are independent of \(\lambda\) and \(\mu\). This completes the induction.
\end{proof}

\begin{corollary}
\label{cor:SLn-lower-bound}
There exist constants \(C_n,D_n>0\) such that
\[
\widehat{\phi}_{\bar\lambda}^{(n)}(\bar\mu)
\geq
C_n e^{-D_n(\|\bar\lambda\|+\|\bar\mu\|)}
\]
for all \(\bar\lambda,\bar\mu\in\frak I^n/\frak I\). In particular,
\[
\widehat{\phi}_{\bar\lambda}^{(n)}(\bar\mu)>0.
\]
\end{corollary}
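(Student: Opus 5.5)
The plan is to combine Theorem~\ref{thm:SLn-formula} with Proposition~\ref{prop:lower-bound-c}, taking care to pick representatives whose norms are controlled by the quotient norms. Fix \(\bar\lambda,\bar\mu\in\frak I^n/\frak I\) and choose \(\lambda\in{\sf p}^{-1}(\bar\lambda)\) with \([\lambda]=0\). Theorem~\ref{thm:SLn-formula} then writes \(\widehat{\phi}_{\bar\lambda}^{(n)}(\bar\mu)\) as a finite sum of values \(c(\lambda,\mu)\), with \(\mu\) running over the zero-sum representatives of \(\bar\mu\). Each term is non-negative, since \(c\) is defined recursively as an integral of non-negative kernels \(P_n(\lambda,-\nu)|{\bf c}^{(n-1)}(\nu)|^{-2}\). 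It therefore suffices to bound a single term from below. Proposition~\ref{prop:lower-bound-c} applies because \([\lambda]=0=[\mu]\), and it gives \(c(\lambda,\mu)\geq C_ne^{-D_n(\|\lambda\|+\|\mu\|)}\).

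The remaining step is to compare \(\|\lambda\|\) and \(\|\mu\|\) with \(\|\bar\lambda\|\) and \(\|\bar\mu\|\).

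In the Archimedean case, the zero-sum representative is unique. The map \(\bar\xi\mapsto\xi\), sending a class to its zero-sum representative, is a linear isomorphism \(\frak I^n/\frak I\to\{[\xi]=0\}\). Since all norms on finite-dimensional spaces are equivalent, \(\|\lambda\|\leq B\|\bar\lambda\|\) and \(\|\mu\|\leq B\|\bar\mu\|\) for a constant \(B\) depending only on \(n\). Substituting these bounds gives the claim with \(D_n\) replaced by \(BD_n\).

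In the non-Archimedean case, every parameter space is compact. With the fixed fundamental-domain representatives, \(\|\lambda\|\) and \(\|\mu\|\) are bounded by a constant depending only on \(n\) and \(F\). The exponential in Proposition~\ref{prop:lower-bound-c} is therefore bounded below by a positive constant, and the inequality holds after shrinking \(C_n\), since the right-hand side \(e^{-D_n(\|\bar\lambda\|+\|\bar\mu\|)}\leq 1\). The \(n\)-to-one nature of \({\sf p}'\) causes no difficulty: one term of the sum already suffices, and the others only increase it.

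The main point of care is the compatibility of norm conventions. The proposition is stated for representatives in \(\frak I^n\), while the corollary uses quotient norms. Before invoking the norm comparison, I would check that the zero-sum normalization is actually the choice made in Theorem~\ref{thm:SLn-formula}, as opposed to an arbitrary representative whose norm could be large. Once this is confirmed, positivity \(\widehat{\phi}_{\bar\lambda}^{(n)}(\bar\mu)>0\) follows immediately.
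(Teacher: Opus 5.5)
Your proposal is correct and follows essentially the same route as the paper. You keep one non-negative term of the finite sum from Theorem~\ref{thm:SLn-formula} and apply Proposition~\ref{prop:lower-bound-c} to that zero-sum pair, then compare representative norms with quotient norms: norm equivalence for the linear zero-sum section in the Archimedean case, and compactness of the parameter circles in the non-Archimedean case.
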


\begin{proof}
Choose zero-sum representatives
of \(\bar \lambda\)
and \(\bar \mu\)
of the quotient map $\sf p$,
\[
\lambda\in{\sf p}^{-1}(\bar\lambda),
\qquad
\mu\in{\sf p}^{-1}(\bar\mu),
\]
with
\[
[\lambda]=[\mu]=0.
\]
By Theorem~\ref{thm:SLn-formula},
\[
\widehat{\phi}_{\bar\lambda}^{(n)}(\bar\mu)
=
\sum_{\substack{\mu'\in{\sf p}^{-1}(\bar\mu)\\ [\mu']=0}}
c(\lambda,\mu').
\]
Every term in this finite sum is non-negative by the recursive definition of \(c\). Hence, keeping the single term
corresponding to the chosen representative \(\mu\), Proposition~\ref{prop:lower-bound-c}
gives
\[
\widehat{\phi}_{\bar\lambda}^{(n)}(\bar\mu)
\geq
c(\lambda,\mu)
\geq
C_n e^{-D_n(\|\lambda\|+\|\mu\|)}.
\]
It remains to compare representative norms with quotient norms. We choose the zero-sum representatives \(\lambda\) and \(\mu\) so that
\[
\|\lambda\|\leq B_n(1+\|\bar\lambda\|),
\qquad
\|\mu\|\leq B_n(1+\|\bar\mu\|).
\]
In the Archimedean case, the zero-sum representative is unique and this follows from equivalence of norms on finite-dimensional quotient spaces. 
In the non-Archimedean case, the parameter spaces are compact, and the zero-sum fiber over each class has cardinality \(n\): indeed, if both \(\alpha\) and \(\alpha+t\mathbf 1\) have zero sum, then \(nt=0\) in the circle group
\(\frak I\), whose \(n\)-torsion subgroup has cardinality \(n\). Hence the same uniform bound holds after increasing \(B_n\). \\

Therefore
\[
\widehat{\phi}_{\bar\lambda}^{(n)}(\bar\mu)
\geq
C_n e^{-D_nB_n(2+\|\bar\lambda\|+\|\bar\mu\|)}.
\]
Absorbing the factor \(e^{-2D_nB_n}\) into the constant and renaming constants,
we obtain
\[
\widehat{\phi}_{\bar\lambda}^{(n)}(\bar\mu)
\geq
C_n e^{-D_n(\|\bar\lambda\|+\|\bar\mu\|)}.
\]
Strict positivity follows immediately.
\end{proof}


\section{The general semisimple case}

The proof for \(\SL_n(F)\) contains the main analytic input. We now explain how it implies the corresponding positivity statement for a general semisimple group. The reduction is by passing to a full-rank semisimple subgroup whose almost simple factors are of type \(\rA\).

Let \(G\) be the group of \(F\)-points of a connected semisimple linear algebraic group, let \(K<G\) be a maximal compact subgroup, and let \(A<G\) be a maximal \(F\)-split torus. 
Let  \(
\widehat A_{\mathrm{ur}}
\)
for the group of the unitary spherical characters of \(A\) defined in (\ref{hat-A}). 
For \(\lambda\in \widehat A_{\mathrm{ur}}\), let \(\pi_\lambda\) denote the corresponding unitary spherical principal series of \(G\), and let \(v_\lambda\) be its normalized \(K\)-fixed vector. The associated spherical function is
\[
\phi_\lambda(g)
=
\langle \pi_\lambda(g)v_\lambda,v_\lambda\rangle .
\]

\subsection{A full-rank split semisimple subgroup of type \(\rA\)}

We first prove the existence of a full-rank subgroup that is a product
of \(\SL_n\)-groups in $G$.

\begin{lemma}\label{lem:full-rank-A-subgroup}
Let \(G\) be a connected semisimple linear group over \(F\). Let \(A<G\) be a maximal \(F\)-split torus of rank \(r\), 
and let $K<G$ be a maximal compact subgroup such that $G=KAK$ holds. Then there exists a connected split semisimple \(F\)-subgroup \[H\subset G\]
containing \(A\), of the same \(F\)-split rank \(r\), such that $K\cap H$ is a maximal compact subgroup of $H$ and 
\(H\) has a finite covering of the form
\[
\prod_i\SL_{n_i+1}(F),
\qquad\text{with} \quad
\sum_i n_i=r.
\]
\end{lemma}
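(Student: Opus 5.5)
We will build \(H\) from root subgroups of \(G\) relative to \(A\), choosing roots so that the resulting root system is a product of type \(\rA\) systems of total rank \(r\). The first step is combinatorial. Let \(\Sigma\subset\goa^*\) be the relative root system, possibly non-reduced, and let \(\Sigma_{\rm red}\) be the set of roots \(\alpha\) with \(\alpha/2\notin\Sigma\). Using the Borel--de Siebenthal procedure, we look for a closed subsystem \(\Psi\subset\Sigma\) of rank \(r\) all of whose irreducible components are of type \(\rA\). A concrete choice is to take \(\Psi\) to be generated by a maximal set of mutually strongly orthogonal long roots, which gives \(\rA_1^r\). This choice is available for every irreducible type: in \(\rB_r,\rC_r,\rD_r\) (with \(r\) even), \(\rE_7,\rE_8,\rF_4,\rG_2\) it works directly. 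For the remaining types \(\rA_r\), \(\rD_r\) with \(r\) odd, \(\rE_6\) and \(\rBC_r\), we would instead use a type-\(\rA\) subsystem of full rank, namely \(\rA_r\) itself, \(\rA_3\times\rA_1^{r-3}\) (using \(\rD_3=\rA_3\)), \(\rA_2^3\), and \(\rA_1^r\) built from the roots \(2e_i\), respectively. The subsystem must span \(\goa^*\), so that the torus it determines is all of \(A\).

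Next we realize \(\Psi\) group-theoretically. For each \(\alpha\in\Psi\), fix a nonzero \(X_\alpha\in\mathfrak g_\alpha\), with the appropriate \(F\)-rational version in the non-Archimedean case. By Jacobson--Morozov, or equivalently by choosing an \(F\)-split \(\SL_2\)-triple attached to \(\alpha\) through the Borel--Tits theory of root groups, we take \(X_{-\alpha}=-\theta X_\alpha\), where \(\theta\) is the Cartan involution. In the \(p\)-adic case we use the involution associated with the special vertex fixed by \(K\). We then let \(H\) be the group generated by these \(F\)-split rank-one subgroups. For a \(\Psi\) of type \(\rA\) built from these triples, the Lie algebra they generate together with \(\goa\) is split semisimple with root system \(\Psi\). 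This gives a homomorphism from the simply connected split group \(\prod_i\SL_{n_i+1}\) onto \(H\) with finite central kernel. Since \(\operatorname{rank}\Psi=r\) and the coroots \(H_\alpha\) span \(\goa\), the maximal split torus of \(H\) is \(A\), and in particular \(A\subset H\), up to the compact part \(A\cap K\), which lies in the centralizer and is handled by enlarging \(H\) by a finite group if necessary. Connectedness of \(H\) follows from its being generated by unipotent root groups and \(A^\circ\).

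The step about \(K\cap H\) is where \(\theta\)-stability is used. Because the triples were chosen \(\theta\)-stable, \(H\) is \(\theta\)-stable, so \(K\cap H\) is the fixed group of a Cartan involution of \(H\) in the Archimedean case, hence maximal compact in \(H\). In the \(p\)-adic case we argue through buildings. We choose the \(X_\alpha\) with valuation \(0\) relative to the special point \(x_0\) fixed by \(K\). Then \(x_0\) lies in the apartment of \(A\) in the building of \(H\) and is a special point there, so \(K\cap H\) is the corresponding special maximal compact subgroup. Alternatively we verify \(H=(K\cap H)A^+(K\cap H)\) directly from \(G=KAK\) and the Weyl group of \(\Psi\).

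We expect the main obstacle to be the non-split, and especially the non-reduced, case. There the root spaces \(\mathfrak g_\alpha\) may have dimension greater than one, the root groups are non-abelian, and one must check that an \(F\)-split \(\SL_2\) really sits inside each rank-one subgroup \(G_{\pm\alpha}\). We will attack this by using, for each \(\alpha\in\Psi\), the semisimple rank-one subgroup \(\langle U_\alpha,U_{-\alpha}\rangle\), which is quasi-split of relative rank one. We then select an \(F\)-split \(\SL_2\) or \(\mathrm{PGL}_2\) inside it, via a Jacobson--Morozov triple with \(h=H_\alpha\) when \(2\alpha\notin\Sigma\) and using \(2\alpha\) otherwise. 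The remaining work is to verify that these choices for different roots of \(\Psi\) generate a group with root system exactly \(\Psi\).
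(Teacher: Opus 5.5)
Your overall route is the paper's. You choose a full-rank closed subsystem of the relative root system whose components are of type $\rA$. You realize it by $\theta$-stable root $\mathfrak{sl}_2$'s in the Archimedean case and via Borel--Tits in the non-Archimedean case. You then deduce that $K\cap H$ is maximal compact. However, two steps fail as written, and a third is not yet an argument.

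\textbf{The combinatorial step.} You claim that a maximal family of mutually strongly orthogonal \emph{long} roots gives $\rA_1^r$. This is false in two cases.
\begin{itemize}
\item In $\rG_2$ the long roots form an $\rA_2$, in which no two roots are orthogonal, so you only get rank one.
\item In $\rB_r$ with $r$ odd (a case your list of exceptions does not cover), two orthogonal long roots $\pm e_i\pm e_j$ involve either the same pair of coordinates or disjoint pairs. Hence at most $r-1$ of them are mutually orthogonal.
\end{itemize}
The repair is easy. The paper takes the six long roots ($\rA_2$) in $\rG_2$, and $(r-3)\rA_1+\rA_3$ in $\rB_r$ for $r$ odd, with $\rA_3=\rD_3$ on three coordinates. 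Alternatively, admit one short root: $e_r$ in $\rB_r$, or a short root orthogonal to the highest root in $\rG_2$. But the uniform claim is wrong as stated. Your choice $3\rA_2\subset\rE_6$ is a legitimate substitute for the paper's $\rA_1+\rA_5$.

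\textbf{The realization step.} You fix a root vector $X_\alpha$ for \emph{every} $\alpha\in\Psi$. This fails when root spaces have dimension $>1$ and $\Psi$ has a component of rank $\ge 2$. An example is $\SL_{r+1}(\mathbb H)$, where $\Psi=\Sigma$ is of type $\rA_r$ and all root spaces are four-dimensional. There $[X_\alpha,X_\beta]$ need not be proportional to the independently chosen $X_{\alpha+\beta}$. The generated algebra then has root multiplicities $>1$ and is not split with root system $\Psi$.

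The paper chooses root vectors only for a base of $\Phi'$. What makes this work is the closedness of $\Psi$, which you state but never use, and it settles the verification you leave as ``remaining work'':
\begin{itemize}
\item For distinct simple roots $\alpha,\beta$ of $\Psi$, we have $-\alpha,\beta\in\Psi$, and a difference of distinct simple roots is never in $\Psi$. Closedness therefore forces $\beta-\alpha\notin\Sigma\cup\{0\}$, hence $[\theta X_\alpha,X_\beta]=0$.
\item Since $[X_\alpha,\theta X_\alpha]\in\goa$ is a nonzero multiple of the coroot, $\mathfrak{sl}_2$-theory yields the Serre relations.
\item Serre's theorem then shows the generated algebra is split with root system $\Psi$.
\end{itemize}

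\textbf{The non-Archimedean case.} Here there is in general no involution of $G$ whose fixed points are $K$. So maximal compactness of $K\cap H$ cannot come from $\theta$-stability. It must come from your building argument, or from the Borel--Tits construction the paper cites. Your alternative of reading off $H=(K\cap H)A^+(K\cap H)$ from $G=KAK$ does not work. The $K$-factors in the Cartan decomposition of an element of $H$ need not lie in $H$.
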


\begin{proof}
Let
\[
\Phi=\Phi(G,A)
\]
be the relative root system. We first need a full rank closed sub-root subsystem
\[
\Phi'\subset \Phi
\]
whose irreducible components are all of type \(\rA\).
Clearly it suffices to 
construct $\Phi'$
for  $\Phi$
being irreducible.
Here
by 
a closed sub-root system 
$\Phi$ we mean
that for any two roots $\alpha,\beta\in\Phi'$, if $\alpha+\beta\in\Phi$, then $\alpha+\beta\in\Phi'$.
When $\Phi$ is irreducible, we take certain roots in $\Phi$ generating a closed sub-root system $\Phi'$ as follows
according to the types of $\Phi$ \cite[Chapt. X, Section 3]{He01}
\begin{itemize} 
\item $\rBC_{n}$,
 $\Phi=\{\pm e_i\pm e_j,
1\le i\ne j \le n \}
 \cup
 \{\pm 2e_i, \pm e_i,
1\le i\le n\}$
or type $\rC_n$
with $\pm e_i$ missing,
 $$
 \Phi'=\{\pm 2e_i, 1\le i\le n\}
 \cong n\rA_{1}$$ 
consists of strongly orthogonal
long roots. 
\item  $\rB_{2n}$,
 $\Phi=\{\pm e_i\pm e_j,
1\le i\ne j \le 2n \}
 \cup
 \{\pm e_{i}\},
1\le i\le n\}$,
 $$
 \Phi'=\{\pm e_{2i-1}\pm e_{2i}\}
 \cong 2n\rA_{1}.$$ 
\item $\rB_{2n+1}$, 
 $\Phi=\{\pm e_i\pm e_j,
1\le i\ne j \le 2n+1 \}
 \cup
 \{\pm e_i,
1\le i\le 2n\}$,
 $$
 \begin{aligned}
 \Phi'& =\{\pm e_{2i-1}\pm e_{2i}, 
 1\le i \le n-1\}
 \cup
 \{\pm e_i\pm e_{j},
2n-1\le i <j\le 2n+1\}\\
 & \cong (2n-2)\rA_{1}+\rA_{3}.
 \end{aligned}
 $$
\item $\rD_{2n+2}$ ($n\geq 1$), 
 $\Phi=\{\pm e_i\pm e_j,
1\le i\ne j \le 2n+2 \}$,
$$
\Phi'=
\{\pm (e_{2i-1}\pm e_{2i}),
1\le i \le n+1\}
\cong(2n+2)\rA_{1}.$$ 
\item  $\rD_{2n+3}$ ($n\geq 1$), 
 $\Phi=\{\pm e_i\pm e_j,
1\le i\ne j \le 2n+3 \}$,
\[
\begin{aligned}
\Phi'&=
\{\pm (e_{2i-1}\pm e_{2i},
1\le i \le n\}
\cup
\{\pm e_i\pm e_{j},
2n+1\le i <j\le 2n+3\}\\
&\cong 2n\rA_{1}+\rA_{3}.
\end{aligned}
\]
\item $\rE_{6}$, 
$\Phi$ here and below
being described in 
\cite[Chapt. X, Section 3.3]{He01}, and
$$
\Phi'\cong\rA_{1}+\rA_{5},$$
where $A_1$ corresponds 
to the highest root $\beta$ and its negative, 
$A_5$ consists of other roots orthogonal to $\beta$.  
\item  $\rE_{7}$, $\Phi'$ consists of 7 orthogonal roots and their negatives, e.g., 
$$\Phi'
=\{\pm (e_{2i-1}\pm e_{2i}), 1\le i \le 3\}
\cup
\{\pm (e_{7} -e_{8})\}
\cong 7\rA_{1}.$$
\item $\rE_{8}$, $\Phi'$ consists of 8 orthogonal roots and their negatives, e.g., $$\Phi'=
\{\pm (e_{2i-1}\pm e_{2i}), 1\le i \le 4\}
\cong 8\rA_{1}.
$$
\item $\rF_{4}$, $\Phi'$ consists of 4 orthogonal long roots and their negatives, e.g., 
$$\Phi'=\{\pm (e_1\pm e_2),\pm (e_3\pm e_4)\}\cong 4\rA_{1}.$$
\item $\rG_{2}$, 
 $\Phi'\cong\rA_{2}$ consists of the $6$ long roots. 
\end{itemize} 

We now realize \(\Phi'\) by a connected split semisimple \(F\)-subgroup $H$ of \(G\) containing $A$ and such that $K\cap H$ is a maximal compact subgroup of $H$. When $F=\mathbb{R}$, let $\theta$ the Cartan involution of $G$ such that $K=G^{\theta}$. We choose a set of root vectors 
$\{X_{\alpha}\}$ for simple roots in $\Phi'$. For each simple root $\alpha$, put $X_{-\alpha}=\theta(X_{\alpha})$ and let $\mathfrak{h}$ be the Lie subalgebra of $\frak g$ generated by $\{X_{\alpha},X_{-\alpha}\}$ and $\frak a$. Let $H$ be a connected closed subgroup with Lie algebra $\mathfrak{h}$. 
The root system of \(H\) with respect to \(A\) is \(\Phi'\). 
Then $K\cap H$ is a maximal compact subgroup of $H$ by definition. When $F$ is a nonarchimedean local field, the subgroup $H$ with required properties is constructed in \cite[\S 7]{BT65}, especially Theorem 7.2.  

Since the
irreducible components of \(\Phi'\) are all of type \(\rA\), 
the simply
connected (in the sense of algebraic groups) cover  of the derived group of \(H\) is a product 
\[
\prod_i\SL_{n_i+1}(F).
\]
The equality
\[
\sum_i n_i=r
\]
follows because \(\Phi'\) has rank \(r\).
\end{proof}


\subsection{Restriction to the full-rank subgroup}

Let \(H\subset G\) be the full-rank split semisimple subgroup constructed in
Lemma~\ref{lem:full-rank-A-subgroup}.
With this choice, the normalized \(K\)-fixed vector \(v_\lambda\) in the
spherical principal series \(\pi_\lambda\) of \(G\) is also \(K_H\)-fixed. We
consider the restriction of \(\pi_\lambda\) to \(H\), and more specifically the
cyclic \(H\)-representation generated by \(v_\lambda\).

By Proposition \ref{P:tempered-res}, the restriction of the tempered representation \(\pi_\lambda\) to \(H\) decomposes over the tempered dual of \(H\). Moreover, because \(v_\lambda\) is \(K_H\)-fixed, by Proposition \ref{P:tempered-sphe} the cyclic \(H\)-representation generated by \(v_\lambda\) is supported on the \(K_H\)-spherical tempered spectrum of \(H\).

Since \(H\) has full split rank and uses the same split torus \(A\), its
spherical unitary parameters are naturally identified with the same space
\(\widehat A_{\mathrm{ur}}\), but with Weyl group \(W_H\). Thus there is a positive
finite measure \(d\sigma_\lambda\) on
\[
\widehat A_{\mathrm{ur}}/W_H,
\]
where
\[
W_H=N_H(A)/Z_H(A),
\]
such that, for every \(a\in A\),
\begin{equation}
    \label{superpos}
\phi_\lambda(a)
=
\int_{\widehat A_{\mathrm{ur}}/W_H}
\phi_{\lambda'}^H(a)\,d\sigma_\lambda(\lambda'),
\quad a\in A.
\end{equation}
Here \(\phi_{\lambda'}^H\) denotes the normalized spherical function of \(H\)
with spectral parameter \(\lambda'\). The measure \(d\sigma_\lambda\) is
non-zero; in fact,
\[
\sigma_\lambda(\widehat A_{\mathrm{ur}}/W_H)
=
\|v_\lambda\|^2
=
1.
\]
In the theorem below, \(A\) denotes the split torus direction attached to the
maximal \(F\)-split torus, as in the preliminaries.

\begin{theorem}\label{thm:general-semisimple-positivity}
Let \(G\) be a connected semisimple linear group over \(F\), and let
\(\lambda\in\widehat A_{\mathrm{ur}}\). Then for every
parameter \(\mu\in\widehat A_{\mathrm{ur}}\),
\[
\int_A \phi_\lambda(a)\chi_{-\mu}(a)\,da>0.
\]
\end{theorem}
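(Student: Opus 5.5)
The plan is to feed the superposition \eqref{superpos} into the $\SL_n$ results. Each spherical function $\phi^H_{\lambda'}$ of $H$, restricted to $A$, has a strictly positive Fourier transform with an explicit lower bound. Integrating these positive transforms against the probability measure $\sigma_\lambda$ should then give positivity for $G$. The work lies in making the Fourier transform commute with the $\sigma_\lambda$-integral.

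\textbf{Step 1: reduce $H$ to a product of $\SL_n$'s.} Let $\widetilde H=\prod_i \SL_{n_i+1}(F)\to H$ be the finite covering of Lemma~\ref{lem:full-rank-A-subgroup}. Let $\widetilde A=\prod_i A'_{n_i+1}$ be the diagonal torus upstairs, and note that its split direction maps onto the split direction $\mathsf A$ of $A$ with finite kernel and finite cokernel. Pulled back to $\widetilde H$, a spherical function $\phi^H_{\lambda'}$ is a spherical function of $\widetilde H$. On the product it factors as $\prod_i \phi^{(n_i+1)}_{\bar\lambda'_i}$. Its restriction to $\widetilde A$ therefore has Fourier transform
\[
\prod_i \widehat{\phi}^{(n_i+1)}_{\bar\lambda'_i}(\bar\mu_i).
\]
By Corollary~\ref{cor:SLn-lower-bound} this product is at least $C e^{-D(\|\lambda'\|+\|\mu\|)}$. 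To descend to $A$, I would write the Fourier transform on $\mathsf A$ at $\mu$ as a finite sum over the lifts of $\mu$ to the dual of $\widetilde{\mathsf A}$, taken up to a positive covolume constant. Each term of this sum is non-negative by Theorem~\ref{thm:SLn-formula}, so a single lift already supplies the lower bound.

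\textbf{Step 2: integrate over $\sigma_\lambda$.} I would integrate \eqref{superpos} against $\chi_{-\mu}$ over $A$ and exchange the integrals, obtaining
\[
\widehat{\phi_\lambda}(\mu)=\int \widehat{\phi^H_{\lambda'}}(\mu)\,d\sigma_\lambda(\lambda').
\]
Since $\sigma_\lambda$ is a probability measure, positivity follows at once if the exchange is legitimate. Indeed, choose $R$ with $\sigma_\lambda(\{\|\lambda'\|\le R\})>0$. On that set the integrand is at least $Ce^{-D(R+\|\mu\|)}$, and elsewhere it is non-negative.

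\textbf{Step 3: justify the exchange (main obstacle).} Fubini needs $\int_A\int |\phi^H_{\lambda'}(a)|\,d\sigma_\lambda\,da<\infty$. Uniform $L^1$ control of $\phi^H_{\lambda'}|_A$ in $\lambda'$ is not available, so I would avoid it by a distributional argument.

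First, pair \eqref{superpos} with a Schwartz (or compactly supported) test function $\beta$ on $A$ whose Fourier transform $\widehat\beta\ge0$ is concentrated near $\mu$. Fubini then holds trivially because $|\phi^H_{\lambda'}|\le1$. This gives
\[
\int\widehat{\phi_\lambda}\,\widehat\beta=\int\Big(\int \widehat{\phi^H_{\lambda'}}\,\widehat\beta\Big)d\sigma_\lambda .
\]
The right-hand side uses that $\phi^H_{\lambda'}|_A$ is itself an $L^1$ function with non-negative transform, by Theorem~\ref{thm:SLn-formula} and Lemma~\ref{lem:est}.

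Second, the left-hand side is continuous in $\mu$, since $\phi_\lambda|_A\in L^1$. Letting $\widehat\beta$ approximate $\delta_\mu$ and applying Fatou to the non-negative right-hand side gives
\[
\widehat{\phi_\lambda}(\mu)\ \ge\ \int \liminf\cdots\ \ge\ \sigma_\lambda(\{\|\lambda'\|\le R\})\,Ce^{-D(R+\|\mu\|)}>0.
\]
This uses that the lower bound of Corollary~\ref{cor:SLn-lower-bound} is locally uniform in $\mu$.
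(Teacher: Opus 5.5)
There is a genuine gap in your Step~1. The descent from $\widetilde{\mathsf A}$ to $\mathsf A$ runs in the wrong direction exactly when the cokernel you allow is non-trivial.

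Over non-Archimedean $F$, the map $p:\widetilde{\mathsf A}\to\mathsf A$ is injective, and its image is a sublattice $L'$ of finite index $d$. Characters then go the other way. Restriction $\mu\mapsto\mu\circ p$ maps $\widehat{\mathsf A}_{\mathrm{ur}}$ onto the dual of $\widetilde{\mathsf A}$ with kernel $L'^{\perp}$ of order $d$. So $\mu$ has a single image, not several lifts. For $f=\phi^H_{\lambda'}|_{\mathsf A}$, Poisson summation reads
\[
\widehat{f\circ p}\,(\mu\circ p)=\frac1d\sum_{\mu'\in\mu+L'^{\perp}}\widehat f(\mu').
\]
Thus the $\SL_n$ lower bound controls only the average of $\widehat f$ over the coset $\mu+L'^{\perp}$, since the covering group never sees $f$ off $L'$. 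Positivity of $\widehat f(\mu)$ itself does not follow. The formula you wrote is the one that holds when $\mathsf A$ is the \emph{smaller} lattice and $f$ is a restriction from the larger one.

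The paper sidesteps this by asserting that $A'\to A$ is an isomorphism. Over Archimedean $F$ that is correct, because one is dealing with identity components of vector groups. In that case your sum has one term and your Step~1 coincides with the paper's. Over non-Archimedean $F$, however, the cokernel you anticipated does occur: for $\SL_2(F)\to\operatorname{PGL}_2(F)$ the valuation on the split torus doubles. So neither shortcut is available there.

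To make the Poisson relation run in your direction, compare instead with a group whose cocharacter lattice contains $X_*(A)$. An example is the adjoint group $\prod_i\operatorname{PGL}_{n_i+1}(F)$. For it, Theorem~\ref{thm:Fourier-GLn} with $[\lambda]=0$ gives the transform on $A_n$ modulo the centre directly as $c(\lambda,\mu)$. You must first identify $\phi^H_{\lambda'}|_A$ as the restriction of an adjoint-group spherical function, for instance via Macdonald's formula. After that, every term in the finite sum over extensions of $\mu$ is positive.

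Step~3 rests on a false premise, though it is not a gap. Uniform $L^1$ control is in fact available: for unitary $\lambda'$ one has $|\phi^H_{\lambda'}|\le\Xi_H$. Moreover, $\Xi_H|_A\in L^1(A)$ by the Harish--Chandra bound $(1+\|H_A(a)\|)^Ne^{-\rho_H(H_A(a))}$ on the chamber. This is exactly how the paper justifies Fubini, and it obtains an exact identity rather than an inequality.

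Your test-function and Fatou argument is nonetheless correct. Parseval is legitimate because the restrictions are integrable positive definite functions and therefore have integrable, non-negative transforms. The argument delivers the inequality you need. It is simply a longer detour than necessary.
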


\begin{proof}
Let \(H\subset G\) be as in Lemma~\ref{lem:full-rank-A-subgroup}. We shall use (\ref{superpos}) and the Fubini's theorem to find the Fourier transform.
We first justify the use of Fubini's theorem. For unitary spherical parameters,
\[
|\phi_{\lambda'}^H(a)|\leq \Xi_H(a),
\]
where \(\Xi_H=\phi_0^H\) is Harish--Chandra's \(\Xi\)-function for \(H\).
The standard Harish--Chandra estimate (\cite[Theorem 4.5.3]{W1} and \cite[Lemma II.1.1]{W03}) gives, on the positive chamber, 
\[
\Xi_H(a)
\ll
(1+\|H_A(a)\|)^N e^{-\rho_H(H_A(a))}
\]
for some positive integer \(N\). Hence \(\Xi_H|_A\in L^1(A)\). 
The exponential decay \(e^{-\rho_H(H_A(a))}\) dominates the polynomial factor. Since
\(d\sigma_\lambda\) is a finite measure, it follows that
\[
\int_A
\int_{\widehat A_{\mathrm{ur}}/W_H}
|\phi_{\lambda'}^H(a)|\,d\sigma_\lambda(\lambda')\,da
<\infty.
\]
Hence Fubini's theorem gives 
\[
\begin{aligned}
\int_A \phi_\lambda(a)\chi_{-\mu}(a)\,da
&=
\int_{\widehat A_{\mathrm{ur}}/W_H}
\left(
\int_A \phi_{\lambda'}^H(a)\chi_{-\mu}(a)\,da
\right)
d\sigma_\lambda(\lambda').
\end{aligned}
\]
By Lemma~\ref{lem:full-rank-A-subgroup}, the group \(H\) admits a finite covering $H'$ by a product of groups of the form
\[
\prod_i \SL_{n_i+1}(F).
\] 
Under this covering, the split torus \(A\) is covered by a  product $A'$ of the standard diagonal split tori of factors of $H'$. Write $\lambda''$ and $\mu'$ for the pullbacks of $\lambda'$ and $\mu$ respectively to unitary characters of $A'$. Corollary 
\ref{cor:SLn-lower-bound} above for \(\SL_n\) gives
\[\int_{A'} \phi_{\lambda''}^{H'}(a)\chi_{-\mu'}(a)\,da>0.\]
As the projection $A'\rightarrow A$ is an isomorphism and 
$\phi_{\lambda''}^{H'}$ (resp. $\chi_{-\mu'}$) is the pullback of $\phi_{\lambda'}^{H}$ (resp. $\chi_{-\mu}$), the above integral is equal to $\int_A \phi_{\lambda'}^H(a)\chi_{-\mu}(a)\,da>0.$ Then it follows 
that \[\int_A \phi_{\lambda'}^H(a)\chi_{-\mu}(a)\,da>0\]
for every \(\lambda'\in\widehat A_{\mathrm{ur}}/W_H\) and 
every unitary character $\mu$.  

Since \(d\sigma_\lambda\) is a non-zero positive measure, the integral of these
strictly positive quantities is strictly positive. This proves the theorem.
\end{proof}

\begin{remark}
The argument proves positivity for a general semisimple group by comparison with a full-rank subgroup whose almost simple factors are of type \(\rA\). It is
therefore qualitative rather than explicit. It
does not give an explicit formula for
\[
\int_A \phi_\lambda(a)\chi_{-\mu}(a)\,da
\]
in terms of the root data of \(G\). Such a formula would require a direct
calculation of the spherical periods for the pair \((G,H)\), together with a
compatible Plancherel formula.
\end{remark}


\end{document}